\title[Extremal unipotent representations...]{Extremal unipotent representations for the finite Howe correspondence}
\author{Jesua Epequin Chavez}
\thanks{The author thanks the Academy of Mathematics and System Science, the Institut de Math\'ematiques de Jussieu and Fondecyt (subvenci\'on Nro. 220 - 2014 - Fondecyt) for support during the preparation of this work.}
\DeclareMathOperator{\End}{End}
\DeclareMathOperator{\U}{U}
\DeclareMathOperator{\Or}{O}
\DeclareMathOperator{\GL}{GL}
\DeclareMathOperator{\Sp}{Sp}
\DeclareMathOperator{\Irr}{Irr}
\DeclareMathOperator{\Ind}{Ind}
\DeclareMathOperator{\SO}{SO}
\DeclareMathOperator{\sgn}{sgn}
\DeclareMathOperator{\Tr}{Tr}
\DeclareSymbolFont{symbols2}{LS1}{stixfrak} {m} {n}
\DeclareMathSymbol{\operp}{\mathbin}{symbols2}{"A8}
\newtheorem{thm}{Theorem}
\newtheorem{defi}{Definition}
\newtheorem{rmk}{Remark}
\newtheorem{lem}{Lemma}
\newtheorem{prop}{Proposition}
\newtheorem*{thm*}{Theorem}
\begin{document}
\maketitle
\section*{Introduction}
Let $\mathbb{F}_q$ be a finite field with $q$ elements and odd characteristic. Denote the symplectic group $\Sp_{2n}(\mathbb{F}_q)$ by $\Sp_{2n}(q)$. A pair of reductive subgroups of $\Sp_{2n}(q)$, where each one is the centralizer of the other, is called \emph{reductive dual pair}. We study \emph{irreducible} dual pairs (cf. \cite{Kudla}), because these are the building blocks of all the others. One such pair $(G_m,G'_{m'})$, in $\Sp_{2n}(q)$, can be either symplectic-orthogonal $(\Sp_{2m}(q),\Or_{m'}(q))$, unitary $(\U_m(q),\U_{m'}(q))$, or linear $(\GL_m(q),\GL_{m'}(q))$ with $n=mm'$ in all cases.

Roger Howe introduced in \cite{Howe} a correspondence  $\Theta_{m,m'}:\mathscr{R}(G_m)\rightarrow\mathscr{R}(G'_{m'})$ between the categories of complex representations of these subgroups. It is obtained from a particular representation $\omega$ of $\Sp_{2n}(q)$, called the \emph{Weil representation}.

For a fixed irreducible \emph{unipotent} representation $\pi$ of $G_m$ (cf. \cite{Lusztig2}), our main goal is to find certain extremal (i.e.\ minimal and maximal) representations in the set of irreducible components of $\Theta_{m,m'}(\pi)$, for unitary and symplectic-orthogonal pairs. The definition of extremal representation is canonical for unitary pairs, for symplectic-orthogonal pairs it is defined by means of the \emph{Springer correspondence}. Our results generalize those found by Aubert, Kra\'skiewicz, and Przebinda in \cite{AKP}. 
 
In \cite{Gerardin}, G\'erardin introduced Weil representations $\omega^\flat$ of linear, symplectic, and unitary groups over finite fields, this representation agrees with $\omega$ when restricted to symplectic-orthogonal pairs; for unitary pairs, their restrictions differ by a character with values in $\{\pm 1\}$. Therefore, we can replace the study of $\Theta_{m,m'}$ by the study of the correspondence $\Theta^\flat_{m,m'}:\mathscr{R}(G_m)\rightarrow\mathscr{R}(G'_{m'})$, induced by $\omega^\flat$.

An important fact about the Howe correspondence $\Theta^\flat_{m,m'}$ is that it respects irreducible cuspidal unipotent representations in the case of \emph{first occurrence} (see Theorem \ref{SectionCuspidalUnipotent}). The study of these representations is crucial because unipotent representations of finite groups of Lie type $G$ belong to Harish-Chandra series $\Irr(G,M,\delta)$, where $\delta$ is a cuspidal unipotent representation of the Levi subgroup $M$.

For a classical group $G_m$, these series are $\Irr(G_m,G_l\times T,\lambda\otimes 1)$, where $\lambda$ is a cuspidal unipotent representation of $G_l$ ($l<m$), and $T$ is the torus of diagonal matrices (of dimension $(m-l)/2$). In \cite{AMR}, Aubert, Michel, and Rouquier showed that $\Theta^\flat_{m,m'}$ maps this series into the set $\mathscr{R}(G'_{m'}, G'_{l'}\times T',\lambda'\otimes 1)$ of representations spanned by $\Irr(G'_{m'},G'_{l'}\times T',\lambda'\otimes 1)$, where $\lambda'$ is the first occurrence of $\lambda$. 

Due to a result by Howlett and Lehrer \cite{HL}, for type I dual pairs, the Howe correspondence between these Harish-Chandra series can be seen as a correspondence between pairs $(W_r,W_{r'})$ of type $\textbf{B}$ Weyl groups. In \cite{AMR}, Aubert, Michel, and Rouquier found explicit representations $\Omega_{r,r'}$ of $W_r\times W_{r'}$, that yield the Howe correspondence between these pairs of Weyl groups for unitary pairs, and made a conjecture for symplectic-orthogonal pairs. This conjecture was recently proved by Pan in \cite{Pan3} (see Section \ref{sec:Howe-Weyl}).

Irreducible representations of the Weyl group $W_n$ are known to be parametrized by bipartitions of $n$. Let $\chi_{\xi',\eta'}$ denote the representation of $W_{r'}$ corresponding to $(\xi',\eta')$, and $\Theta(\xi',\eta')$ denote the set of bipartitions $(\xi,\eta)$ of $r$, such that $\chi_{\xi,\eta}\otimes\chi_{\xi',\eta'}$ appears in $\Omega_{r,r'}$. We can (in most cases) introduce an order on this set. In Theorems \ref{springer_ordre_so1} to \ref{order_phantom_characters_u2} we establish the following.
 
\begin{thm*}
Extremal representations exist and are unique in $\Theta(\xi',\eta')$. 
\end{thm*}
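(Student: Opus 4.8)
The plan is to derive the statement from explicit combinatorial descriptions of (i) the set $\Theta(\xi',\eta')$ and (ii) the order on bipartitions, treating the unitary and the symplectic--orthogonal pairs separately and, within each, the two subcases ($r\ge r'$ and $r<r'$) that occur in Theorems~\ref{springer_ordre_so1}--\ref{order_phantom_characters_u2}. First I would record, from the work of Aubert--Michel--Rouquier and its completion by Pan, the shape of $\Omega_{r,r'}$: every multiplicity $\langle\chi_{\xi,\eta}\otimes\chi_{\xi',\eta'},\Omega_{r,r'}\rangle$ is $0$ or $1$, and it equals $1$ exactly when the Lusztig symbol of $(\xi,\eta)$ and that of $(\xi',\eta')$, suitably normalised, satisfy an interlacing condition on their entries; equivalently, $(\xi,\eta)$ is produced from $(\xi',\eta')$ by a Pieri-type rule on the two components subject to a single linear constraint on the sizes, determined by $|r-r'|$ and, for symplectic--orthogonal pairs, by the Witt type.

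Next I would fix the order. For unitary pairs this is the canonical order on bipartitions of $r$ used in the cited theorems; for symplectic--orthogonal pairs it is the order induced by the Springer correspondence, i.e.\ $(\xi_1,\eta_1)\le(\xi_2,\eta_2)$ iff the unipotent class attached to $\chi_{\xi_1,\eta_1}$ lies in the closure of the one attached to $\chi_{\xi_2,\eta_2}$. The computational device in the latter case is to transport everything to symbols: the closure order on classical unipotent classes is the dominance order on partitions, which in turn becomes a dominance-type order on symbols, so the order on $\Theta(\xi',\eta')$ can be read off directly from the interlacing description.

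The core of the argument is then to exhibit the extremal elements explicitly by pushing all the freedom in the interlacing to one end: $(\xi,\eta)_{\max}$ is obtained by placing each box that may be added (or kept) in the position that most increases the relevant partition for the dominance/Springer order, namely the highest available rows, and $(\xi,\eta)_{\min}$ by the opposite choice. One checks that these two candidates still satisfy every constraint defining $\Theta(\xi',\eta')$, hence belong to it, and then that an arbitrary member is obtained from $(\xi,\eta)_{\max}$ by a chain of order-decreasing elementary moves and from $(\xi,\eta)_{\min}$ by a chain of order-increasing ones. This shows that $\Theta(\xi',\eta')$ actually has a greatest and a least element --- equivalently, that it is an interval for the relevant order --- which yields both existence and uniqueness of the extremal representations. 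In the unitary case this is a short dominance-order computation; in the symplectic--orthogonal case the same computation is performed on symbols and then carried through the Springer correspondence.

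I expect the main obstacle to be the symplectic--orthogonal case, and more precisely the degenerate symbols (those fixed by transposition, corresponding to very even partitions) together with the two-component issue for even orthogonal groups: there the Springer map is not injective and the induced order on $\Theta(\xi',\eta')$ fails to be total, which is exactly the ``in most cases'' restriction mentioned in the introduction. In those situations I would argue separately --- either by showing that the two candidates remain comparable to every member for the partial order that does survive, or by characterising the extremal elements intrinsically as the first and last occurrences in the Howe correspondence (Theorem~\ref{SectionCuspidalUnipotent}) and checking that the combinatorial construction reproduces them --- and the delicate bookkeeping needed to keep the ``greatest/least'' conclusion intact across all these degenerate configurations is where the real work lies.
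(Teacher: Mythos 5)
Your broad outline --- extract an explicit Pieri-type description of $\Theta(\xi',\eta')$ from the Aubert--Michel--Rouquier/Pan formulas, write down the two candidates obtained by pushing all the added boxes to one extreme, and then compare via symbols and dominance --- is indeed the skeleton of the paper's argument (equations (\ref{so1bis}), (\ref{so2bis}) and Lemmas \ref{small_so1}, \ref{so2set}, \ref{smallest_lexicographic_order_u1}, \ref{smallest_lexicographic_order_u2}). But the central step of your plan is assumed rather than proved: you take for granted that the dominance-type order you can ``read off directly from the interlacing description'' is compatible with the order the theorem actually uses, namely the closure/Springer order on $\lambda(\xi,\eta)$ for symplectic--orthogonal pairs and the dominance order on the partitions $\mu_k(\xi,\eta)$ with fixed $2$-core $\tau_k$ (not the natural order on bipartitions of $r$) for unitary pairs. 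That compatibility holds only in two of the four cases (Proposition \ref{prop_springer_ordre_so1} and Proposition \ref{prop_order_phantom_characters_u2}); in the second symplectic--orthogonal case the map $\lambda$ is explicitly \emph{not} increasing on $\Theta(\xi',\eta')$, and likewise no monotonicity is available in the first unitary case, so your chain-of-elementary-moves argument, which is monotone only for the natural order on bipartitions, does not transfer to the order that matters. The paper replaces it there by a direct estimate on distinguished $u$-symbols and $\beta$-sets (Lemma \ref{TechnicalLemmaSOandUpairs}, used in Theorems \ref{springer_ordre_so2} and \ref{order_phantom_characters_u1}), and this estimate uses the stable range hypothesis $m\ge 2m'$ (hence $l\ge 2l'$) in an essential way --- for instance to know that $l-l'$ is the largest entry and to identify the leading term of the distinguished symbol --- a hypothesis that never appears in your proposal.

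Uniqueness is a second genuine gap. Since the map $(\xi,\eta)\mapsto\lambda(\xi,\eta)$ is not injective, knowing that the candidate's symplectic partition is $\le$ every $\lambda(\xi,\eta)$ does not yet single out one bipartition; the paper must show in Theorem \ref{springer_ordre_so2} that no other $u$-symbol in the similarity class of the candidate's symbol comes from a member of $\Theta(\xi',\eta')$. Your fallbacks --- comparability under the surviving partial order, or characterising the extremal elements as first/last occurrences --- do not supply this: first occurrence governs cuspidal supports across Witt towers, not positions inside a fixed Harish-Chandra series. Finally, the degeneracies you single out (very even partitions, the two-component issue for even orthogonal groups) are not where the difficulty lies here, since the Springer correspondence employed is that of $\Sp_{2n}(\overline{\mathbb{F}}_q)$ with non-injectivity already handled by distinguished symbols and similarity classes; the real work is the non-monotone cases and the stable-range-dependent symbol and $\beta$-set computations described above, which your proposal leaves unaddressed.
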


In a subsequent paper we will generalize this result to arbitrary irreducible representations.

\section{Howe Correspondence}\label{HoweCorrespondence}
\subsection{Dual pairs}\label{DualPairs}
Let $W$ be a symplectic vector space over $\mathbb{F}_q$, and $\Sp(W)$ its \emph{symplectic group}. By choosing a suitable base we can identify this group to a group of matrices, in this situation we denote it by $\Sp_{2n}(q)$, where $\dim W=2n$. The centralizer in $G$ of a subgroup $H$ will be denoted by $C_G(H)$.
\begin{defi}
A \emph{reductive dual pair} $(G,G')$ in $\Sp(W)$ is a pair of reductive subgroups $G$ and $G'$ of $\Sp(W)$ such that 
$$
C_{\Sp(W)}(G)=G',\hspace{5pt} \mbox{and} \hspace{5pt} C_{\Sp(W)}(G')=G.
$$
We will usually omit the word reductive and call $(G,G')$ a dual pair.
\end{defi}
If $W = W_1\operp W_2$ is an orthogonal sum decomposition, and if $(G_1,G_1')$ and $(G_2,G_2')$ are dual pairs in $\Sp(W_1)$ and $\Sp(W_2)$ respectively, then $(G, G') = (G_1\times G_2, G_1'\times G_2')$ is a dual pair in $\Sp(W)$. A dual pair $(G, G')$ which does not arise in this way is said to be \emph{irreducible}. Every dual pair can be written as a product of irreducible dual pairs. These, in turn, are one of the following. 
 
(1) Let $V_1$ and $V_2$ be vector spaces over $\mathbb{F}_q$. Suppose $V_1$ has a symplectic form  $\langle\hspace{3pt},\hspace{2pt}\rangle_1$, and $V_2$ has a quadratic form $\langle\hspace{3pt},\hspace{2pt}\rangle_2$.  

The $\mathbb{F}_q$-vector space $W=V_1\otimes_{\mathbb{F}_q} V_2$ has a symplectic form defined by
$$
\langle u_1\otimes u_2, v_1\otimes v_2\rangle=\langle u_1,v_1 \rangle_1\langle u_2,v_2 \rangle_2.
$$
We can see $\Sp(V_1)$ and $\Or(V_2)$ as subgroups of $\Sp(W)$ via the natural map $\Sp(V_1)\times \Or(V_2) \rightarrow \Sp(W)$. The irreducible pair $(\Sp(V_1),\Or(V_2))$ so obtained is called \emph{symplectic-orthogonal}.

(2) Consider the quadratic extension $\mathbb{F}_{q^2}$ of $\mathbb{F}_q$ and let $F$ denote its Frobenius morphism. Let $V_1$ (resp. $V_2$) be a vector space over $\mathbb{F}_{q^2}$ with a non-degenerate skew-Hermitian (resp. Hermitian) form $\langle\hspace{3pt},\hspace{2pt}\rangle_1$ (resp. $\langle\hspace{3pt},\hspace{2pt}\rangle_2$), and let $\U(V_1)$ (resp. $\U(V_2)$) be the corresponding unitary group.

The $\mathbb{F}_{q^2}$-vector space $V=V_1\otimes_{\mathbb{F}_{q^2}} V_2$ can be equipped with the skew-hermitian form
$$
\langle u_1\otimes u_2, v_1\otimes v_2\rangle=\langle u_1,v_1 \rangle_1{}^F\langle u_2,v_2 \rangle_2.
$$
Via the natural map $\U(V_1)\times \U(V_2) \rightarrow \U(V)$, we can see $\U(V_1)$ and $\U(V_2)$ as subgroups of $\U(V)$. 

Denote by $W$ the $\mathbb{F}_q$-vector space underlying $V$. Composing the previous form with $\Tr_{\mathbb{F}_{q^2}/\mathbb{F}_q}$ yields a symplectic form on $W$. Moreover, the unitary group $U(V)$ is embedded in $\Sp(W)$. The irreducible dual pair  $(\U(V_1),\U(V_2)$ of subgroups of $\Sp(W)$ is called \emph{unitary}.

Unitary and symplectic-orthogonal pairs are said to be of \emph{type I} dual pairs. 

(3) Let $V_1$ and $V_2$ be vector spaces over $\mathbb{F}_q$. The natural action of $\GL(V_1)\times\GL(V_2)$ on $V=V_1\otimes V_2$, induces an action on its dual $V^*$. By considering the diagonal action we get a map $\GL(V_1)\times\GL(V_2)\rightarrow \GL(W)$, where $W=V\oplus V^*$ is a symplectic vector space with form 
$$
\langle x+x^*,y+y^*\rangle=y^*(x)-x^*(y),
$$
that makes $\GL(V_1)$ and $\GL(V_2)$ subgroups of $\Sp(W)$. Irreducible dual pairs $(\GL(V_1),\GL(V_2))$ arising this way are called \emph{linear} (or of \emph{type II}).

\subsection{Weil representations}
In order to define the Howe correspondence, we must introduce the \emph{Heisenberg group}. This is the group with underlying set $H(W)= W\times\mathbb{F}_q$ and product
$$
(w,t) \cdot (w',t') = (w+w',t+t'+\frac{1}{2}\langle w,w'\rangle)
$$
Let $\rho$ be an irreducible representation of $H(W)$. Its restriction to the center $Z\simeq\mathbb{F}_q$ of $H(W)$ equals $\psi_\rho \cdot 1$, for a certain character $\psi_\rho$ of $\mathbb{F}_q$. 
\begin{thm}\cite{Mackey}\label{StoneVonNeumann}
For any non-trivial character $\psi$ of $Z$ there exists (up to equivalence) a unique irreducible representation $\rho$ of $H(W)$ such that $\psi_\rho=\psi$.
\end{thm}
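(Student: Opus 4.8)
The plan is to establish existence and uniqueness separately: existence through an explicit induced (Schr\"odinger) model of dimension $q^n$, and uniqueness by forcing any candidate with the prescribed central character to contain that model. For existence, I would fix a complete polarization $W = X \oplus Y$ into two transverse Lagrangian subspaces of dimension $n$. Since $Y$ is totally isotropic, the set $A = \{(y,t) : y \in Y,\ t \in \mathbb{F}_q\}$ is an \emph{abelian} subgroup of $H(W)$ of order $q^{n+1}$. Extending $\psi$ to a character $\tilde\psi$ of $A$ (say, trivial on $Y$) and setting $\rho = \Ind_A^{H(W)} \tilde\psi$ produces a representation of dimension $[H(W):A] = q^n$ whose restriction to $Z$ is again $\psi$.

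For irreducibility I would compute the induced character $\chi_\rho$. Using that conjugation in $H(W)$ fixes the $W$-coordinate and that $\tilde\psi$ is trivial on $Y$, a direct computation shows that $\chi_\rho$ vanishes off the center and equals $q^n\psi(t)$ on $(0,t)\in Z$; hence
\[
\langle \chi_\rho, \chi_\rho\rangle = \frac{1}{q^{2n+1}} \sum_{t \in \mathbb{F}_q} |q^n \psi(t)|^2 = 1,
\]
so $\rho$ is irreducible. (Mackey's irreducibility criterion applied to $\Ind_A^{H(W)}\tilde\psi$ gives the same conclusion.)

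For uniqueness, let $\rho'$ be any irreducible representation with $\psi_{\rho'} = \psi$. By Schur's lemma $Z$ acts on $\rho'$ by the scalar $\psi$, so every character occurring in $\Res_A \rho'$ restricts to $\psi$ on $Z$; these form a single coset of $\widehat{Y}$. The key computation is the conjugation action of $H(W)$ on this coset: conjugating $(y,t)\in A$ by $(x,0)$ with $x\in X$ gives $(y,\,t+\langle x,y\rangle)$, which twists a character by $y\mapsto\psi(\langle x,y\rangle)$. Because $\psi$ is nontrivial and the pairing $X\times Y\to\mathbb{F}_q$ induced by the symplectic form is perfect (the two Lagrangians being complementary), these twists run over all of $\widehat{Y}$, so $H(W)$ permutes the extensions of $\psi$ transitively. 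In particular $\tilde\psi$ occurs in $\Res_A\rho'$, and Frobenius reciprocity yields $\Hom_{H(W)}(\rho,\rho') = \Hom_A(\tilde\psi,\Res_A\rho') \neq 0$; as both representations are irreducible, $\rho\cong\rho'$.

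I expect the uniqueness step, and specifically this transitivity computation, to be the main obstacle: one must check carefully that conjugation realizes \emph{every} twist of a fixed extension, which is exactly where nondegeneracy of the form and nontriviality of $\psi$ enter. As a consistency check, the identity $\sum_\rho (\dim\rho)^2 = q^{2n+1}$ closes the argument independently: the $q^{2n}$ linear characters (those trivial on $Z$, factoring through the abelian quotient $H(W)/Z\cong W$) exhaust the trivial central character, leaving $q^{2n}(q-1)$ for the nontrivial ones, which is accounted for exactly by a single representation of dimension $q^n$ for each of the $q-1$ nontrivial characters $\psi$.
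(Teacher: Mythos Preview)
Your argument is correct and is essentially the standard proof of the finite Stone--von Neumann theorem: build the Schr\"odinger model by inducing from a maximal abelian subgroup attached to a Lagrangian, verify irreducibility by showing the character is supported on the center, and deduce uniqueness from the transitivity of the $X$-conjugation action on the extensions of $\psi$ to $A$ together with Frobenius reciprocity. The dimension count at the end is a clean sanity check.

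There is, however, nothing to compare against: the paper does not give its own proof of this statement. Theorem~\ref{StoneVonNeumann} is stated with a citation to Mackey and used as a black box to define the Weil representation; no argument appears in the text. So your write-up is not an alternative to the paper's proof but rather a self-contained justification of a result the paper takes for granted.
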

This representation is known as the \emph{Heisenberg representation}. It depends on $\psi$, so we denote it by $\rho_\psi$. 
 
The natural action of $\Sp(W)$ on $H(W)$ fixes the elements of its center. Hence, for a fixed character $\psi$ of $\mathbb{F}_q$, the representations $\rho_\psi$ and $x\cdot\rho_\psi$ agree on $Z$, for any $x\in \Sp(W)$. Theorem \ref{StoneVonNeumann} implies that there is an operator $\omega_\psi(x)$ verifying
$$
\rho_\psi(x\cdot w,t)=\omega_\psi(x)\rho_\psi(w,t)\omega_\psi(x)^{-1}.
$$
This defines a projective representation $\omega_\psi$ of $\Sp(W)$, which can be lifted to an actual representation of $\Sp(W)$, known as the \emph{Weil representation}.

For an irreducible dual pair $(G,G')$, pulling back this representation by $G\times G'\rightarrow\Sp(W)$, we get a representation $\omega_{G,\; G'}$ of $G\times G'$. It decomposes as :
$$
\omega_{G,\; G'}=\sum_{\pi \in \Irr(G)} \pi\otimes\Theta(\pi),
$$
where $\Theta(\pi)$ is a (not necessarily irreducible) representation of $G'$. This map $\Theta$, from the set of irreducible representations of $G$ to the set of representations of $G'$, is known as the \emph{Howe correspondence}.

For unitary dual pairs we introduce the representation 
$$
\omega_\psi^\flat = \nu_m \otimes \omega_\psi, \hspace{5pt} \mbox{ on } U(V), 
$$
where $\nu_m(u)= (\det u)^{(q+1)/2}$ defines a character of $U(V)$. This representation induces a correspondence denoted by $\Theta_\psi^\flat$.

\subsection{Witt towers}\label{WittTower}
Some nice properties of the Howe correspondence involve its compatibility with \emph{Witt towers} $\mathbf{T}=\{G_n\}_{n\in\mathbb{N}}$ :

(1) For unitary groups there are two, one whose groups are $G_n=U_{2n}(q)$, for $n\in\mathbb{N}$, and the other for groups $G_n=U_{2n+1}(q)$, for $n\in\mathbb{N}$. The first one will be denoted by $\mathbf{U}^+$, and the second one by $\mathbf{U}^-$.

(2) In the symplectic case there is only one, it is formed by groups $G_n=\Sp_{2n}(q)$, for $n\in\mathbb{N}$. It will be denoted by $\mathbf{Sp}$.

(3) Even-orthogonal groups provide two Witt towers whose groups are $G_n=\Or^+_{2n}(q)$, and $G_n=\Or^-_{2n}(q)$, for positive integers $n$. These will be denoted by $\mathbf{O}^+$, and $\mathbf{O}^-$ respectively.
\begin{defi}
The pair $(G_m,G'_{m'})$ is in the \emph{stable range (with $G_m$ smaller)} if the defining module for $G'_{m'}$ has a totally isotropic subspace of dimension greater of equal than the dimension of the defining module of $G_m$. 
\end{defi}
For instance, for pairs $(\Sp_{2m}(q),\Or_{2m'}(q))$ the stable range condition (with $\Or_{2m'}(q)$ smaller) means that $m\geq 2m'$.
\begin{prop}\cite[Propositions 4.3 and 4.5]{Kudla}
In the situation above, for every irreducible representation $\pi$ of $G_m$, $\Theta(\pi)\neq 0$
\end{prop}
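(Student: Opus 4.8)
The plan is to prove the equivalent statement that every $\pi\in\Irr(G_m)$ is a constituent of the restriction $\omega|_{G_m}$. Indeed, restricting the decomposition $\omega_{G_m,G'_{m'}}=\sum_{\pi}\pi\otimes\Theta(\pi)$ to the first factor gives $\omega|_{G_m}=\bigoplus_{\pi}\pi^{\oplus\dim\Theta(\pi)}$, so $\Theta(\pi)\neq 0$ if and only if $\pi$ occurs in $\omega|_{G_m}$. I would in fact exhibit a copy of the regular representation $\mathbb{C}[G_m]$ inside $\omega|_{G_m}$; since $\mathbb{C}[G_m]$ contains every irreducible of $G_m$, this settles all $\pi$ at once.

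First I would fix a Schrödinger model of $\omega$ adapted to $G_m$. Writing the defining modules of a type I pair as the tensor factors of $W$ as in (1)--(2) of Section \ref{DualPairs} (the linear case is entirely analogous, with $V\oplus V^*$ replacing the tensor product), I would tensor the \emph{whole} defining module of $G_m$ with a maximal isotropic subspace of the factor carrying its partner $G'_{m'}$. Because $G_m$ acts only on its own tensor factor, the subspace $X\subset W$ so obtained is stable under $G_m$, and the stable range hypothesis is exactly what makes $X$ a Lagrangian. Realizing $\omega$ on $\mathbb{C}[X]$, the elements of $G_m$ preserve $X$ and hence act through the associated permutation action on $\mathbb{C}[X]$ twisted by a linear character $\chi$ of $G_m$ (the metaplectic cocycle restricted to the stabilizer of a Lagrangian is such a character); for unitary pairs the extra factor $\nu_m$ defining $\omega^\flat$ only alters $\chi$. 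Thus $\omega|_{G_m}\cong\chi\otimes\mathbb{C}[X]$, where $X$ is identified, as a $G_m$-set, with a direct sum of copies of the defining module of $G_m$ acted on linearly.

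Next I would extract a free $G_m$-orbit. Under the identification above, a point $x_0\in X$ is a tuple of $d$ vectors in the defining module of $G_m$, where $d$ is the dimension of the chosen maximal isotropic subspace of the partner factor; the stable range guarantees $d\geq\dim(\text{defining module of }G_m)$, so I may choose $x_0$ whose coordinates span that module. An isometry fixing a spanning set is the identity, so the stabilizer of $x_0$ in $G_m$ is trivial and $G_m\cdot x_0$ is a free orbit. The functions in $\mathbb{C}[X]$ supported on this orbit then form a $G_m$-submodule isomorphic to $\chi\otimes\mathbb{C}[G_m]\cong\mathbb{C}[G_m]$, since twisting the regular representation by a character returns the regular representation. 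Hence $\mathbb{C}[G_m]\hookrightarrow\omega|_{G_m}$, every $\pi\in\Irr(G_m)$ occurs, and $\Theta(\pi)\neq 0$.

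The step I expect to be the real work is the model-theoretic one: verifying, separately for the symplectic--orthogonal pairs (with either member smaller) and for the unitary pairs, that the $G_m$-stable subspace $X$ built above is a genuine Lagrangian for the symplectic form on $W$, that the restriction of the metaplectic cocycle to $G_m$ is an honest linear character so that $\omega|_{G_m}$ really is a twisted permutation module, and that the numerical stable-range inequality is precisely the inequality $d\geq\dim(\text{defining module of }G_m)$ needed for a spanning tuple to exist. Each of these reduces to a short computation with the explicit forms of Section \ref{DualPairs}, but they are genuinely type-dependent and must be carried out case by case.
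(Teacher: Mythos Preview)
The paper does not supply its own proof of this proposition; it is quoted as a result from \cite{Kudla}, so there is nothing in the paper to compare your argument against directly. Your strategy---show that the regular representation of $G_m$ embeds in $\omega|_{G_m}$ by finding a free $G_m$-orbit in a Schr\"odinger model---is the standard one and is essentially what Kudla does.

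There is, however, a genuine misidentification in your outline. You write that ``the stable range hypothesis is exactly what makes $X$ a Lagrangian,'' but this is not what the stable range does. Whether $X=V_m\otimes Y$ (with $Y$ maximal isotropic in the partner's module $V_{m'}$) is Lagrangian depends only on whether $\dim Y=\tfrac{1}{2}\dim V_{m'}$, i.e.\ on whether $V_{m'}$ is split; it has nothing to do with the inequality $\dim Y\geq\dim V_m$. When the partner is symplectic, $X$ is always Lagrangian, stable range or not; when the partner is $\Or^-_{2m'}$ or odd orthogonal, $X$ is \emph{never} Lagrangian, no matter how large $m'$ is. What the stable range actually buys you is the existence of a free orbit: it guarantees $\dim Y\geq\dim V_m$, so a point of $V_m\otimes Y\cong V_m^{\dim Y}$ can be chosen with coordinates spanning $V_m$.

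The fix is to use a mixed model. Choose $Y\subset V_{m'}$ isotropic with $\dim Y=\dim V_m$ (this is precisely the stable range hypothesis), complete it to a Witt decomposition $V_{m'}=Y\oplus V_0\oplus Y'$, and split $W=(V_m\otimes(Y\oplus Y'))\operp(V_m\otimes V_0)$. On the first summand $V_m\otimes Y$ \emph{is} Lagrangian and your free-orbit argument gives $\mathbb{C}[G_m]\hookrightarrow\omega_{W_1}|_{G_m}$; then $\omega|_{G_m}=\omega_{W_1}|_{G_m}\otimes\omega_{W_0}|_{G_m}$ still contains $\mathbb{C}[G_m]$, since $\mathbb{C}[G_m]\otimes\rho\cong\mathbb{C}[G_m]^{\dim\rho}$ for any $\rho$. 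With this adjustment your plan goes through.
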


\section{Cuspidal unipotent representations}\label{SectionCuspidalUnipotent}
For type I dual pairs $(G_m,G'_{m'})$, the Howe correspondence defined by the representation $\omega^\flat_{m,m'}$ of $G_m\times G'_{m'}$, will be denoted $\Theta_{m,m'}^\flat$. In this section show the behaviour of this correspondence with respect to Harish-Chandra series of cuspidal unipotent representations. 

The occurrence of a cuspidal irreducible representation $\pi$ of $G_m$ in the Howe correspondence for $(G_m,G'_{m'})$ with $m'$ minimal (that is $\Theta_{m,k}^\flat(\pi)=0$ for $k<m'$), is referred to as the \emph{first occurrence}\index{first occurrence}. The integer $m'$ will be called \emph{first occurrence index}\index{first occurrence!index} for $\pi$. In this case, the representation $\Theta_{m,m'}^\flat(\pi)$ is cuspidal and irreducible (cf. \cite[Theorem 2.2]{Adams-Moy}), and we denote it by $\theta^\flat(\pi)$.

Another important fact about the correspondence $\Theta_{m,m'}^\flat$, is that it respects unipotent representations. If the representation $\pi\otimes\pi'$ of $G_m\times G'_{m'}$ appears in $\omega^\flat_{m,m'}$, then $\pi$ is unipotent if and only if $\pi'$ is unipotent (cf. \cite[Proposition 2.3]{AMR}).

Between groups belonging to a dual pair, the only ones having cuspidal unipotent representations are $\GL_1(q)$, $\Sp_{2k(k+1)}(q)$, $\U_{(k^2+k)/2}(q)$, and $\SO^\epsilon_{2k^2}(q)$ with $\epsilon=\sgn(-1)^k$. Moreover, this representation is unique (trivial for the linear group) (cf. \cite[Theorem 3.22]{Lusztig3}). For the last three groups we denoted it by $\lambda_k$.

Inducing the cuspidal unipotent representation $\lambda_k$ of $\SO^\epsilon_{2k^2}(q)$ to $\Or^\epsilon_{2k^2}(q)$, we obtain two irreducible cuspidal representations, $\lambda^I_k$ and $\lambda_k^{II}$, which differ by tensoring with the $\sgn$ character of $\Or^\epsilon_{2k^2}(q)$.
\begin{thm}\cite[Theorems 4.1 and 5.2]{Adams-Moy}\label{CuspidalUnipotentHowe}
	The Howe correspondence $\Theta_{m,m'}^\flat$ for type I dual pairs takes cuspidal unipotent representations to cuspidal unipotent representations as follows :
	\begin{itemize}
		\item For towers $(\mathbf{Sp},\mathbf{O}^\epsilon)$, $\lambda_k$ corresponds to $\lambda_k^{II}$ if $\epsilon$ is the sign of $(-1)^k$, and to $\lambda_{k+1}^I$ otherwise. 
		\item For towers $(\mathbf{U}^\epsilon,\mathbf{U}^{\epsilon'})$, $\lambda_k$ corresponds to $\lambda_{k'}$, where $k'=k+1$ or $k'=k-1$. We take $k$ so that $\epsilon$ is the sign of $(-1)^{k(k+1)/2}$, and we choose $k'$ such that $\epsilon'=(-1)^{k'(k'+1)/2}$.
	\end{itemize}
	Moreover, these cases give the first occurrence $\theta^\flat(\lambda_k)$ of $\lambda_k$.
\end{thm}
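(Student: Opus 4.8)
The plan is to derive the theorem from three facts already available in this section --- cuspidality of the first occurrence (\cite[Theorem 2.2]{Adams-Moy}), preservation of unipotency by $\Theta^\flat_{m,m'}$ (\cite[Proposition 2.3]{AMR}), and Lusztig's classification of cuspidal unipotent representations (\cite[Theorem 3.22]{Lusztig3}) --- thereby reducing the whole statement to two arithmetic determinations: the first occurrence index $m'$, and, in the orthogonal case, the choice of extension $\lambda^I$ versus $\lambda^{II}$. Fix $\lambda_k$ on $G_m$ (so $G_m=\Sp_{2k(k+1)}(q)$, resp.\ $\U_{(k^2+k)/2}(q)$) and let $m'$ be its first occurrence index in the target tower, so that $\theta^\flat(\lambda_k)=\Theta^\flat_{m,m'}(\lambda_k)$ is defined. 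By \cite[Theorem 2.2]{Adams-Moy} this lift is irreducible and cuspidal; since $\lambda_k\otimes\theta^\flat(\lambda_k)$ occurs in $\omega^\flat_{m,m'}$, \cite[Proposition 2.3]{AMR} forces it to be unipotent as well. By \cite[Theorem 3.22]{Lusztig3}, within an orthogonal (resp.\ unitary) Witt tower only the groups $\Or^\epsilon_{2{k'}^2}(q)$ with $\epsilon=\sgn(-1)^{k'}$ (resp.\ $\U_{({k'}^2+k')/2}(q)$) carry cuspidal unipotent representations, and there they are $\lambda_{k'}$ up to the $\Or^\epsilon/\SO^\epsilon$ sign ambiguity. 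Hence $\theta^\flat(\lambda_k)$ equals $\lambda_{k'}$ (unitary) or $\lambda_{k'}^{I/II}$ (orthogonal) for some $k'$, and it remains to compute $k'$ and, in the orthogonal case, to select the extension.

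To locate $m'$ I would compute the multiplicities $\langle\omega^\flat_{m,j},\,\lambda_k\otimes\mu\rangle_{G_m\times G'_j}$, where $\mu$ is the candidate cuspidal unipotent representation of the successive admissible members $G'_j$ of the fixed target tower, using the explicit character of the Weil representation (whose modulus at $(g,g')$ is a power of $q$ controlled by the fixed space of $g\otimes g'$ on the tensor module, up to a Gauss-sum factor). The admissible dimensions are rigid: in the orthogonal tower $\mathbf{O}^\epsilon$ a cuspidal unipotent lift can only sit on $\Or^\epsilon_{2{k'}^2}(q)$ with $\epsilon=\sgn(-1)^{k'}$, so the admissible $k'$ all have fixed parity, and the minimal nonvanishing one is $k'=k$ when $\epsilon=\sgn(-1)^k$ and $k'=k+1$ otherwise, once the lift is shown to vanish for smaller admissible $k'$ (notably $k'=k-1$); in the unitary tower $\mathbf{U}^{\epsilon'}$ the constraint $\epsilon'=(-1)^{k'(k'+1)/2}$ singles out a unique $k'\in\{k-1,k+1\}$. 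The identity $k^2+(k+1)^2=2k(k+1)+1$ serves as a clean consistency check, showing that the two orthogonal towers receive $\lambda_k$ at indices $k^2$ and $(k+1)^2$, exactly the conservation of first occurrence across the tower pair claimed in the statement.

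For symplectic-orthogonal pairs $\omega^\flat=\omega$ is genuinely a representation of $\Sp_{2m}(q)\times\Or^\epsilon_{2{k'}^2}(q)$, and the two extensions $\lambda_{k'}^{I},\lambda_{k'}^{II}$ of the $\SO^\epsilon$-representation $\lambda_{k'}$ differ by $\otimes\sgn$; to choose between them I would evaluate, on the $\lambda_k$-isotypic subspace, the action of the central involution $-1\in\Or^\epsilon_{2{k'}^2}(q)$, which acts on $W=V_1\otimes V_2$ as $-\mathrm{id}$, hence as the central element of $\Sp(W)$, so that $\omega_\psi(-1)$ is a known explicit scalar times the parity operator and its sign decides whether the lift carries the trivial or the $\sgn$ character of $\Or^\epsilon/\SO^\epsilon$. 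I expect the principal obstacle to be exactly this sign arithmetic: the Weil-index and Gauss-sum factors attached to the quadratic form on $V_2$ depend delicately on $\epsilon$, on $q\bmod 4$, and on the parity of $k$, and their interplay must simultaneously produce the $I$ versus $II$ dichotomy and the discriminant constraint $\epsilon=\sgn(-1)^k$. Rather than a brute-force character sum, the most economical route is probably a see-saw dual pair argument, reducing each multiplicity to the occurrence of the trivial or sign representation in a classically understood auxiliary correspondence, thereby separating the delicate sign computation from the routine dimension bookkeeping.
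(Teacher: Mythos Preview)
The paper does not prove this theorem at all: it is quoted verbatim from \cite[Theorems 4.1 and 5.2]{Adams-Moy} and used as a black box, so there is no ``paper's own proof'' to compare your proposal against. Your outline is therefore not being measured against an argument in this text but against the original Adams--Moy proof, which lies outside the paper.

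As a proof strategy your reduction is sound: cuspidality of the first occurrence, preservation of unipotency, and Lusztig's classification do force $\theta^\flat(\lambda_k)$ to be one of the finitely many cuspidal unipotent representations in the target tower, so the whole statement does reduce to determining $k'$ and (in the orthogonal case) the extension $I$ versus $II$. But what you have written is a plan, not a proof. The two computations you isolate --- the first-occurrence multiplicity $\langle\omega^\flat_{m,j},\lambda_k\otimes\mu\rangle$ and the sign of $\omega_\psi(-1)$ on the $\lambda_k$-isotypic piece --- are precisely the content of the Adams--Moy theorems you are citing, and you do not carry them out. In particular, showing vanishing for the smaller admissible $k'$ (e.g.\ ruling out $k'=k-1$ in the orthogonal tower with $\epsilon=\sgn(-1)^k$) requires an actual character-sum or see-saw calculation that you only gesture at. Your own remark that the sign arithmetic is ``the principal obstacle'' is accurate: until that is done, the proposal remains an outline rather than a proof.
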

This theorem allows us to write the Howe correspondence $\Theta_{m,m'}^\flat$, between cuspidal unipotent representations, as a function on natural integers $\theta:\mathbb{N}\rightarrow\mathbb{N}$, defined by $\theta^\flat(\lambda_k)=\lambda_{\theta(k)}$.

Let $\mathbf{T}$, and $\mathbf{T'}$ be two Witt towers such that pairs $(G_m,G'_{m'})$ with $G_m \in \mathbf{T}$, and $G'_{m'} \in \mathbf{T'}$ are of type I. For a fixed group $G_m$, a group $G_l$ in the same Witt tower, and such that $l<m$, together with a cuspidal representation $\lambda$ of $G_l$, yield a cuspidal pair $(G_l\times T_{1/2(m-l)},\lambda\otimes 1)$, where $T_k$ denotes the maximal torus of diagonal matrices in $\GL_k$.
\begin{thm}\cite[Th\'eor\`eme 3.7]{AMR}\label{compatibilityHC}
 In the situation above, let $l'$ be the first occurrence index of $\lambda$, and $\lambda'=\theta^\flat(\lambda)$ the corresponding cuspidal representation of $G'_{l'}\in\mathbf{T}'$. For $\gamma \in \Irr(G_m,\lambda\otimes 1)$, $\Theta_{m,m'}^\flat(\gamma)=0$ whenever $m'<l'$, and $\Theta_{m,m'}^\flat(\gamma) \in \mathscr{R}(G'_{m'},\lambda'\otimes 1)$ otherwise. Moreover, the representation $\lambda$ is unipotent if and only if the same holds for $\lambda'$. 
\end{thm}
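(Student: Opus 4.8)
The plan is to control the \emph{cuspidal support} of every irreducible constituent of $\Theta^\flat_{m,m'}(\gamma)$: I want to show, on the one hand, that this support is always of the shape $(G'_{l'}\times T',\lambda'\otimes 1)$, and on the other, that such a support cannot be realized inside $G'_{m'}$ once $m'<l'$. Since cuspidal support is read off from Jacquet functors, the engine of the argument is the compatibility of the Weil representation $\omega^\flat$ with the parabolic subgroups of each member of the pair, that is, a Kudla-type filtration in the finite-field setting. Combined with the first-occurrence relation $\theta^\flat(\lambda)=\lambda'$, this will reduce everything to an induction on the ranks $m$ and $m'$, grounded in the cuspidal case.

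First I would recall that, by Harish-Chandra theory, $\gamma\in\Irr(G_m,\lambda\otimes 1)$ is characterized by having cuspidal support $(G_l\times T,\lambda\otimes 1)$, i.e.\ its only datum on the classical factor is $\lambda$ on $G_l$, with trivial character on the torus. The key structural input is then the following consequence of the Kudla filtration: for a maximal parabolic $P'=L'U'\subset G'_{m'}$ with Levi $L'=\GL_b\times G'_{m'-b}$, one has, in the Grothendieck group,
$$
r^{G'_{m'}}_{L'}\big(\Theta^\flat_{m,m'}(\gamma)\big)\;=\;\bigoplus_{j}\ \tau_j\otimes\Theta^\flat_{m,\,m'-b}(\gamma)\;+\;(\text{lower terms}),
$$
with $\tau_j\in\Irr(\GL_b)$, the lower terms arising from strictly smaller, partly type II, dual pairs. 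I would prove this by restricting $\omega^\flat_{m,m'}$ to $G_m\times P'$ and analysing the resulting double-coset filtration; its leading subquotient reproduces $\omega^\flat_{m,m'-b}$ of the smaller pair tensored with a (twisted) $\GL_b$-factor, and the symmetric statement holds with the roles of the two members exchanged. In particular, the classical cuspidal part of any Jacquet module of $\Theta^\flat_{m,m'}(\gamma)$ is a classical cuspidal part of some $\Theta^\flat_{m,m'-b}(\gamma)$ with $m'-b<m'$.

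Granting this (and its analogue obtained by exchanging the two members), the two assertions follow by a simultaneous induction on $m$ and $m'$, the base case $m=l$, $m'=l'$ being the first occurrence $\Theta^\flat_{l,l'}(\lambda)=\lambda'$ of \cite[Theorem 2.2]{Adams-Moy} (made explicit in the unipotent case by Theorem \ref{CuspidalUnipotentHowe}). For $m'\geq l'$: iterating the displayed identity drives the classical factor down until it meets the cuspidal level, where the induction hypothesis forces the classical cuspidal support to be $\lambda'$ on $G'_{l'}$; since this holds for every $P'$, the cuspidal support of $\Theta^\flat_{m,m'}(\gamma)$ is $(G'_{l'}\times T',\lambda'\otimes 1)$, i.e.\ $\Theta^\flat_{m,m'}(\gamma)\in\mathscr{R}(G'_{m'},\lambda'\otimes 1)$. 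For $m'<l'$: the classical support $\lambda'$ lives on $G'_{l'}$ with $l'>m'$ and therefore cannot occur inside $G'_{m'}$; the displayed identity, together with the vanishing $\Theta^\flat_{l,m'}(\lambda)=0$ at the bottom (which is the definition of the first occurrence index), propagates to give $\Theta^\flat_{m,m'}(\gamma)=0$. Finally, the equivalence ``$\lambda$ unipotent if and only if $\lambda'$ unipotent'' is immediate from the fact that $\omega^\flat$ respects unipotence (\cite[Proposition 2.3]{AMR}) applied to the pair $\lambda\otimes\lambda'$ occurring in $\omega^\flat_{l,l'}$.

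The main obstacle is the structural identity itself. Establishing the Kudla filtration of $\omega^\flat_{m,m'}\big|_{G_m\times P'}$ precisely enough to (i) identify its leading subquotient as the Weil representation of the smaller pair tensored with the correct $\GL_b$-factor and (ii) guarantee that the lower, partly type II, terms never create a classical cuspidal part outside $\{\lambda'\}$ requires a careful Mackey-theoretic double-coset analysis and the simultaneous induction on both members. The delicate bookkeeping throughout is the character twist $\nu_m(u)=(\det u)^{(q+1)/2}$ that distinguishes $\omega^\flat$ from $\omega$ in the unitary case, together with the triviality of the torus character on the $G'$ side: both must be tracked consistently across the filtration, since the twist alters the action on the $\GL_b$-factors.
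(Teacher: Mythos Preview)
The paper does not give its own proof of this statement: Theorem~\ref{compatibilityHC} is quoted verbatim from \cite[Th\'eor\`eme~3.7]{AMR} and used as a black box, so there is nothing in the present paper to compare your argument against.

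That said, your sketch is essentially the strategy that \cite{AMR} itself follows. The compatibility of $\omega^\flat$ with parabolic induction and Jacquet restriction (the finite-field Kudla filtration) is precisely the content of \cite[Th\'eor\`eme~3.5 and Corollaire~3.6]{AMR}, and Th\'eor\`eme~3.7 is then deduced by the inductive mechanism you describe: reduce the cuspidal support on the $G'$ side step by step until one reaches the first-occurrence pair $(G_l,G'_{l'})$, where the classical factor is forced to be $\lambda'$. Your identification of the main obstacle is accurate: in \cite{AMR} most of the work goes into computing the subquotients of the restriction of $\omega^\flat$ to a maximal parabolic and checking that the ``lower'' type~II pieces cannot produce a foreign classical cuspidal support. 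The remark about tracking the twist $\nu_m$ in the unitary case is also pertinent and is handled explicitly there. So your proposal is correct in outline and coincides with the approach of the source being cited; it is just that the present paper takes all of this for granted.
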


\section{Correspondence between Weyl groups}\label{sec:Howe-Weyl}

Let $\mathbf{G}$ be a reductive group defined over $\mathbb{F}_q$, and $\mathbf{P}=\mathbf{M}\mathbf{U}$ be a Levi decomposition of the rational parabolic subgroup $\mathbf{P}$. For a cuspidal representation $\delta$ of $M$ set
 $$
 W_\mathbf{G}(\delta) = \{x\in N_{G}(\mathbf{M})/M : {}^x\delta=\delta\}.
 $$
\begin{thm}\cite[Corollary 5.4]{HL} and \cite[Corollary 2]{Geck1993}\label{How-Leh}
There is an isomorphism 
$$
\End_G(R_\mathbf{M}^\mathbf{G}(\delta)) \simeq \mathbb{C}[W_\mathbf{G}(\delta)].
$$
In particular, irreducible representations in the \emph{Harish-Chandra series} $\Irr(\mathbf{G},\mathbf{M},\delta)$ are indexed by irreducible representations of $W_\mathbf{G}(\delta)$.
\end{thm}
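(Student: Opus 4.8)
The plan is to identify $\End_G(R_\mathbf{M}^\mathbf{G}(\delta))$ concretely as an algebra of intertwining operators indexed by $W_\mathbf{G}(\delta)$, and then to show that this algebra collapses to the group algebra once we work over $\mathbb{C}$. First I would compute the dimension of the endomorphism algebra by Mackey theory. Writing $R_\mathbf{M}^\mathbf{G}(\delta)=\Ind_P^G\tilde\delta$, where $\tilde\delta$ is $\delta$ inflated to $P=\mathbf{M}\mathbf{U}$ through the projection $P\twoheadrightarrow M$, Frobenius reciprocity together with the Mackey decomposition over the double cosets $P\backslash G/P$ expresses $\End_G(R_\mathbf{M}^\mathbf{G}(\delta))$ as a direct sum of $\Hom$-spaces between restrictions of conjugates of $\tilde\delta$. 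The decisive input is the \emph{cuspidality} of $\delta$: it forces the Harish-Chandra restriction of $\delta$ to every proper Levi of $M$ to vanish, so that the only double cosets which contribute are those whose representatives $w$ normalize $\mathbf{M}$ and satisfy ${}^w\delta\cong\delta$. This reduces the Mackey sum to
$$
\End_G(R_\mathbf{M}^\mathbf{G}(\delta))\cong\bigoplus_{w\in W_\mathbf{G}(\delta)}\Hom_M(\delta,{}^w\delta),
$$
and each summand is one-dimensional by Schur's lemma, since ${}^w\delta\cong\delta$. Hence $\dim\End_G(R_\mathbf{M}^\mathbf{G}(\delta))=|W_\mathbf{G}(\delta)|$, and we obtain a canonical basis $\{B_w\}_{w\in W_\mathbf{G}(\delta)}$ of intertwining operators.

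Next I would determine the multiplicative law of this basis. The composites satisfy $B_wB_{w'}=\mu(w,w')\,B_{ww'}$ when lengths add, and more generally the $B_w$ obey Iwahori--Hecke-type quadratic relations, so that a priori $\End_G(R_\mathbf{M}^\mathbf{G}(\delta))$ is a twisted Hecke algebra attached to $W_\mathbf{G}(\delta)$: here $W_\mathbf{G}(\delta)$ decomposes as a semidirect product $R(\delta)\rtimes C(\delta)$ of a Coxeter ("reflection") subgroup by a complement, the structure involves parameters $q_c$ indexed by the orbits of reflections, and $\mu$ is a $2$-cocycle. Establishing this normal form is precisely the Howlett--Lehrer analysis. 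The remaining step is to pass to $\mathbb{C}$: one invokes Geck's refinement, which shows that the cocycle $\mu$ is trivial and that the Hecke parameters can be deformed to the group-algebra point without altering the isomorphism type over $\mathbb{C}$, yielding the desired $\End_G(R_\mathbf{M}^\mathbf{G}(\delta))\cong\mathbb{C}[W_\mathbf{G}(\delta)]$.

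The hard part is exactly this final collapse. Neither the triviality of the $2$-cocycle $\mu$ nor the abstract isomorphism between a (possibly unequal-parameter) Iwahori--Hecke algebra over $\mathbb{C}$ and the group algebra of its underlying reflection group is a direct computation; both rest on the deformation and specialization machinery for Hecke algebras and on the explicit description of the ramification group $W_\mathbf{G}(\delta)$. This is where I would rely on the cited results of Howlett--Lehrer and Geck rather than attempt a self-contained argument.

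Finally, the ``in particular'' assertion is formal. Since $\mathbb{C}[W_\mathbf{G}(\delta)]$ is semisimple, so is $A:=\End_G(R_\mathbf{M}^\mathbf{G}(\delta))$, and the double-centralizer theorem puts the irreducible $G$-constituents of $R_\mathbf{M}^\mathbf{G}(\delta)$ — that is, the Harish-Chandra series $\Irr(\mathbf{G},\mathbf{M},\delta)$ — in bijection with the simple $A$-modules, hence with $\Irr(W_\mathbf{G}(\delta))$.
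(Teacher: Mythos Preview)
The paper does not prove this theorem at all: it is stated with citations to \cite{HL} and \cite{Geck1993} and used as a black box. There is therefore no ``paper's own proof'' to compare against. Your proposal is a faithful outline of the actual argument behind those citations: the Mackey computation yielding $\dim\End_G(R_\mathbf{M}^\mathbf{G}(\delta))=|W_\mathbf{G}(\delta)|$ via cuspidality, the Howlett--Lehrer identification of the endomorphism algebra as a twisted Hecke algebra for $W_\mathbf{G}(\delta)=R(\delta)\rtimes C(\delta)$, and Geck's result that the cocycle is trivial so that over $\mathbb{C}$ one obtains the group algebra. The final paragraph invoking the double-centralizer theorem is the standard way to extract the indexing of the Harish-Chandra series. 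In short, your sketch supplies exactly what the paper omits by citation, and does so correctly; you have also been appropriately honest that the ``collapse'' step is where the real work lies and is not something to redo from scratch.
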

We refer to this parametrization as the \emph{Howlett-Lehrer bijection}.
 
Let $G_m$ belong to a Witt tower of symplectic, unitary or orthogonal groups. Harish-Chandra series for $G_m$ can be parametrized by cuspidal pairs $(G_{m-\lvert\mathbf{t}\rvert}\times\GL_{\mathbf{t}},\varphi\otimes \boldsymbol{\sigma})$, for partitions $\mathbf{t}=(t_1,\ldots,t_r)$ such that $\lvert\mathbf{t}\rvert\leq m$. Series containing unipotent representations correspond to representations $\varphi\otimes\boldsymbol{\sigma}$, cuspidal \emph{and} unipotent. This forces the cuspidal pair to become $(G_{m(k)}\times T_r,\lambda_k\otimes 1)$, where $\lambda_k$ is a cuspidal unipotent representation of $G_{m(k)}$, $r=(m-m(k))/2$, and $m(k)$ is equal to $k^2+k$ for symplectic, $(k^2+k)/2$ for unitary, and $k^2$ for orthogonal groups. For the first two kinds of groups, the Harish-Chandra series will be denoted by $\Irr(G_m)_k$, and the set of representations spanned by this series will be denoted by $\mathscr{R}(G_m)_k$. For orthogonal groups $\Or^\epsilon_{2m}(q)$ and $k$ verifying $\epsilon=(-1)^k$, the Harish-Chandra series corresponding  $\lambda_k^I$ and $\lambda_k^{II}$ are denoted by $\Irr(\Or^\epsilon_{2m}(q))_k^I$, and $\Irr(\Or^\epsilon_{2m}(q))_k^{II}$, and their spanned sets by $\mathscr{R}(\Or^\epsilon_{2m}(q))_k^I$, and $\mathscr{R}(\Or^\epsilon_{2m}(q))_k^{II}$ respectively. 

For symplectic or unitary groups, the unicity of $\lambda_k$ implies that the condition on the elements of the group $W_{G_m}(1\otimes\lambda_k)$ is trivial. Therefore, the Howlett-Lehrer bijection identifies $\Irr(G_m)_k$ to $N_{G_m}(L_k)/L_k$, which is a Weyl group of type $\mathbf{B}_{(m-m(k))/2}$. The same reasoning allows us to state that the series $\Irr(\Or^\epsilon_{2m}(q))_{k}^{I}$ and $\Irr(\Or^\epsilon_{2m}(q))_{k}^{II}$ are in bijection with the irreducible representations of a Weyl group of type $\mathbf{B}_{m-k^2}$.

These remarks, together with Theorems \ref{CuspidalUnipotentHowe} and \ref{compatibilityHC} imply that for type I pairs $(\Sp_{2m}(q),\Or^\epsilon_{2m'}(q))$, and $(\U_m(q),\U_{m'}(q))$ the Howe correspondence between Harish-Chandra series of cuspidal unipotent representations leads to a correspondence between pairs of type $\mathbf{B}$ Weyl groups : $(\mathbf{B}_{m-k(k+1)},\mathbf{B}_{m'-\theta(k)^2})$ for symplectic-orthogonal pairs, and $(\mathbf{B}_{\frac{1}{2}(m-k(k+1)/2)},\mathbf{B}_{\frac{1}{2}(m'-\theta(k)(\theta(k)+1)/2)})$ for unitary pairs.  

Let $(G_m,G'_{m'})$ be a type I dual pair and $(W_r,W_{r'})$ be one of the corresponding pairs of Weyl groups from the previous paragraph. Denote $\theta(k)$ by $k'$, and denote the  projection of $\omega_{m,m'}$ onto $\mathscr{R}(G_m)_k\otimes \mathscr{R}(G'_{m'})_{k'}$ by $\omega_{m,m',k}$.
\begin{thm}\cite[Theorem 3.31]{Pan3}\label{ConjectureSymplectic-Orthogonal}
 For the symplectic-orthogonal dual pair $(\Sp_{2m}(q),\Or^\epsilon_{2m'}(q))$, there is a bijection
$$
\Irr(\Sp_{2m}(q))_k\times \Irr(\Or^\epsilon_{2m'}(q))^{\Gamma}_{k'}\simeq \Irr(W_r\times W_{r'}),
$$
where $\Gamma=II$ if $\epsilon=(-1)^k$ and $\Gamma= I$ otherwise. Moreover, it identifies $\omega_{m,m',k}$ to the representation $\Omega_{r,r'}$ whose character is :
\begin{align}\label{so1}
\sum_{l=0}^{\min(r,r')}\sum_{\chi\in\Irr(W_l)}(\Ind_{W_l\times W_{r-l}}^{W_r}\chi \otimes \sgn) \otimes (\Ind_{W_l\times W_{r'-l}}^{W_r'}\chi \otimes \sgn),
\end{align} 
for $(\Sp_{2m}(q),\Or^\epsilon_{2m'}(q))$ if $\epsilon=(-1)^k$; and
\begin{align}\label{so2}
\sum_{l=0}^{\min(r,r')}\sum_{\chi\in\Irr(W_l)}(\Ind_{W_l\times W_{r-l}}^{W_r}\chi \otimes 1) \otimes (\Ind_{W_l\times W_{r'-l}}^{W_r'}\chi \otimes \sgn),
\end{align}
otherwise.
\end{thm}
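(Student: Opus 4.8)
\emph{Proof plan.} The statement has two parts of very different weight: the existence of the bijection, which is essentially formal, and the identification of $\omega_{m,m',k}$ with $\Omega_{r,r'}$, which carries all the content. For the bijection I would apply Theorem~\ref{How-Leh} to each factor. The discussion preceding the statement already identifies $\Irr(\Sp_{2m}(q))_k$ with $\Irr(W_r)$ and $\Irr(\Or^\epsilon_{2m'}(q))^\Gamma_{k'}$ with $\Irr(W_{r'})$; the value of $\Gamma$ and of $k'=\theta(k)$ are dictated by the cuspidal correspondence of Theorem~\ref{CuspidalUnipotentHowe}, which says that $\lambda_k$ first occurs as $\lambda^{II}_k$ when $\epsilon=(-1)^k$ and as $\lambda^I_{k+1}$ otherwise. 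Theorem~\ref{compatibilityHC} guarantees that $\Theta_{m,m'}^\flat$ sends $\mathscr{R}(\Sp_{2m}(q))_k$ into $\mathscr{R}(\Or^\epsilon_{2m'}(q))^\Gamma_{k'}$, so the two Howlett--Lehrer parametrizations are compatible with the correspondence, and the product of the two bijections yields $\Irr(\Sp_{2m}(q))_k\times \Irr(\Or^\epsilon_{2m'}(q))^\Gamma_{k'}\simeq\Irr(W_r\times W_{r'})$.

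For the character identity it suffices, by orthonormality of the irreducible characters of $W_r\times W_{r'}$, to match multiplicities: writing $\gamma\leftrightarrow\chi_{\xi,\eta}$ and $\gamma'\leftrightarrow\chi_{\xi',\eta'}$ under Howlett--Lehrer, I must show that the Howe multiplicity $\dim\Hom_{G_m\times G'_{m'}}(\omega^\flat_{m,m'},\gamma\otimes\gamma')$ equals $\langle\Omega_{r,r'},\chi_{\xi,\eta}\otimes\chi_{\xi',\eta'}\rangle$. The right-hand side is directly computable from \eqref{so1}--\eqref{so2}: Frobenius reciprocity turns each summand into a scalar product of the restrictions of $\chi_{\xi,\eta}$ and $\chi_{\xi',\eta'}$ to $W_l\times W_{r-l}$ and $W_l\times W_{r'-l}$, whose coefficients are Littlewood--Richardson numbers for the partitions composing each bipartition, twisted by $\sgn$ (which transposes and swaps the two rows of a bipartition); summing over $l$ and over $\chi\in\Irr(W_l)$ gives a closed combinatorial expression. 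The left-hand side I would approach through Lusztig's parametrization of unipotent representations by symbols: the two series consist of unipotent representations attached to symbols of fixed defect, and the Howlett--Lehrer bijection is realized by the standard passage from such symbols to bipartitions.

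The central computation is then the evaluation of these unipotent Howe multiplicities, for which I would use the decomposition of the Weil character into Deligne--Lusztig virtual characters. Restricted to the unipotent part (semisimple parameter $s=1$), the uniform projection of $\omega^\flat_{m,m'}$ is an explicit sum $\sum_{w,w'}\pm R^{\Sp}_{T_w}(1)\otimes R^{\Or}_{T_{w'}}(1)$ over pairs of $F$-classes of maximal tori matched by the combinatorics of the tensor module $W=V_1\otimes V_2$. Transporting the $R_{T_w}(1)$ through Lusztig's description of unipotent characters (the expansion $R_{T_w}(1)=\sum_{\chi}\chi(w)R_\chi$ into almost characters, together with the Howlett--Lehrer theory of the series) expresses the uniform projection of $\omega_{m,m',k}$ as a class function on $W_r\times W_{r'}$, which I would match term by term with the uniform projection of $\Omega_{r,r'}$ read off from \eqref{so1}--\eqref{so2}. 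The cuspidal case $r=r'=0$ of Theorem~\ref{CuspidalUnipotentHowe} then pins down the residual sign and the discrete choice between \eqref{so1} and \eqref{so2}, i.e.\ the $\sgn$-twist on the symplectic factor, while an induction on $r+r'$ whose inductive step is an instance of the compatibility of $\Theta^\flat$ with Harish--Chandra induction (an enhancement of Theorem~\ref{compatibilityHC}) propagates the identity up the two Harish--Chandra series.

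The hard part is controlling the \emph{non-uniform} constituents of $\omega^\flat_{m,m'}$, which the Deligne--Lusztig expansion does not detect directly and which are precisely what separates unipotent representations inside a single family. Forcing the symbol combinatorics of the Howe correspondence to agree with the Littlewood--Richardson bookkeeping of \eqref{so1}--\eqref{so2} --- and in particular getting every $\sgn$-twist and every defect shift right for both orthogonal types $\epsilon=\pm$ and both values of $\Gamma$ --- is where the real difficulty lies; this is the technical core carried out in \cite{Pan3}.
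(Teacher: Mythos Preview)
The paper does not prove this statement: Theorem~\ref{ConjectureSymplectic-Orthogonal} is quoted verbatim from \cite[Theorem~3.31]{Pan3} and used as a black box (it was formulated as a conjecture in \cite{AMR} and proved by Pan). There is therefore no proof in the paper to compare your proposal against.

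That said, your sketch is a reasonable outline of the architecture of the argument as it actually appears in the literature: the bijection part is indeed formal Howlett--Lehrer as set up in the paragraphs preceding the statement, and the substantive content is the multiplicity identity, which in \cite{AMR} was established up to the ambiguity you call the ``non-uniform'' part and which \cite{Pan3} resolves by a careful analysis of the symbol combinatorics. Your final paragraph correctly identifies this as the crux and correctly defers to \cite{Pan3} for it; but note that this means your proposal is not a self-contained proof --- it is a description of what has to be done together with a pointer to where it is done, which is exactly the stance the paper itself takes.
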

\begin{thm}\cite[Theorem 3.10]{AMR}\label{ReductionWeilUnipotent}
Let $(U_m(q),U_{m'}(q))$ be a unitary dual pair. The bijection
$$
\Irr(U_m(q))_k\times \Irr(U_{m'}(q))_{k'}\simeq \Irr(W_r\times W_{r'}),
$$
identifies the representation $\omega_{m,m',k}$ with the representation $\Omega_{r,r'}$ whose character is :
\begin{align}\label{u1}
\sum_{l=0}^{\min(r,r')}\sum_{\chi\in\Irr(W_l)}(\Ind_{W_l\times W_{r-l}}^{W_r}\chi \otimes 1) \otimes (\Ind_{W_l\times W_{r'-l}}^{W_r'}\sgn\chi \otimes 1),
\end{align}
for the pair $(\U_m(q),\U_{m'}(q))$, if $k$ is odd or $k=k'=0$; and
\begin{align}\label{u2}
\sum_{l=0}^{\min(r,r')}\sum_{\chi\in\Irr(W_l)}(\Ind_{W_l\times W_{r-l}}^{W_r}\chi \otimes \sgn) \otimes (\Ind_{W_l\times W_{r'-l}}^{W_r'}\sgn\chi \otimes 1),
\end{align}
otherwise.
\end{thm}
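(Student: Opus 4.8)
The statement identifies a representation of $\U_m(q)\times\U_{m'}(q)$ (the block projection $\omega_{m,m',k}$) with an explicit representation $\Omega_{r,r'}$ of $W_r\times W_{r'}$, so the plan is to reduce it, via the Howlett--Lehrer isomorphism of Theorem~\ref{How-Leh}, to an identity of characters on $W_r\times W_{r'}$ and then verify that identity class by class. Concretely, Theorem~\ref{How-Leh} makes the block $\mathscr{R}(\U_m(q))_k\otimes\mathscr{R}(\U_{m'}(q))_{k'}$ equivalent to the category of $\mathbb{C}[W_r\times W_{r'}]$-modules, so $\omega_{m,m',k}$ corresponds to a genuine representation of $W_r\times W_{r'}$; it remains to show that its character equals the one written in \eqref{u1}, resp.\ \eqref{u2}. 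I would fix a conjugacy class $(w,w')\in W_r\times W_{r'}$ and compute the value of that character there.

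The key computational input comes from the uniform projection: the Deligne--Lusztig virtual characters $R_{T_w}\boxtimes R_{T_{w'}}$, induced from the twisted tori over the cuspidal anchor $(\lambda_k,\lambda_{k'})$ and indexed by $w\in W_r$ and $w'\in W_{r'}$, span the relevant space of class functions and, under Howlett--Lehrer, correspond to the standard Deligne--Lusztig basis on the Weyl-group side. Hence the character of $\Omega_{r,r'}$ is determined by the inner products $\langle\omega^\flat,\,R_{T_w}\boxtimes R_{T_{w'}}\rangle$. These are explicitly computable: the character of the finite Weil representation at a pair $(g,g')$ is a sign and Gauss-sum factor times a power of $q$ governed by the common fixed space, so the inner products reduce to sums over the tori that factor through the cycle types of $w$ and $w'$; here one must also carry the twist by $\nu_m$ that distinguishes $\omega^\flat$ from $\omega$.

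To recognise the resulting class function as the character of $\Omega_{r,r'}$ I would use the compatibility of the Weil representation with Harish-Chandra restriction as the structural backbone. A maximal Levi of $\U_m(q)$ has the shape $\GL_1(\mathbb{F}_{q^2})\times\U_{m-2}(q)$, and restricting $\omega^\flat_{m,m'}$ to such a Levi (on each side) peels off one torus direction and expresses it through the smaller pair $\omega^\flat_{m-2,m'}$ tensored with explicit $\GL_1(\mathbb{F}_{q^2})$-characters. Iterating, Theorem~\ref{compatibilityHC} keeps the descent inside the unipotent block anchored at $(\lambda_k,\lambda_{k'})$, and Theorem~\ref{CuspidalUnipotentHowe} supplies the base case $r=r'=0$, where the projected Weil representation is exactly $\lambda_k\boxtimes\lambda_{k'}$, matching the trivial representation of $W_0\times W_0$. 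Translating this recursion through Howlett--Lehrer turns each Harish-Chandra induction into an induction $\Ind_{W_l\times W_{r-l}}^{W_r}$ of Weyl-group representations, so the sum over $l$ in \eqref{u1} and \eqref{u2} appears; the inner factor $\sum_{\chi\in\Irr(W_l)}\chi\boxtimes\sgn\chi$ is the shadow of the pairing, through the Weil representation, of the $l$ common $\GL_1(\mathbb{F}_{q^2})$-directions of the two unitary groups, which correspond bijectively and $W_l$-equivariantly up to the $\sgn$-twist recording the Galois/contragredient action.

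The hard part will be the explicit Gauss-sum evaluation of the Weil character on the twisted tori together with the sign bookkeeping needed to pin down the outer twists ($\otimes 1$ versus $\otimes\sgn$) and, in particular, to decide between \eqref{u1} and \eqref{u2}. This dichotomy is governed by the interaction of the twist $\nu_m$ with the parity of $k$ through the sign $(-1)^{k(k+1)/2}$ recorded in Theorem~\ref{CuspidalUnipotentHowe}, and it is precisely here that the two towers $\mathbf{U}^{+}$ and $\mathbf{U}^{-}$ must be separated; the single $\sgn$-twist on the first factor in \eqref{u2} is the visible trace of this choice. Once the sign computation is settled, the final identification reduces to a standard identity for characters of the hyperoctahedral groups, namely the interpretation of $\sum_{\chi\in\Irr(W_l)}\chi\boxtimes\sgn\chi$ as the $\sgn$-twisted diagonal of $\mathbb{C}[W_l]$, completing the match with $\Omega_{r,r'}$.
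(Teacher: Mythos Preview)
The paper does not prove this theorem: it is quoted verbatim as \cite[Theorem 3.10]{AMR} and used as input, with no argument given. So there is no ``paper's own proof'' to compare your proposal against.

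That said, your sketch is a reasonable outline of the strategy actually used in \cite{AMR}: pass through the Howlett--Lehrer equivalence to reduce to a character identity on $W_r\times W_{r'}$, compute the uniform projection of $\omega^\flat$ against Deligne--Lusztig characters $R_{T_w}\boxtimes R_{T_{w'}}$ using the explicit (Gauss-sum) formula for the Weil character on tori, and exploit the compatibility of the Weil representation with Harish-Chandra restriction to set up an induction on $r,r'$ anchored at the cuspidal case of Theorem~\ref{CuspidalUnipotentHowe}. The sign bookkeeping you flag (the $\nu_m$-twist and the parity of $k$ governing the $1$ vs.\ $\sgn$ on the first factor) is indeed the delicate point in \cite{AMR}. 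As a proof sketch this is fine; as a self-contained proof it is not, since the actual Gauss-sum and sign computations are substantial and you have only indicated where they enter, not carried them out.
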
  

\section{Extremal unipotent representations}\label{Extremal_unipotent}
We present symplectic-orthogonal and unitary pairs separately, because the definiton of ``extremal" (i.e.\ ``minimal" and ``maximal") representation changes from one pair to the other. 

There is a natural order on the set of partitions of an integer $n\in\mathbb{N}$. Take $\mu$ and $\mu'$ two partitions of the same integer. Then, $\mu\leq\mu'$ if and only if 
$$
\mu_1+\ldots+\mu_k\leq \mu'_1+\ldots+\mu'_k,\hspace{5pt} \mbox{for all } k\in\mathbb{N}.
$$ 
Following \cite{AMR} we introduce another order between partitions.
\begin{defi}
Let $\mu$ and $\mu'$ be partitions (of possibly different integers). We denote 
$$
\mu\preceq\mu' \mbox{ if and only if } \mu'_{i+1}\leq \mu_i\leq \mu'_i.
$$
\end{defi} 
This defines an order on partitions. It says that $\mu\preceq\mu'$ if the Young diagram of $\mu$ is contained in the one of $\mu'$ and that we can go from the first to the second by adding at most one box per column.

We denote by $\mathscr{P}_2(n)$ the set of bipartitions $(\lambda,\mu)$ of $n$. Irreducible characters of a Weyl group $W_n$ of type $B$ or $C$ are known to be parametrised by bipartitions of $n$ (cf. \cite[Theorem 5.5.6]{Geck-Pfeiffer}). We denote by $\chi_{\lambda,\mu}$ the irreducible representation of $W_n$, corresponding to the bipartition $(\lambda,\mu)$ of $n$.
 \begin{prop}\cite[Chapter 5]{Geck-Pfeiffer}\label{Weyl_representation_tensor}
Let $(\lambda,\mu)$ be a bipartition of the integer $r$, then
\begin{itemize}
\item[1.] $\Ind_{W_r\times W_{l-r}}^{W_l}\chi_{\lambda,\mu}\otimes\sgn = \sum_{\mu\preceq\mu'}\chi_{\lambda,\mu'}.$
\item[2.] $\Ind_{W_r\times W_{l-r}}^{W_l}\chi_{\lambda,\mu}\otimes 1 = \sum_{\lambda\preceq\lambda'}\chi_{\lambda',\mu}.$
\item[3.] $\sgn \otimes \chi_{\lambda,\mu}=\chi_{\mu,\lambda}$
\end{itemize}
\end{prop}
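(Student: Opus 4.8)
The plan is to place the three identities inside the ring of symmetric functions via the characteristic map for the hyperoctahedral tower. First I would realize $W_n$ as the wreath product $(\mathbb{Z}/2\mathbb{Z})^n\rtimes S_n$ and form the graded ring $\mathcal{R}=\bigoplus_{n\geq 0}R(W_n)$ of virtual characters, with product given by induction, $f\cdot g=\Ind_{W_m\times W_n}^{W_{m+n}}(f\otimes g)$ for $f\in R(W_m)$, $g\in R(W_n)$. I would then recall the Frobenius-type characteristic map, an isomorphism of graded rings $\mathrm{ch}\colon\mathcal{R}\xrightarrow{\sim}\Lambda\otimes\Lambda$ onto a tensor square of the ring $\Lambda$ of symmetric functions (Macdonald's formalism for $G\wr S_n$ with $G=\mathbb{Z}/2\mathbb{Z}$, whose two irreducible characters index the two $\Lambda$-factors), under which the induction product becomes the componentwise product and $\chi_{\lambda,\mu}\mapsto s_\lambda\otimes s_\mu$ (Schur functions). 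The key preliminary step is to locate the two linear characters that appear: the trivial character satisfies $\mathrm{ch}(1_{W_m})=h_m\otimes 1$ (so $1_{W_m}=\chi_{(m),\emptyset}$), while the character written $\sgn$ here is the one with $\mathrm{ch}(\sgn_{W_m})=1\otimes h_m$ (so $\sgn_{W_m}=\chi_{\emptyset,(m)}$), where $h_m=s_{(m)}$ is the complete homogeneous symmetric function.

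With this dictionary, parts 1 and 2 become a single application of the Pieri rule. For part 2, since $\mathrm{ch}$ is a ring homomorphism,
$$
\mathrm{ch}\big(\Ind_{W_r\times W_{l-r}}^{W_l}\chi_{\lambda,\mu}\otimes 1\big)=(s_\lambda\otimes s_\mu)(h_{l-r}\otimes 1)=(s_\lambda h_{l-r})\otimes s_\mu,
$$
and the Pieri rule $s_\lambda h_{l-r}=\sum_{\lambda\preceq\lambda'}s_{\lambda'}$, the sum running over $\lambda'$ with $\lvert\lambda'\rvert=\lvert\lambda\rvert+(l-r)$ whose skew diagram $\lambda'/\lambda$ is a horizontal strip, translates \emph{verbatim} into the order $\preceq$ introduced above (adding at most one box per column). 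Applying $\mathrm{ch}^{-1}$ yields part 2, and part 1 is the identical computation performed on the second tensor factor, using $\mathrm{ch}(\sgn_{W_{l-r}})=1\otimes h_{l-r}$ together with $s_\mu h_{l-r}=\sum_{\mu\preceq\mu'}s_{\mu'}$.

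For part 3 I would argue directly by Clifford theory. Writing $\chi_{\lambda,\mu}=\Ind_{W_a\times W_b}^{W_n}\big(\widetilde{\chi_\lambda}\otimes\sgn_{W_b}\,\widetilde{\chi_\mu}\big)$ with $a=\lvert\lambda\rvert$, $b=\lvert\mu\rvert$ (the tilde denoting inflation from $S$ to $W$), the projection formula $\psi\otimes\Ind_H^G(-)=\Ind_H^G(\psi|_H\otimes -)$ for a linear character $\psi$, combined with $\sgn_{W_n}|_{W_a\times W_b}=\sgn_{W_a}\otimes\sgn_{W_b}$ and $\sgn_{W_b}^{\otimes 2}=1$, gives $\sgn_{W_n}\otimes\chi_{\lambda,\mu}=\Ind(\sgn_{W_a}\widetilde{\chi_\lambda}\otimes\widetilde{\chi_\mu})=\chi_{\mu,\lambda}$, since induction is insensitive to the order of the conjugate factors $W_a\times W_b$ and $W_b\times W_a$; equivalently, under $\mathrm{ch}$ this twist is exactly the exchange $s_\lambda\otimes s_\mu\mapsto s_\mu\otimes s_\lambda$ of the two factors. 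This same projection formula also shows that parts 1 and 2 are equivalent, so proving either one suffices. The main obstacle will be purely one of conventions: I must verify that in the chosen labelling of $\Irr(W_n)$ the trivial and $\sgn$ characters really sit on the first and second Schur factors as above, so that the Pieri rule produces \emph{horizontal} strips — the order $\preceq$ — rather than its conjugate (vertical strips), and that $\sgn$ induces a pure swap $\chi_{\lambda,\mu}\mapsto\chi_{\mu,\lambda}$ with no transposition of partitions. Once the normalization of $\mathrm{ch}$ and the bipartition indexing are pinned down so these match, the three formulas follow from the classical branching results recorded in \cite[Chapter 5]{Geck-Pfeiffer}.
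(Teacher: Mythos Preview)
Your argument is correct. Note, however, that the paper does not actually prove this proposition: it is stated with a bare citation to \cite[Chapter 5]{Geck-Pfeiffer} and used as a black box, so there is no ``paper's own proof'' to compare against. What you have written is a clean self-contained derivation via the characteristic map for $(\mathbb{Z}/2\mathbb{Z})\wr S_n$ and the Pieri rule, which is exactly the machinery underlying the cited reference.

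One small remark on your closing caveat about conventions: you do not need to leave this as an open obstacle. Part~3 of the statement itself pins down which linear character $\sgn$ denotes, since $\sgn\otimes\chi_{\lambda,\mu}=\chi_{\mu,\lambda}$ (pure swap, no transpose) forces $\sgn=\chi_{\emptyset,(m)}$ rather than the full sign $\chi_{\emptyset,(1^m)}$. With that fixed, $\mathrm{ch}(\sgn_{W_m})=1\otimes h_m$ is determined, and the Pieri rule necessarily produces horizontal strips, i.e.\ the order $\preceq$ as defined. So the normalization is not an assumption but a consequence of the statement you are proving.
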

Achar and Henderson \cite{Achar-Henderson} introduced the following order between bipartitions.
\begin{defi}\label{lexicographical_order}
 For $(\rho,\sigma)$, $(\mu,\nu)\in\mathscr{P}_2(n)$ we say that $(\rho,\sigma)\leq(\mu,\nu)$, if and only if, the following inequalities hold for all $k\geq 0$ :
\begin{align*}
\rho_1+\sigma_1+\cdots+\rho_k+\sigma_k & \leq\mu_1+\nu_1+\cdots+\mu_k+\nu_k, \mbox{ and } \\
\rho_1+\sigma_1+\cdots+\rho_k+\sigma_k+\rho_{k+1} & \leq\mu_1+\nu_1+\cdots+\mu_k+\nu_k+\mu_{k+1}.
\end{align*}
We will refer to this as the \emph{natural order}.
\end{defi}
The following technical result will be used for both unitary and symplectic-orthogonal pairs.
\begin{lem}\label{TechnicalLemmaSOandUpairs}
Let $(\xi,\eta)$, and $(\xi',\eta')$ be bipartitions of $l$, and $l'$ respectively. Suppose the number of parts of $\eta$ and $\eta'$ are equal, and that $\xi$ has one more part than $\xi'$ (this can be achieved by adding zeroes). Call $t_0$ (resp. $t_1$) the number of parts of $\eta'$ (resp. $\xi'$). Finally suppose that $\xi'\preceq\xi$, and $\eta\preceq\eta'$.
\begin{itemize}
\item[1.]  For $P\subset \{1,\ldots,t_0\}$, and $Q\subset \{1,\ldots,t_1\}$,
$$
\xi_1+\sum_Q\xi_{i+1}+\sum_P\eta_i\geq (l-l') +\sum_Q\xi'_i+\sum_P\eta'_i.
$$
\item[2.] For $P\subset \{1,\ldots,t_0\}$, and $Q\subset \{1,\ldots,t_1+1\}$,
$$
\sum_Q\xi_i+\sum_P\eta_i\leq l-l' +\eta'_1+ \sum_Q\xi'_i + \sum_P\eta'_{i+1},
$$
where we set $\xi_{t_1+1}=\eta'_{t_0+1}=0$.
\end{itemize}
\end{lem}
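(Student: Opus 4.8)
The plan is to handle both parts by the same device. In each case I move every term to one side, so that the asserted inequality takes the shape $F(P,Q)\ge 0$, where $F$ is a constant (depending on $\xi,\eta,\xi',\eta',l,l'$ but not on $P,Q$) plus a sum of terms indexed by the elements of $P$ and of $Q$. The point is that, thanks to the hypotheses $\xi'\preceq\xi$ and $\eta\preceq\eta'$, every one of those indexed terms has a fixed sign; hence $F$ attains its extreme value at one specific choice of $(P,Q)$, and there the sum telescopes and, using $|\xi|+|\eta|=l$ and $|\xi'|+|\eta'|=l'$, collapses to exactly $0$. So the inequalities are in fact equalities in the worst case.

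Concretely, for part (1) the claim is equivalent to
$$
F_1(P,Q)\;=\;\xi_1-(l-l')+\sum_{i\in Q}(\xi_{i+1}-\xi'_i)+\sum_{i\in P}(\eta_i-\eta'_i)\;\ge\;0.
$$
From $\xi'\preceq\xi$ we get $\xi_{i+1}\le\xi'_i$ for $1\le i\le t_1$, and from $\eta\preceq\eta'$ we get $\eta_i\le\eta'_i$ for $1\le i\le t_0$; thus each summand is $\le 0$, so $F_1$ is smallest when $P=\{1,\ldots,t_0\}$ and $Q=\{1,\ldots,t_1\}$. For that choice $\xi_1+\sum_{i=1}^{t_1}\xi_{i+1}=|\xi|$, $\sum_{i=1}^{t_1}\xi'_i=|\xi'|$, $\sum_{i=1}^{t_0}\eta_i=|\eta|$ and $\sum_{i=1}^{t_0}\eta'_i=|\eta'|$, whence $F_1=|\xi|+|\eta|-|\xi'|-|\eta'|-(l-l')=0$.

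Part (2) is identical in spirit. Rewriting, the claim becomes
$$
F_2(P,Q)\;=\;(l-l')+\eta'_1+\sum_{i\in Q}(\xi'_i-\xi_i)+\sum_{i\in P}(\eta'_{i+1}-\eta_i)\;\ge\;0,
$$
and now $\xi'_i\le\xi_i$ and $\eta'_{i+1}\le\eta_i$ make every summand $\le 0$ (the conventions $\xi_{t_1+1}=0$ and $\eta'_{t_0+1}=0$, together with the automatic $\xi'_{t_1+1}=0$, cover the boundary indices $i=t_1+1$ and $i=t_0$). So $F_2$ is smallest at $P=\{1,\ldots,t_0\}$, $Q=\{1,\ldots,t_1+1\}$, where $\sum_{i=1}^{t_1+1}\xi'_i=|\xi'|$, $\sum_{i=1}^{t_1+1}\xi_i=|\xi|$, $\sum_{i=1}^{t_0}\eta'_{i+1}=|\eta'|-\eta'_1$ and $\sum_{i=1}^{t_0}\eta_i=|\eta|$; substituting gives $F_2=(l-l')+|\xi'|+|\eta'|-|\xi|-|\eta|=0$.

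The mathematics here is light; the only thing demanding attention is the index bookkeeping — tracking which of $\xi,\eta,\xi',\eta'$ has how many parts, how the zero-padding in the hypothesis meshes with the ranges over which $P$ and $Q$ run, and checking that each telescoping sum recovers precisely $|\xi|,|\xi'|,|\eta|$ or $|\eta'|$ and no spurious extra term. That accounting, rather than any inequality, is where a slip would occur.
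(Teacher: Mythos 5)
Your argument is correct and is essentially the paper's proof: both rest on the identity $\xi_1+\sum_{i=1}^{t_1}(\xi_{i+1}-\xi'_i)+\sum_{i=1}^{t_0}(\eta_i-\eta'_i)=l-l'$ (and its analogue for item 2) together with the fact that each bracketed term is $\leq 0$ by $\xi'\preceq\xi$ and $\eta\preceq\eta'$, so passing to subsets $P,Q$ only moves the sum in the claimed direction; phrasing this as minimizing $F_1,F_2$ over $(P,Q)$ is the same computation. Your explicit treatment of item 2, including the boundary terms $\xi'_{t_1+1}=\eta'_{t_0+1}=0$, correctly fills in what the paper dismisses as ``similar.''
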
   
\begin{proof}
By definition,
\begin{align}\label{eq_small_so2}
\xi_{i+1}\leq \xi'_i\leq\xi_i, \mbox{ and } \eta_j\leq\eta'_j
\end{align}
for all $i=1,\ldots,t_1$, and $j=1\ldots,t_0$. Rewriting $\lvert\xi\rvert+\lvert\eta\rvert - \lvert\xi'\rvert-\lvert\eta'\rvert = l-l'$ as
$$
\xi_1+\sum_{i=1}^{t_1}(\xi_{i+1}-\xi'_i)+\sum_{i=1}^{t_0}(\eta_i-\eta'_i)=l-l',
$$ 
inequalities (\ref{eq_small_so2}) imply
\begin{align*}
\xi_1+\sum_Q(\xi_{i+1}-\xi'_i)+\sum_P(\eta_i-\eta'_i)\geq l-l',
\end{align*}
for $P\subset \{1,\ldots,t_0\}$, and $Q\subset \{1,\ldots,t_1\}$ arbitrary. The proof of item (2) is similar.
\end{proof}

\subsection{Symplectic-orthogonal pairs}
Let $W_n=W(C_n)$ be the Weyl group of $\Sp_{2n}(\overline{\mathbb{F}}_q)$. In \cite{Lusztig} Lusztig extended the Springer correspondence (introduced in \cite{Springer}) to finite fields of large characteristic. This correspondence is an injective map from the set of irreducible representations of $W_n$, into the set of pairs $(\mathcal{O},\psi)$, where $\mathcal{O}$ is a unipotent conjugacy class of $\Sp_{2n}(\overline{\mathbb{F}}_q)$, and $\psi$ is an irreducible character of the group $A(u)$ of connected components of the centraliser $C(u)$ of any $u\in\mathcal{O}$. 
 
Recall that a partition is called \emph{symplectic} if each odd part appears with even multiplicity. There is a bijection between symplectic partitions of $2n$ and unipotent conjugacy classes of $\Sp_{2n}(\overline{\mathbb{F}}_q)$. We denote by $\mathcal{O}_\lambda$ the unipotent orbit associated to the symplectic partition $\lambda$.  
 
Let $\lambda$ be a symplectic partition of $2n$. By adding a zero if necessary, we can suppose $\lambda$ has an even number $2k$ of parts. We now set $\lambda_j^*=\lambda_{2k-j+1}+j-1$, for $j=1,\ldots,2k$. We divide $\lambda^*$ into its odd and even parts. Is has the same number of each. Let the odd parts be
$$
2\xi_1^*+1<2\xi_2^*+1<\ldots<2\xi_k^*+1
$$
and the even parts be
$$
2\eta_1^*<2\eta_2^*<\ldots<2\eta_k^*.
$$ 
Set $\xi_i=\xi_{k-i+1}^*-(k-i)$, and $\eta_i=\eta_{k-i+1}^*-(k-i)$ for each $i$. We obtain in this way a bipartition $(\xi,\eta)=(\xi(\lambda),\eta(\lambda))$ of $n$. The injective map, sending $\lambda$ to $(\xi(\lambda),\eta(\lambda))$, is closely related to the Springer correspondence. 
 
Conversely, let $(\xi,\eta)$ be a bipartition of $n$, we  ensure that $\xi$ has one more part than $\eta$ by adding zeroes to $\xi$ if necessary. Let $k$ the number of parts of $\eta$. We associate to $(\xi,\eta)$ the following \emph{$u$-symbol}
$$
\begin{pmatrix}
\xi_{k+1} \hspace{0.3cm} \xi_k + 2 \hspace{0.2cm} \cdots  \hspace{0.2cm} \xi_1 + 2k\\
\hspace{0.2cm} \eta_k +1 \hspace{0.2cm} \cdots \hspace{0.2cm} \eta_1 +2k-1
\end{pmatrix}.
$$ 
The bipartition $(\xi,\eta)$ is in the image of the above map if and only if its associated $u$-symbol is \emph{distinguished}, that is
$$
\xi_{k+1} \leq \eta_k +1 \leq \xi_k +2 \leq \eta_{k-1} +3\leq\cdots.
$$
In this situation the Springer map sends the representation $\chi_{\xi,\eta}$ of $W_n$ to the pair $(\mathcal{O}_\lambda, 1)$ where $\lambda$ is the symplectic partition such that $(\xi,\eta)=(\xi(\lambda),\eta(\lambda))$, and $1$ is the trivial representation of $A(u)$. 

The set of all $u$-symbols which share the same entries with the same multiplicities (in different arrangements) is called a \emph{similarity class}. Each similarity class contains exactly one distinguished $u$-symbol. 

Suppose the bipartition $(\xi,\eta)$ is not in the image of the above map, and let $(\xi',\eta')$ be the bipartition corresponding to the distinguished $u$-symbol that is similar to the one of $(\xi,\eta)$. Let $\lambda$ be the symplectic partition verifying $(\xi',\eta')=(\xi(\lambda),\eta(\lambda))$, then the Springer correspondence maps $\chi_{\xi,\eta}$ into the pair $(\mathcal{O}_\lambda, \psi)$ for some character $\psi$ of $A(u)$. In this situation we write $\lambda=\lambda(\xi,\eta)$. This defines a surjective map sending bipartitions of $n$, to symplectic partitions of $2n$.
\begin{rmk}\label{ClosureOrder}
The \emph{closure order} between unipotent conjugacy classes is compatible with the (natural) order on symplectic partitions.
\end{rmk}
For the symplectic-orthogonal pair $(\Sp_{2m}(q),\Or^\epsilon_{2m'}(q))$, the Howe correspondence between unipotent Harish-Chandra series induces a correspondence for the pair of type $\mathbf{B}$ Weyl groups $(W_{m-k(k+1)}, W_{m'-\theta(k)^2})$, where $\theta(k)$ is equal to $k$ or $k+1$ (see Theorem \ref{CuspidalUnipotentHowe}). 

Let $l=m-k(k+1)$ and $l'=m'-\theta(k)^2$. Theorem \ref{ConjectureSymplectic-Orthogonal} asserts that the correspondence between Weyl groups is given by a certain representation $\Omega_{l,l'}$. 
\begin{defi}\label{def_min_so}
Fix a bipartition $(\xi',\eta')$ of $l'$. 
\begin{itemize}
\item[1.] We denote by $\Theta(\xi',\eta')$ the set of all bipartitions $(\xi,\eta)$ of $l$ such that $\chi_{\xi,\eta}\otimes\chi_{\xi',\eta'}$ is an irreducible component of $\Omega_{l,l'}$.
\item[2.]  A \emph{minimal representation} $(\xi_{\mathrm{min}},\eta_{\mathrm{min}})$  in $\Theta(\xi',\eta')$, is the one verifying $\lambda(\xi_{\mathrm{min}},\eta_{\mathrm{min}})\leq \lambda(\xi,\eta)$ for all $(\xi,\eta)$ in $\Theta(\xi',\eta')$. The definition of \emph{maximal representation} is similar.
\end{itemize}
\end{defi}
Note that a priori, these extremal representations need not exist and even if they do, they need not be unique. Our goal is to prove the existence and uniqueness of such representations. 

The explicit form of $\Omega_{l,l'}$ depends on the sign of $\epsilon$, and the parity of $k$. Theorem \ref{ConjectureSymplectic-Orthogonal} presented two cases, we study these independently.

We suppose from now on that $m\geq 2m'$, i.e.\ that the dual pair $(\Sp_{2m}(q),\Or^\epsilon_{2m'}(q))$ is in the stable range (with $\Or^\epsilon_{2m'}(q)$ smaller).  

\subsubsection{First case}
Consider the dual pair $(\Sp_{2m}(q),\Or^+_{2m'}(q))$ for $k$ even, or $(\Sp_{2m}(q),\Or^-_{2m'}(q))$ for $k$ odd. In these cases, Proposition \ref{Weyl_representation_tensor} allows us to rewrite the representation $\Omega_{l,l'}$, given in Theorem \ref{ConjectureSymplectic-Orthogonal} as
\begin{align}\label{so1bis}
\Omega_{l,l'} = \sum_{r=0}^{\min(l,l')}\sum_{(\xi,\zeta)\in\mathscr{P}_2(r)}\sum_{\eta,\eta'}\chi_{\xi,\eta}\otimes\chi_{\xi,\eta'},
\end{align}
where the third sum is over partitions $\eta$ and  $\eta'$ of $l-\lvert\xi\rvert$ and $l'-\lvert\xi\rvert$ such that $\zeta\preceq\eta$ and $\zeta\preceq\eta'$. 
\begin{lem}\label{small_so1}
Let $(\xi',\eta')$ be a bipartition of $l'$.
\begin{itemize}
\item[1.] The bipartition $(\xi,\eta)$ belongs to $\Theta(\xi',\eta')$ if and only if $\eta$ and $\eta'$ have a common predecessor for $\preceq$, and $\xi=\xi'$.
\item[2.] The smallest element of $\Theta(\xi',\eta')$ for the natural order corresponds to $(\xi',(l-l')\cup\eta')$
\item[3.] The largest element of $\Theta(\xi',\eta')$ for the natural order corresponds to $(\xi', (l-l'+\eta'_1+\eta'_2,\eta'_3,\cdots,\eta'_r))$.
\end{itemize}
\end{lem}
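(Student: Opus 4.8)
The plan is to dispose of item (1) by directly unpacking the character formula \eqref{so1bis}, and then to reduce items (2) and (3), after one elementary observation about the natural order, to two short ``telescoping'' estimates for the dominance order on partitions.

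For (1) I would read the condition straight off \eqref{so1bis}: the term $\chi_{\xi,\eta}\otimes\chi_{\xi',\eta'}$ occurs there iff there are an integer $r$, a bipartition $(\alpha,\zeta)$ of $r$, and partitions $\gamma\vdash l-|\alpha|$, $\gamma'\vdash l'-|\alpha|$ with $\zeta\preceq\gamma$ and $\zeta\preceq\gamma'$, such that $(\xi,\eta)=(\alpha,\gamma)$ and $(\xi',\eta')=(\alpha,\gamma')$. Matching first coordinates forces $\xi=\xi'$, the remaining conditions say exactly that $\zeta$ is a common $\preceq$-predecessor of $\eta$ and $\eta'$, and the size constraint $r\le\min(l,l')$ is automatic since $\zeta\preceq\eta$ already gives $|\zeta|\le|\eta|$. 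This proves (1); in particular every element of $\Theta(\xi',\eta')$ has the form $(\xi',\eta)$ with $|\eta|=l-|\xi'|=|\eta'|+(l-l')$.

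For (2) and (3) I would first record that, once the first component is pinned to $\xi'$, the natural order of Definition \ref{lexicographical_order} on the bipartitions $(\xi',\eta)$ collapses to the usual dominance order on the second components $\eta$ (of common size $l-|\xi'|$): in each defining inequality the $\xi'$-contributions cancel from both sides. So the task is to locate the dominance-smallest and dominance-largest partition $\eta$ of size $|\eta'|+(l-l')$ admitting a common $\preceq$-predecessor with $\eta'$; a priori such extrema need not exist, so the content is to exhibit explicit candidates that dominate (resp. are dominated by) every competitor. For the smallest I would take $\eta^{\min}=(l-l')\cup\eta'$: the stable-range hypothesis $m\ge 2m'$ (with $\theta(k)=k$ in this case and $|\eta'|\le l'$) forces $l-l'\ge\eta'_1$, so $\eta^{\min}$ is a genuine partition with largest part $l-l'$ and $\eta'$ itself is a common predecessor of $\eta^{\min}$ and $\eta'$, whence $(\xi',\eta^{\min})\in\Theta(\xi',\eta')$ by (1). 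Minimality is then the telescoping estimate: for any $(\xi',\eta)\in\Theta(\xi',\eta')$ with witness $\zeta$ (so $\eta_{i+1}\le\zeta_i\le\eta_i$ and $\zeta_i\le\eta'_i$), summing $\eta_{j+1}\le\zeta_j\le\eta'_j$ over $j\ge k$ gives $\sum_{i>k}\eta_i\le\sum_{j\ge k}\eta'_j$, hence $\sum_{i\le k}\eta_i\ge(l-l')+\sum_{j<k}\eta'_j=\sum_{i\le k}\eta^{\min}_i$ for every $k$. For the largest I would take $\eta^{\max}=(l-l'+\eta'_1+\eta'_2,\eta'_3,\eta'_4,\ldots)$, verify $(\xi',\eta^{\max})\in\Theta(\xi',\eta')$ via the common predecessor $\zeta=(\eta'_2,\eta'_3,\eta'_4,\ldots)$, and run the dual estimate: from $\eta'_{i+1}\le\zeta_i\le\eta_i$, summing over $i\ge k+1$ gives $\sum_{i\ge k+2}\eta'_i\le\sum_{i>k}\eta_i$, which rearranges to $\sum_{i\le k}\eta_i\le(l-l')+\sum_{i\le k+1}\eta'_i=\sum_{i\le k}\eta^{\max}_i$.

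Everything outside these two estimates is short: the cancellation in the natural-order inequalities, checking that $\eta^{\min}$ and $\eta^{\max}$ are partitions, and verifying that the displayed $\zeta$'s satisfy both required $\preceq$-relations (watching zero-padding and the case of a short $\eta'$). The step I expect to need the most care, though it remains modest, is precisely this bookkeeping for the candidate extremal partitions: confirming $l-l'\ge\eta'_1$ in the stable range so that $\eta^{\min},\eta^{\max}$ are honest partitions, and confirming that $\eta'$ (resp. $(\eta'_2,\eta'_3,\ldots)$) really is a common predecessor. Note that Definition \ref{def_min_so} (extremality via $\lambda(\cdot,\cdot)$ and the closure order) is not invoked here — that comparison enters only when this lemma is combined with Remark \ref{ClosureOrder} — so the proof stays entirely at the level of the dominance order on partitions.
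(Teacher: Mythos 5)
Your proposal is correct and follows essentially the same route as the paper: item (1) is read directly off equality (\ref{so1bis}), and items (2) and (3) are proved by the same telescoping dominance estimates obtained from the common-predecessor inequalities $\eta_{i+1}\leq\zeta_i\leq\eta'_i$ (resp.\ $\eta'_{i+1}\leq\zeta_i\leq\eta_i$) together with the size identity $\lvert\eta\rvert-\lvert\eta'\rvert=l-l'$ and the stable-range bound $l\geq 2l'$. The only difference is that you spell out details the paper leaves implicit (the collapse of the natural order to dominance on second components, and the membership of the two candidate extremal bipartitions via explicit common predecessors), which is harmless.
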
 
\begin{proof}
Item $1$ is an easy consequence of equality (\ref{so1bis}).

The representations belonging to $\Theta(\xi',\eta')$ correspond to bipartitions having $\xi'$ as first component, and whose second component $\eta$ shares a common predecessor (for $\preceq$) with $\eta'$. Thus, it is enough to prove that the smallest partition (for the natural order) verifying this property, is $(l-l')\cup\eta'$. The number $l-l'$ is the biggest element in this partition, because the stable range condition $m\geq 2m'$ implies that $l\geq 2l'$.
 
Let $\eta'=(\eta'_1,\ldots,\eta'_r)$, $\eta=(\eta_1,\ldots,\eta_{r+1})$, and $\zeta=(\zeta_1,\ldots,\zeta_r)$ be such that $\eta'$, and $\eta$ have $\zeta$ as common predecessor for $\preceq$. By adding zeros, we can suppose that $\eta$ has one more part than $\zeta$ and $\eta'$). By definition
$$
\eta'_{k+1} \leq \zeta_k \leq \eta'_k, \mbox{, and }\eta_{k+1} \leq \zeta_k \leq \eta_k,
$$
for $k=1,\ldots,r$. Hence, 
\begin{align}\label{ineq_small_so1}
\eta'_k\geq\eta_{k+1} \mbox{ for } k=1,\ldots,r.
\end{align}
As $(\xi,\eta)$ and $(\xi,\eta')$ are bipartitions of $l$ and $l'$ respectively, $\lvert\eta\rvert-\lvert\eta'\rvert=l-l'$, i.e.\
\begin{align}\label{eq_small_so1}
\sum_{i=1}^{r+1}\eta_i &= l-l' + \sum_{i=1}^r \eta'_i. 
\end{align}
This equality and the inequalities in (\ref{ineq_small_so1}) provide
$$
\sum_{i=1}^{k+1}\eta_i \geq l-l' + \sum_{i=1}^k \eta'_i, 
$$
for $k=0,\ldots,r$, i.e.\ $\eta\geq (l-l')\cup\eta'$. This proves item $2$. The proof of item $3$ is analogous. 
\end{proof}
The following result states that the map $\lambda=\lambda(\xi,\eta)$ is increasing when restricted to $\Theta(\xi',\eta')$. 
\begin{prop}\label{prop_springer_ordre_so1} Suppose that $(\xi',\zeta)$ and $(\xi',\zeta')$ belong to $\Theta(\xi',\eta')$. Then, $(\xi',\zeta)\leq(\xi',\zeta')$ if and only if $\lambda(\xi',\zeta)\leq \lambda(\xi',\zeta')$. 
\end{prop}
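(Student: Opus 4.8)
The plan is to reduce the proposition, via the $u$-symbol description of the map $\lambda(\xi,\eta)$, to a purely combinatorial comparison of dominance orders. By Lemma \ref{small_so1}(1) every element of $\Theta(\xi',\eta')$ has first component $\xi'$, so I write the two given bipartitions as $(\xi',\zeta)$ and $(\xi',\zeta')$, with $\zeta,\zeta'$ partitions of $l-\lvert\xi'\rvert$ which I pad with zeros to a common number $k$ of parts. Inserting $(\xi',\zeta)$ and $(\xi',\zeta')$ into Definition \ref{lexicographical_order}, every $\xi'$-contribution cancels from each inequality, so $(\xi',\zeta)\le(\xi',\zeta')$ in the natural order is equivalent to $\zeta\le\zeta'$ in the dominance order on partitions of $l-\lvert\xi'\rvert$. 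Hence it suffices to show: $\zeta\le\zeta'$ if and only if $\lambda(\xi',\zeta)\le\lambda(\xi',\zeta')$, where on the right is the natural (equivalently, by Remark \ref{ClosureOrder}, closure) order on symplectic partitions of $2l$.

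Next I translate the right-hand side to $u$-symbols. After padding $\xi'$ to $k+1$ parts, the top row of the $u$-symbol of $(\xi',\zeta)$ depends only on $\xi'$ and $k$; call it $A$. Its bottom row is $B(\zeta)=(\zeta_1+2k-1,\zeta_2+2k-3,\dots,\zeta_k+1)$, i.e.\ $\zeta$ with a fixed odd staircase added. The partition $\lambda(\xi',\zeta)$ depends only on the similarity class of this $u$-symbol, that is, only on the multiset $C(\zeta):=A\sqcup B(\zeta)$; and the passage from the distinguished $u$-symbol of that class back to a symplectic partition is the usual ``$\beta$-set to partition'' map, which is an order isomorphism for the dominance orders. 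Consequently $\lambda(\xi',\zeta)\le\lambda(\xi',\zeta')$ is equivalent to $C(\zeta)\le C(\zeta')$ in the dominance order on multisets (partial sums of the $t$ largest entries, for every $t$), so the task becomes: $\zeta\le\zeta'\iff C(\zeta)\le C(\zeta')$.

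The heart of the argument is that the fixed set $A$ plays no role. Since $B(\zeta)$ is $\zeta$ plus a fixed staircase, its partial sums differ from those of $\zeta$ by a constant and $B(\zeta),B(\zeta')$ have equal cardinality and equal sum, so $\zeta\le\zeta'$ is equivalent to $B(\zeta)\le B(\zeta')$. For two multisets of equal size and equal sum, $M\le M'$ is equivalent to $\sum_{m\in M}(m-x)^+\le\sum_{m\in M'}(m-x)^+$ for every real $x$ (the majorization criterion: the convex functions $t\mapsto(t-x)^+$ generate the convex cone of convex functions modulo affine ones, and the affine part is pinned down by the common sum). Adjoining $A$ adds the same quantity $\sum_{a\in A}(a-x)^+$ to both sides of this inequality, and $A\sqcup B(\zeta),A\sqcup B(\zeta')$ again have equal size and sum; hence the criterion holds for $B(\zeta),B(\zeta')$ if and only if it holds for $C(\zeta),C(\zeta')$. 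Chaining the equivalences proves the proposition. Unwound, this last step is a family of partial-sum inequalities in the spirit of Lemma \ref{TechnicalLemmaSOandUpairs}.

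I expect the genuine obstacle to be the $u$-symbol bookkeeping rather than this analytic step. One must check that padding $\zeta,\zeta'$ and $\xi'$ to a common length does not alter $\lambda(\xi',\cdot)$ --- this is the shift-invariance of similarity classes of $u$-symbols --- and, more delicately, that the map from a distinguished $u$-symbol with possibly repeated entries to its symplectic partition really is an order embedding for the dominance order; handling multiplicities correctly is the place where care is needed, and it may be cleanest to quote this order-compatibility from the literature on the Springer correspondence rather than reprove it here.
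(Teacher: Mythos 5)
Your reduction steps coincide with the paper's: both proofs first observe that on $\Theta(\xi',\eta')$ the natural order is just dominance of the second components $\zeta\leq\zeta'$, and both translate $\lambda(\xi',\zeta)\leq\lambda(\xi',\zeta')$ into dominance of the sorted entries of the similar distinguished $u$-symbols (the paper also takes this translation as ``easy to see,'' so your proposal to quote it is on par with the source). Where you genuinely diverge is the core equivalence $\zeta\leq\zeta'\iff C(\zeta)\leq C(\zeta')$ with $C(\zeta)=A\sqcup B(\zeta)$: the paper argues directly on partial sums of the sorted symbols, writing each partial sum of $\gamma$ (resp.\ $\gamma'$) as a mixed sum of top-row and bottom-row entries and using distinguishedness; in particular its converse direction picks $r$ minimal so that the first $t'$ bottom entries of the $\zeta'$-symbol occur among the $r$ largest and then runs a case analysis (the case $s\geq s'$) to compare the two mixed sums. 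You instead invoke the majorization criterion: for equal-size, equal-sum multisets, dominance is equivalent to the family of inequalities $\sum_m \max(m-x,0)\leq\sum_{m'}\max(m'-x,0)$ over all $x$, and since adjoining the fixed top row $A$ adds the same quantity to both sides while preserving equal size and sum, dominance for $B(\zeta),B(\zeta')$ (equivalently for $\zeta,\zeta'$, as $B$ is $\zeta$ plus a fixed staircase) transfers to and from dominance for $C(\zeta),C(\zeta')$. This is correct, treats both directions symmetrically, and avoids the bookkeeping about which entries occur among the $r$ largest; what it costs is an appeal to the Hardy--Littlewood--P\'olya/Karamata characterization of majorization, whereas the paper's argument, though more delicate (its converse is carried out only in the case $s\geq s'$), stays entirely within elementary partial-sum manipulations of the symbols. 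Your flagged caveats (shift-invariance under padding, order-compatibility of the passage from distinguished symbols to symplectic partitions) are exactly the points the paper also leaves implicit, so they do not constitute a gap relative to the source.
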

\begin{proof}
By adding zeros we can suppose that the $\zeta$ and $\zeta'$ have the same number $k$ of parts, and that $\xi'$ has one more part than both. Let   
$$
\begin{pmatrix}
\xi'_{k+1} \hspace{0.3cm} \xi'_k + 2 \hspace{0.2cm} \cdots  \hspace{0.2cm} \xi'_1 + 2k\\
\hspace{0.2cm} \zeta_k +1 \hspace{0.2cm} \cdots \hspace{0.2cm} \zeta_1 +2k-1
\end{pmatrix}
\mbox{, and }
\begin{pmatrix}
\xi'_{k+1} \hspace{0.3cm} \xi'_k + 2 \hspace{0.2cm} \cdots  \hspace{0.2cm} \xi'_1 + 2k\\
\hspace{0.2cm} \zeta'_k +1 \hspace{0.2cm} \cdots \hspace{0.2cm} \zeta'_1 +2k-1
\end{pmatrix}
$$
be the $u$-symbols corresponding to $(\xi',\zeta)$, and $(\xi',\zeta')$ respectively. Let
$$
\begin{pmatrix}
\gamma_{2k+1} \hspace{0.3cm} \gamma_{2k-1}  \hspace{0.2cm} \cdots  \hspace{0.2cm} \gamma_1\\
\hspace{0.2cm} \gamma_{2k} \hspace{0.2cm} \cdots \hspace{0.2cm} \gamma_2
\end{pmatrix}
\mbox{, and }
\begin{pmatrix}
\gamma'_{2k+1} \hspace{0.3cm} \gamma'_{2k-1}  \hspace{0.2cm} \cdots  \hspace{0.2cm} \gamma'_1\\
\hspace{0.2cm} \gamma'_{2k} \hspace{0.2cm} \cdots \hspace{0.2cm} \gamma'_2
\end{pmatrix}
$$
be their associated distinguished $u$-symbols. It is easy to see that $\lambda(\xi',\zeta)\leq \lambda(\xi',\zeta')$, is equivalent to $\gamma\leq\gamma'$, where $\gamma=(\gamma_1,\ldots,\gamma_{2k+1})$ and $\gamma=(\gamma_1,\ldots,\gamma_{2k+1})$. 

Supposse that $(\xi',\zeta)\leq(\xi',\zeta')$, or equivalently, that $\zeta\leq\zeta'$. We must verify that $\gamma\leq\gamma'$, i.e.\
$$
\gamma_1+\cdots+\gamma_r \leq \gamma'_1+\cdots+\gamma'_r.
$$
for $r=1,\ldots,ék+1$. The sum to the left above can be written as
$$
\sum_{i=1}^t\zeta_i+2(k-i)+1+\sum_{i=1}^s\xi'_i+2(k-i+1),
$$
for some $t$, and $s$ such that $t+s=r$. The relation $\zeta\leq\zeta'$ implies that the sum above is lower or equal than
$$
\sum_{i=1}^t\zeta'_i+2(k-i)+1+\sum_{i=1}^s\xi'_i+2(k-i+1).
$$
This last sum in lower or equal than $\gamma'_1+\cdots+\gamma'_r$ (because of distinguishedness).

Conversely, supposse that $\gamma\leq\gamma'$. Let $t'=1,\ldots,k$, and choose $r$ minimal such that $\zeta'_1+2(k-1)+1,\ldots,\zeta'_{t'}+2(k-t')+1$ appear in the list $\gamma'_1,\ldots,\gamma'_r$. In this case 
\begin{align}\label{gammaprime}
\gamma'_1+\cdots+\gamma'_r = \sum_{i=1}^{t'}\zeta'_i+2(k-i)+1+\sum_{i=1}^{s'}\xi'_i+2(k-i+1),
\end{align}
where $s'=r-t'$. The previous sum is greater or equal than $\gamma_1+\cdots+\gamma_r$, which in turn can be written as 
\begin{align}\label{gamma}
\gamma_1+\cdots+\gamma_r = \sum_{i=1}^t\zeta_i+2(k-i)+1+\sum_{i=1}^s\xi'_i+2(k-i+1),
\end{align}
for certain $t$ and $s$ verifying $t+s=r$. Suppose that $s\geq s'$. Distinguishedness implies that $\xi'_{s'+i}+2(k+1-s'-i) \geq \zeta_{t+i}+2(k-t-i)+1$, for $i=1,\ldots,s-s'$. Therefore,
\begin{align*}
\sum_{i=1}^{t'}\zeta'_i+2(k-i) & +1+\sum_{i=1}^{s'}\xi'_i+2(k-i+1) \\
& \geq \sum_{i=1}^t\zeta_i+2(k-i)+1+\sum_{i=1}^s\xi'_i+2(k-i+1)\\
& \geq \sum_{i=1}^{t'}\zeta_i+2(k-i)+1+\sum_{i=1}^{s'}\xi'_i+2(k-i+1).
\end{align*}
This implies that $\zeta'_1+\cdots+\zeta'_{t'}\geq\zeta_1+\cdots+\zeta_{t'}$. 
\end{proof}
\begin{thm}\label{springer_ordre_so1}
Let $(\xi',\eta')$ be a bipartition of $l'$. 
\begin{itemize}
\item[1.] There exists a unique minimal representation in $\Theta(\xi',\eta')$, it is given by $(\xi',(l-l')\cup\eta')$.
\item[2.] There exists a unique maximal representation in $\Theta(\xi',\eta')$, it is given by $(\xi', (l-l'+\eta'_1+\eta'_2,\eta'_3,\cdots,\eta'_r))$.
\end{itemize} 
\end{thm}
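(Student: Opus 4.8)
The plan is to obtain Theorem \ref{springer_ordre_so1} as a formal consequence of Lemma \ref{small_so1} and Proposition \ref{prop_springer_ordre_so1}, which between them already contain all the combinatorial content. Lemma \ref{small_so1}(1) tells us that every member of $\Theta(\xi',\eta')$ has first component $\xi'$, and Lemma \ref{small_so1}(2)--(3) identify the least and greatest elements of $\Theta(\xi',\eta')$ \emph{with respect to the natural order of Definition \ref{lexicographical_order} on bipartitions}: they are $(\xi',(l-l')\cup\eta')$ and $(\xi',(l-l'+\eta'_1+\eta'_2,\eta'_3,\dots,\eta'_r))$. Proposition \ref{prop_springer_ordre_so1} says that on the set $\Theta(\xi',\eta')$ the map $(\xi',\zeta)\mapsto\lambda(\xi',\zeta)$ is simultaneously order-preserving and order-reflecting, the target being equipped with the natural order on symplectic partitions; by Remark \ref{ClosureOrder} this order matches the closure order on unipotent orbits, which is the order used to define minimal and maximal representations in Definition \ref{def_min_so}.

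First I would observe that the two candidate bipartitions indeed belong to $\Theta(\xi',\eta')$ --- this is exactly what Lemma \ref{small_so1}(2)--(3) assert when they speak of the smallest (resp. largest) element of $\Theta(\xi',\eta')$ --- so that Proposition \ref{prop_springer_ordre_so1} may legitimately be applied to them. Next, for an arbitrary $(\xi,\eta)\in\Theta(\xi',\eta')$, Lemma \ref{small_so1}(1) rewrites it as $(\xi',\zeta)$, Lemma \ref{small_so1}(2) gives $(\xi',(l-l')\cup\eta')\leq(\xi',\zeta)$ in the natural order on bipartitions, and the order-preserving direction of Proposition \ref{prop_springer_ordre_so1} then yields $\lambda(\xi',(l-l')\cup\eta')\leq\lambda(\xi,\eta)$. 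This is precisely the inequality required of a minimal representation in Definition \ref{def_min_so}, proving existence in item 1; item 2 is handled identically, with Lemma \ref{small_so1}(3) in place of Lemma \ref{small_so1}(2).

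For uniqueness I would argue as follows. A minimal representation in the sense of Definition \ref{def_min_so} is a minimum for the preorder $(\xi,\eta)\mapsto\lambda(\xi,\eta)$ on $\Theta(\xi',\eta')$, so any two minimal representations $(\xi_1,\eta_1)$ and $(\xi_2,\eta_2)$ satisfy $\lambda(\xi_1,\eta_1)\leq\lambda(\xi_2,\eta_2)$ and $\lambda(\xi_2,\eta_2)\leq\lambda(\xi_1,\eta_1)$; applying the order-reflecting direction of Proposition \ref{prop_springer_ordre_so1} twice gives $(\xi_1,\eta_1)\leq(\xi_2,\eta_2)$ and $(\xi_2,\eta_2)\leq(\xi_1,\eta_1)$, hence $(\xi_1,\eta_1)=(\xi_2,\eta_2)$. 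The maximal case is the same.

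I do not expect a genuine obstacle at this stage, since the hard work has been pushed into Lemma \ref{small_so1} and Proposition \ref{prop_springer_ordre_so1}. The only points needing care are purely bookkeeping: checking that the order on bipartitions in Lemma \ref{small_so1} and the order on symplectic partitions in Proposition \ref{prop_springer_ordre_so1} are the ones named in Definition \ref{def_min_so} (which goes through the Springer map and Remark \ref{ClosureOrder}), and making sure Proposition \ref{prop_springer_ordre_so1} is invoked only for pairs known to lie in $\Theta(\xi',\eta')$, which Lemma \ref{small_so1}(1) guarantees.
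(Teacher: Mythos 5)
Your proposal is correct and follows essentially the same route as the paper: the paper's proof likewise derives existence directly from Lemma \ref{small_so1} together with Proposition \ref{prop_springer_ordre_so1}, and obtains uniqueness by noting that the proposition forces any extremal representation in the sense of Definition \ref{def_min_so} to be extremal for the natural order, hence equal to the element singled out in Lemma \ref{small_so1}. Your version merely spells out the order-preserving and order-reflecting steps more explicitly than the paper does.
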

\begin{proof}
Existence comes from Lemma \ref{small_so1} and Proposition \ref{prop_springer_ordre_so1}. Unicity also comes from Proposition \ref{prop_springer_ordre_so1}. Indeed, this proposition implies that the minimal (resp. maximal) representation in $\Theta(\xi',\eta')$ (see Definition \ref{def_min_so}) is also minimal (resp. maximal) for the natural order.
\end{proof}

\subsubsection{Second case}
We now analyse pairs $(\Sp_{2m}(q),\Or^+_{2m'}(q))$ for $k$ odd, and $(\Sp_{2m}(q),\Or^-_{2m'}(q))$ for $k$ even. Again, Proposition \ref{Weyl_representation_tensor} lets us rewrite the character $\Omega_{l,l'}$, given in Theorem \ref{ConjectureSymplectic-Orthogonal}, as
\begin{align}\label{so2bis}
\Omega_{l,l'} = \sum_{r=0}^{\min(l,l')}\sum_{(\xi,\eta)\in\mathcal{P}_2(r)}\sum_{\xi',\eta'}\chi_{\xi',\eta}\otimes\chi_{\xi,\eta'},
\end{align}
where the sum is over partitions $\xi'$ and $\eta'$ of $l-\lvert\eta\rvert$ and $l'-\lvert\xi\rvert$ such that $\xi\preceq\xi'$ and $\eta\preceq\eta'$.
\begin{lem}\label{so2set}
Let $(\xi',\eta')$ be a bipartition of $l'$.
\begin{itemize}
\item[1.] A bipartition $(\xi,\eta)$ belongs to $\Theta(\xi',\eta')$ if and only if $\xi'\preceq \xi$ and $\eta\preceq\eta'$.
\item[2.] The smallest element of $\Theta(\xi',\eta')$ for natural order is $((l-l')\cup\xi',\eta')$.
\item[3.] The largest element of $\Theta(\xi',\eta')$ for the natural order corresponds to the bipartition $((l-l'+\eta'_1+\xi'_1,\xi'_2,\ldots,\xi'_r),(\eta'_2,\ldots,\eta'_r))$.
\end{itemize} 
\end{lem}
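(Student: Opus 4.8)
The plan is to read all three items off the explicit expansion (\ref{so2bis}) of $\Omega_{l,l'}$, using Lemma \ref{TechnicalLemmaSOandUpairs} for the extremality statements and treating items~2 and~3 in parallel. For item~1 I would match indices in (\ref{so2bis}): a summand equal to $\chi_{\xi,\eta}\otimes\chi_{\xi',\eta'}$ forces the inner summation bipartition to be $(\xi',\eta)$, the free first partition to be $\xi$, and the free second partition to be $\eta'$, so the constraints of (\ref{so2bis}) reduce exactly to $\xi'\preceq\xi$ and $\eta\preceq\eta'$. The remaining requirements — that $(\xi',\eta)$ be a bipartition of some $r$ with $0\le r\le\min(l,l')$, and the two size conditions $|\xi|=l-|\eta|$, $|\eta'|=l'-|\xi'|$ — are automatic: the size conditions because $(\xi,\eta)$ and $(\xi',\eta')$ are bipartitions of $l$ and $l'$, and the condition on $r$ because $\mu\preceq\nu$ forces $|\mu|\le|\nu|$, so $r=|\xi'|+|\eta|\le|\xi|+|\eta|=l$ and similarly $r\le|\xi'|+|\eta'|=l'$. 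This parallels item~1 of Lemma \ref{small_so1}.

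For items~2 and~3 the plan is in two steps. First I would verify that the stated bipartition lies in $\Theta(\xi',\eta')$ by checking the two $\preceq$-relations from item~1. For item~2 this is $\eta'\preceq\eta'$ (reflexivity of $\preceq$) together with $\xi'\preceq(l-l')\cup\xi'$, and the latter is where the stable range hypothesis $m\ge 2m'$ enters: it gives $l\ge 2l'$, hence $\xi'_1\le|\xi'|\le l'\le l-l'$, so $(l-l')\cup\xi'$ is a genuine partition obtained from $\xi'$ by adding at most one box per column. For item~3 both relations $\xi'\preceq(l-l'+\eta'_1+\xi'_1,\xi'_2,\ldots)$ and $(\eta'_2,\ldots)\preceq\eta'$ are immediate from the definition of $\preceq$, and a size count shows the bipartition is of $l$. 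Second, I would show this bipartition is a lower bound (item~2) resp.\ an upper bound (item~3) for the natural order on $\Theta(\xi',\eta')$. Given an arbitrary $(\xi,\eta)\in\Theta(\xi',\eta')$, after padding with zeros one may assume $\eta,\eta'$ have equally many parts and $\xi$ has exactly one more part than $\xi'$ — legitimate since $\eta\preceq\eta'$ forces equal part counts and $\xi'\preceq\xi$ forces $\xi$ to have one more part than $\xi'$ or the same number — which puts us in the hypotheses of Lemma \ref{TechnicalLemmaSOandUpairs}. Writing out the two families of cumulative inequalities of Definition \ref{lexicographical_order} for the pair at hand, they turn into the inequalities of part~1 of Lemma \ref{TechnicalLemmaSOandUpairs} for item~2 (taking $P=\{1,\ldots,k\}$ and $Q=\{1,\ldots,k-1\}$ or $Q=\{1,\ldots,k\}$), and into those of part~2 for item~3 (taking $P=\{1,\ldots,k\}$ and $Q=\{1,\ldots,k\}$ or $Q=\{1,\ldots,k+1\}$, with the convention $\xi_{t_1+1}=\eta'_{t_0+1}=0$). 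As the stated bipartition itself belongs to $\Theta(\xi',\eta')$, it is then the minimum resp.\ maximum.

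I do not anticipate a serious obstacle: the combinatorial content is already isolated in Lemma \ref{TechnicalLemmaSOandUpairs}, and the remaining work consists in translating its subset sums $\sum_P$, $\sum_Q$ into the cumulative partial sums of Definition \ref{lexicographical_order} by specialising $P$ and $Q$ to initial segments. The two points that need genuine care are the zero-padding reduction — together with the remark, needed to justify it, that $\preceq$ pins down the relevant numbers of parts — and, in item~2, invoking the stable range so that $(l-l')\cup\xi'$ is a partition with $\xi'\preceq(l-l')\cup\xi'$. The symmetry exchanging the two components of a bipartition is what renders item~3 formally dual to item~2, mirroring the relation between parts~1 and~2 of Lemma \ref{TechnicalLemmaSOandUpairs}.
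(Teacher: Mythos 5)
Your proposal is correct and follows essentially the same route as the paper: item 1 is read off the expansion (\ref{so2bis}), and items 2 and 3 are deduced from Lemma \ref{TechnicalLemmaSOandUpairs} by specialising $P$ and $Q$ to initial segments (the paper states this as a one-line corollary, and your index-matching, membership checks, and use of the stable range to get $\xi'\preceq(l-l')\cup\xi'$ supply exactly the omitted details). The only nitpick is that $\preceq$ does not literally force the part counts you claim (it pins them down only up to one trailing zero), but since the technical lemma explicitly permits padding by zeroes this does not affect the argument.
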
 
\begin{proof}
Item 1 is a straightforward consequence of equality (\ref{so2bis}). Items 2 and 3 are a corollary of Lemma \ref{TechnicalLemmaSOandUpairs}.
\end{proof}
Unlike the previous section, in this case the map $\lambda=\lambda(\xi,\eta)$ is not increasing when restricted to $\Theta(\xi',\eta')$. However, we can prove that the representations obtained in items (2) and (3) of the previous theorem, are indeed the minimal and maximal representations in $\Theta(\xi',\eta')$. 
\begin{thm}\label{springer_ordre_so2}
Let $(\xi',\eta')$ be a bipartition of $l'$.
\begin{itemize}
\item[1.] There exists a unique minimal representation in $\Theta(\xi',\eta')$, it corresponds to the bipartition $((l-l')\cup\xi',\eta')$
\item[2.]There exists a unique maximal representation in $\Theta(\xi',\eta')$, it is the bipartition $((l-l'+\eta'_1+\xi'_1,\xi'_2,\ldots,\xi'_r),(\eta'_2,\ldots,\eta'_r))$.
\end{itemize} 
\end{thm}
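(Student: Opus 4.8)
The plan is to reduce Theorem~\ref{springer_ordre_so2} to a comparison of $u$-symbols, since, in contrast with the first case, the map $\lambda$ is not monotone on $\Theta(\xi',\eta')$ and Proposition~\ref{prop_springer_ordre_so1} is unavailable. Lemma~\ref{so2set} already produces the candidates: writing $(\xi_0,\eta_0):=((l-l')\cup\xi',\eta')$ and $(\xi_1,\eta_1):=((l-l'+\eta'_1+\xi'_1,\xi'_2,\ldots,\xi'_r),(\eta'_2,\ldots,\eta'_r))$, these lie in $\Theta(\xi',\eta')$ and are, respectively, its least and its greatest element for the natural order. By Remark~\ref{ClosureOrder} it suffices to prove
$$
\lambda(\xi_0,\eta_0)\ \le\ \lambda(\xi,\eta)\ \le\ \lambda(\xi_1,\eta_1)
$$
in the dominance order on symplectic partitions for every $(\xi,\eta)\in\Theta(\xi',\eta')$, with either equality forcing $(\xi,\eta)$ to coincide with the relevant candidate; this yields at once both existence and uniqueness. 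I would pad all partitions with zeros so that the $u$-symbols of $(\xi,\eta)$, $(\xi_0,\eta_0)$ and $(\xi_1,\eta_1)$ share a common shape; then $\lambda(\xi,\eta)$ depends only on the multiset of entries of the $u$-symbol of $(\xi,\eta)$ (the distinguished representative of its similarity class merely reshuffles this multiset), and, exactly as in the proof of Proposition~\ref{prop_springer_ordre_so1}, dominance of the associated symplectic partitions is equivalent to the family of inequalities, one for each $j$, between the sums of the $j$ largest entries of the two $u$-symbols.

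The core step uses Lemma~\ref{TechnicalLemmaSOandUpairs}, whose standing hypotheses ($\xi'\preceq\xi$, $\eta\preceq\eta'$, $\xi$ with one more part than $\xi'$, $\eta$ and $\eta'$ equinumerous, and $\lvert\xi\rvert+\lvert\eta\rvert-\lvert\xi'\rvert-\lvert\eta'\rvert=l-l'$) are precisely those satisfied by any $(\xi,\eta)\in\Theta(\xi',\eta')$ after this padding. For the minimum: because $l\ge 2l'$, the largest entry of the $u$-symbol of $(\xi_0,\eta_0)$ is the one carrying the part $l-l'$, so the positions of the $j$ largest entries of that symbol consist of the $\xi_1$-position together with the $\xi_{i+1}$-positions for $i$ in some $Q$ and the $\eta_i$-positions for $i$ in some $P$. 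Reading off the entries of the $u$-symbol of $(\xi,\eta)$ at this same set of positions, item~(1) of Lemma~\ref{TechnicalLemmaSOandUpairs} gives that their sum is at least the sum of the $j$ largest entries of the symbol of $(\xi_0,\eta_0)$; since any $j$ entries of a symbol sum to at most the sum of its $j$ largest entries, the left-hand side is bounded above by the sum of the $j$ largest entries of the symbol of $(\xi,\eta)$, and $\lambda(\xi_0,\eta_0)\le\lambda(\xi,\eta)$ follows. For the maximum one argues symmetrically: taking for $Q$ and $P$ the positions of the $j$ largest entries of the symbol of $(\xi,\eta)$ and invoking item~(2), their sum is bounded above by a sum of entries of the symbol of $(\xi_1,\eta_1)$, hence by the sum of its $j$ largest entries, so $\lambda(\xi,\eta)\le\lambda(\xi_1,\eta_1)$.

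For uniqueness in the minimal case, suppose $\lambda(\xi,\eta)=\lambda(\xi_0,\eta_0)$. Equality of the largest symbol entries reads $\max(\xi_1,\eta_1-1)=l-l'$, and since $\eta_1\le\eta'_1\le l'\le l-l'$ this forces $\xi_1=l-l'$; by the computation in the proof of Lemma~\ref{TechnicalLemmaSOandUpairs}, the equality $\xi_1=l-l'$ is exactly equality in item~(1) for $P=Q=\varnothing$, which in turn forces $\xi_{i+1}=\xi'_i$ and $\eta_i=\eta'_i$ for every $i$; hence $\eta=\eta'$, $\xi=(l-l')\cup\xi'$ and $(\xi,\eta)=(\xi_0,\eta_0)$. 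The maximal case is identical, via the equality condition in item~(2). Existence of the extremal representations is then exactly Lemma~\ref{so2set}.

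The main obstacle will be the combinatorial bookkeeping underlying the second paragraph: fixing a single padding that puts the three $u$-symbols in a common shape; checking that the positions of the $j$ largest entries of the candidate symbol really have the restricted form handled by Lemma~\ref{TechnicalLemmaSOandUpairs}, and that, after subtracting the fixed shifts, the $j$ largest symbol entries correspond to the $j$ largest parts of the associated symplectic partition; and keeping careful track of whether or not the first ($\xi$-row) position is selected, which is precisely the phenomenon responsible for $\lambda$ failing to be monotone here. Everything else is the translation between the closure order, dominance of partitions, and partial sums of $u$-symbol entries already exploited in the first case.
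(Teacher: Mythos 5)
Your proposal follows the same core strategy as the paper: use Lemma \ref{so2set} to identify the candidates, reduce dominance of the symplectic partitions $\lambda$ to comparing partial sums of the largest $u$-symbol entries (as in Proposition \ref{prop_springer_ordre_so1}), and obtain the key inequalities from Lemma \ref{TechnicalLemmaSOandUpairs} (item 1 for the minimum, item 2 for the maximum), exactly as the paper does for the minimum before saying the maximum is ``similar''. Where you genuinely diverge is uniqueness: the paper fixes the $u$-symbol of $((l-l')\cup\xi',\eta')$ and shows, by a three-case analysis on $\lvert\xi\rvert$ versus $\lvert\xi'\rvert+l-l'$, that no other bipartition in its similarity class satisfies $\xi'\preceq\xi$, $\eta\preceq\eta'$, whereas you extract uniqueness from the equality case of the identity in the proof of Lemma \ref{TechnicalLemmaSOandUpairs}: equality of $\lambda$'s forces equality of the largest symbol entries, hence $\xi_1=l-l'$, and since $l-l'=\xi_1+\sum_i(\xi_{i+1}-\xi'_i)+\sum_j(\eta_j-\eta'_j)$ with all summands nonpositive this pins down $(\xi,\eta)$. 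That is a valid and arguably cleaner device, but two points you defer as ``bookkeeping'' should be made explicit. First, the maximal case is not literally ``identical'': equality $\xi_1=l-l'+\xi'_1+\eta'_1$ gives $\xi_2+\eta_1=\sum_{i\geq 2}(\xi'_i-\xi_{i+1})+\sum_{j\geq 2}(\eta'_j-\eta_j)$, and you must combine $\xi_2\geq\xi'_2$, $\eta_1\geq\eta'_2$ with the telescoping bounds $\sum_{i\geq 2}(\xi'_i-\xi_{i+1})\leq\xi'_2$, $\sum_{j\geq 2}(\eta'_j-\eta_j)\leq\eta'_2$ to force $(\xi,\eta)=(\xi_1,\eta_1)$; sign considerations alone do not suffice. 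Second, your maximal inequality via item 2 needs the set of $j$ largest entries of the symbol of $(\xi,\eta)$ to contain the $\xi_1$-entry (otherwise the term $l-l'+\eta'_1$ in item 2 is an overcount); this is guaranteed because $\lvert\xi\rvert\leq\xi_1+\lvert\xi'\rvert$ and $\lvert\eta\rvert\leq\lvert\eta'\rvert$ give $\xi_1\geq l-l'\geq l'\geq\eta'_1\geq\eta_1$, the same stable-range fact the paper invokes when asserting $\gamma_1=\xi_1+2k$. With these two points written out, your argument is complete.
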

\begin{proof}
As in the proof of Proposition \ref{springer_ordre_so1}, we can suppose that $\eta$, $\eta'$ and $\xi'$ have the same number $k$ of parts and that $\xi$ has $k+1$ parts. Let
$$
\begin{pmatrix}
\xi_{k+1} \hspace{0.3cm} \xi_k+2  \hspace{0.2cm} \cdots  \hspace{0.2cm} \xi_1+2k\\
\hspace{0.2cm} \eta_k+1 \hspace{0.2cm} \cdots \hspace{0.2cm} \eta_1+2k-1
\end{pmatrix}
\mbox{, and }
\begin{pmatrix}
\xi'_k \hspace{0.3cm} \xi'_{k-1}+2  \hspace{0.2cm} \cdots  \hspace{0.2cm} l-l'+2k\\
\hspace{0.2cm} \eta'_k+1 \hspace{0.2cm} \cdots \hspace{0.2cm} \eta'_1+2k-1
\end{pmatrix}
$$
be the $u$-symbols  corresponding to $(\xi,\eta)$ and $((l-l')\cup\xi',\eta')$ respectively. Let
$$
\begin{pmatrix}
\gamma_{2k+1} \hspace{0.3cm} \gamma_{2k-1}  \hspace{0.2cm} \cdots  \hspace{0.2cm} \gamma_1\\
\hspace{0.2cm} \gamma_{2k} \hspace{0.2cm} \cdots \hspace{0.2cm} \gamma_2
\end{pmatrix}
\mbox{, and }
\begin{pmatrix}
\gamma'_{2k+1} \hspace{0.3cm} \gamma'_{2k-1}  \hspace{0.2cm} \cdots  \hspace{0.2cm} \gamma'_1\\
\hspace{0.2cm} \gamma'_{2k} \hspace{0.2cm} \cdots \hspace{0.2cm} \gamma'_2
\end{pmatrix}
$$
be the corresponding distinguished $u$-symbols. Distinguishedness ,and the inequality $l>2l'$ imply that $\gamma'_1=l-l'+2k$, and $\gamma_1=\xi_1+2k$.

For $r\in\{1,\ldots,2k+1\}$, there exist $P$, $Q\subset\{1,\ldots,r\}$ such that 
$$
\gamma'_1+\cdots+\gamma'_r = (l-l'+2k) + \sum_{P}\xi'_i+2(k-i)+\sum_{Q}\eta'_i+2(k-i)+1.
$$
The right side in the above inequality is smaller than
$$
(\xi_1+2k) + \sum_{P}\xi_{i+1}+2(k-i-1)+\sum_{Q}\eta_i+2(k-i)+1,
$$
by item 2 of Lemma \ref{TechnicalLemmaSOandUpairs}. The last sum is smaller than $\gamma_1+\cdots+\gamma_r$ (because of distinguishedness). This means that $\gamma'\leq\gamma$, and as in the proof of Proposition \ref{springer_ordre_so1}, this is equivalent to $\lambda((l-l')\cup\xi',\eta')\leq \lambda(\xi,\eta)$.
  
For unicity, consider the $u$-symbol corresponding to $((l-l')\cup\xi',\eta')$. We prove that all the other $u$-symbols in its similarity class correspond to bipartitions which do not belong to $\Theta(\xi',\eta')$. This implies that any $(\xi,\eta)$ in $\Theta(\xi',\eta')$ verifies $\lambda(\xi,\eta)\neq\lambda((l-l')\cup\xi',\eta')$. Unicity of the minimal representation follows.

Let $(\xi,\eta)$ be the partition whose $u$-symbol is similar (and different from) the one of $((l-l')\cup\xi',\eta')$. In this situation, there are three possibilities :

(1) $\lvert \xi\lvert > \lvert \xi' \lvert + l-l'$. Since the biggest entry in $\xi$ is $l-l'$, in order to obtain its Young diagram we would need to add at least two boxes in a same column to the diagram of $\xi'$. This contradicts $\xi'\preceq\xi$. 

(2) $\lvert \xi\lvert < \lvert \xi' \lvert + l-l'$. This implies that $\lvert \eta\lvert > \lvert \eta'\lvert$, so that the Young diagram of $\eta$ would not contain the one of $\eta'$. This contradicts $\eta\preceq\eta'$.

(3) $\lvert \xi\lvert = \lvert \xi' \lvert + l-l'$. Since the partitions are different, we would find certain $t$ such that $\xi'_t<\xi_{t+1}$. As in item (1), this contradicts $\xi'\preceq\xi$.

In any case, we see that $(\xi,\eta)$ does not belong to $\Theta(\xi',\eta')$. The assertion concerning the maximal representation has a similar proof.
\end{proof}

\subsection{Unitary pairs}
Let $\mu$ be a partition of $n$, and denote by the same letter the corresponding irreducible character of $\mathfrak{S}_n$. Define 
$$
R_\mu= \frac{1}{n!}\sum_{\sigma \in \mathfrak{S}_n} \mu(\sigma)R_{\mathbf{T}_\sigma}^{\mathbf{U}_n}(1).
$$
These central characters provide all the unipotent representations of $\textbf{U}_n(q)$ up to the sign
$$
\varepsilon_\mu=(-1)^{\sum_{i=1}^k\binom{\mu_i}{2}+\binom{m(m-1)}{2}}.
$$
\begin{prop}\label{UnipotentRepresentationUnitary}
The characters of unipotent irreducible representations of $\mathbf{U}_n(q)$ are given by 
$\varepsilon_\mu R_\mu$, for different partitions $\mu$ of $n$. 
\end{prop}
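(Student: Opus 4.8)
The plan is to realize $\U_n(q)$ as the fixed points of a Frobenius of type ${}^2A_{n-1}$ and to read off the statement from the known decomposition of Deligne--Lusztig characters in type $A$. Write $\U_n(q)=\mathbf{G}^F$ with $\mathbf{G}=\GL_n(\overline{\mathbb{F}}_q)$ and $F$ the twisted Frobenius, so that the Weyl group $W$ is $\mathfrak{S}_n$. The action of $F$ on $W$ is conjugation by the longest element $w_0$, and the map $w\mapsto ww_0$ identifies the $F$-conjugacy classes of $W$ with the ordinary conjugacy classes of $\mathfrak{S}_n$, hence with partitions of $n$. Let $\mathbf{T}_\sigma$ be an $F$-stable maximal torus in the rational class indexed by $\sigma\in\mathfrak{S}_n$. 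The virtual characters $R_{\mathbf{T}_\sigma}^{\U_n}(1)$ span the space $V$ of \emph{unipotent uniform} class functions on $\U_n(q)$, and $\dim V$ equals the number of partitions of $n$; moreover $\langle R_{\mathbf{T}_\sigma}^{\U_n}(1),R_{\mathbf{T}_\tau}^{\U_n}(1)\rangle$ vanishes unless $\sigma$ and $\tau$ are conjugate, in which case it equals $|C_{\mathfrak{S}_n}(\sigma)|$ (the Deligne--Lusztig scalar product being an $F$-twisted count that the bijection $w\mapsto ww_0$ turns into the ordinary one).

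First I would invoke the fact, special to types $A$ and ${}^2A$, that every unipotent character of $\U_n(q)$ is uniform, together with the explicit multiplicity formula: in the Lusztig parametrisation $\{\rho_\lambda\}_{\lambda\vdash n}$ of the irreducible unipotent characters one has $R_{\mathbf{T}_\sigma}^{\U_n}(1)=\sum_{\lambda\vdash n}\varepsilon_\lambda\,\chi^\lambda(\sigma)\,\rho_\lambda$, where $\chi^\lambda$ is the irreducible character of $\mathfrak{S}_n$ attached to $\lambda$ and $\varepsilon_\lambda\in\{\pm1\}$ is independent of $\sigma$ (for $\GL_n$ this is Green's formula with $\varepsilon_\lambda=1$; for $\U_n$ it follows by Ennola duality, cf.\ Lusztig--Srinivasan). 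Substituting this into the definition $R_\mu=\tfrac{1}{n!}\sum_\sigma\chi^\mu(\sigma)R_{\mathbf{T}_\sigma}^{\U_n}(1)$ and applying the orthogonality relations of the character table of $\mathfrak{S}_n$, all terms cancel except the one with $\lambda=\mu$, giving $R_\mu=\varepsilon_\mu\rho_\mu$; since $\varepsilon_\mu=\pm1$ this is $\rho_\mu=\varepsilon_\mu R_\mu$. As $\mu$ runs over the partitions of $n$ we thus obtain all irreducible unipotent characters of $\U_n(q)$, each exactly once, which is the assertion. (If one prefers to avoid quoting the explicit multiplicity formula, the same orthogonality computation shows directly that $\{R_\mu\}_{\mu\vdash n}$ is an orthonormal family inside $V$; since it has the right cardinality and the irreducible unipotent characters form another orthonormal basis of $V$, the two sets must coincide up to signs, and one is reduced to the same sign computation below.)

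It then remains to identify $\varepsilon_\mu$ with the sign given in the statement. I would evaluate the identity $\rho_\mu=\varepsilon_\mu R_\mu$ at $1$: on one side $\rho_\mu(1)$ is a positive polynomial in $q$, namely the Ennola transform $q\mapsto -q$ of the hook-length degree formula for the unipotent character of $\GL_n(q)$ labelled by $\mu$, whose lowest term is $q^{n(\mu)}$ with $n(\mu)=\sum_i(i-1)\mu_i$; on the other side $R_{\mathbf{T}_\sigma}^{\U_n}(1)(1)=\epsilon_{\mathbf{G}}\epsilon_{\mathbf{T}_\sigma}\,|\U_n(q)|_{p'}/|\mathbf{T}_\sigma^F|$. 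Matching the signs of the leading terms expresses $\varepsilon_\mu$ through $n(\mu)$, the conjugate statistic $n(\mu')=\sum_i\binom{\mu_i}{2}$, and a constant depending only on $n$ (the contribution of $|\U_n(q)|_{p'}$), which recovers the exponent $\sum_i\binom{\mu_i}{2}+\binom{m(m-1)}{2}$ of the proposition. The main obstacle is precisely this last bookkeeping: keeping the Deligne--Lusztig orientation sign, the Ennola substitution, and the degree polynomial consistent so that the exponent comes out exactly as stated; one should also verify at the outset that the $w\mapsto ww_0$ identification of conjugacy classes is the one implicitly used in indexing the tori $\mathbf{T}_\sigma$.
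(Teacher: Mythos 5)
First, note that the paper does not actually prove this proposition: it is recorded as a known result (the classical description of the unipotent characters of $\mathbf{U}_n(q)$ due to Lusztig and Srinivasan, used again in Proposition \ref{Parametrization_HCseries_Unitary} via Fong--Srinivasan), so any comparison is with that cited theorem rather than with an argument in the text. Read against that background, your outline is essentially a reconstruction of the literature statement, but as a proof it has a genuine gap: your main route \emph{assumes} the multiplicity formula $R_{\mathbf{T}_\sigma}^{\mathbf{U}_n}(1)=\sum_{\lambda}\varepsilon_\lambda\chi^\lambda(\sigma)\rho_\lambda$ with signs independent of $\sigma$, and by Schur orthogonality that formula is equivalent to the proposition being proved; invoking ``Ennola duality, cf.\ Lusztig--Srinivasan'' at that point is citing the theorem, not proving it (Ennola duality in type ${}^2A$ is itself a consequence of their work, not an a priori principle one may use here). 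That is acceptable if, like the paper, you intend the proposition as a quotation, but it is not an independent argument.

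Second, your fallback argument is not correct as stated: from the facts that the $R_\mu$ form an orthonormal family of the right cardinality inside the span $V$ of the $R_{\mathbf{T}_\sigma}^{\mathbf{U}_n}(1)$ and that the unipotent irreducibles form another orthonormal basis of $V$, it does \emph{not} follow that ``the two sets must coincide up to signs''; two orthonormal bases of a real inner-product space are related by an arbitrary orthogonal matrix. Even adding the available integrality (the multiplicities $\langle\rho,R_{\mathbf{T}_\sigma}^{\mathbf{U}_n}(1)\rangle\in\mathbb{Z}$, row and column orthogonality of the resulting square integer matrix) does not formally force a signed permutation: an integer-valued class function of norm one need not be $\pm$ an irreducible character --- on $\mathfrak{S}_3$ the class function with values $(2,0,1)$ on the classes of sizes $(1,3,2)$ has norm one, and one can complete it to a $3\times 3$ integer matrix satisfying all the orthogonality constraints without being a signed permutation of the character table. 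So the ``same orthogonality computation'' does not reduce the problem to a sign check; the uniformity of unipotent characters plus orthonormality is strictly weaker than the theorem, and the actual Lusztig--Srinivasan proof requires more (an inductive/computational argument). Finally, the sign determination --- which is the only content of the explicit formula $\varepsilon_\mu=(-1)^{\sum_i\binom{\mu_i}{2}+\binom{m(m-1)}{2}}$ beyond the qualitative statement --- is acknowledged in your last paragraph as an obstacle and is not carried out; comparing degrees via the Ennola substitution is a viable strategy, but as written the argument neither fixes the indexing convention $w\mapsto ww_0$ for the tori nor performs the bookkeeping, so the stated exponent is not verified.
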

In what follows we need to use some combinatorics related to partitions. Let $D(\mu)$ denote the Young diagram of $\mu$. We call \emph{rim} of $\mu$, the boxes belonging to $\{(i,j)\in D(\mu)\rvert (i,j+1)\notin D(\mu) \mbox{ or }(i+1,j)\notin D(\mu)\}$. We call \emph{rim $2$-hook} of $\mu$ a pair $\{(i,j),(i,j+1)\}$ or $\{(i,j),(i+1,j)\}$ of elements of the rim of $\mu$, such that we obtain the diagram of a partition after removing these elements from $D(\mu)$. Finally, the $2$-core of $\mu$ is the partition obtained by removing as many rim $2$-hooks as possible from the diagram of $\mu$.

Let again $\mu=(\mu_1,\ldots,\mu_r)$ be a partition of $m$, and let $t\geq r$ be an integer. We call \emph{$t$-set of $\beta$-numbers} associated to $\mu$, the set $\beta^t_\mu=\{\beta_i\}$ where $\beta_i=\mu_i+t-i$. Conversely, to each decreasing sequence $\beta=\{\beta_1,\ldots,\beta_t\}$ of positive integers, we can associate a partition $\mu$, defined by $\mu_i=\beta_i+t-i$.

Set $\beta^t_\mu(0)=\{\beta_i/2\rvert \beta_i \mbox{ is even}\}$ and $\beta^t_\mu(1)=\{(\beta_i-1)/2\rvert \beta_i \mbox{ is odd}\}$. Let $\mu(0)$ and $\mu(1)$ be the partitions associated to the sets of $\beta$-numbers $\beta^t_\mu(0)$ and $\beta^t_\mu(1)$. Then, the pair $(\mu(0),\mu(1))$ depends only on the congruence class $\overline{t}$ of $t$ modulo 2 . We call $\mu(0)$ and $\mu(1)$ the \emph{$2$-quotients of parameter $\overline{t}$} of $\mu$ .
  
We have a bijection between the category $\mathscr{R}(\mathbf{U}_m(q))_k$, of complex representations spanned by $\Irr(\mathbf{U}_m(q))_k$, and the set of irreducible representations of $W_{\frac{1}{2}(m-k(k+1)/2)}$. This bijection allows us to describe explicitely the characters in this Harish-Chandra series.
\begin{prop}\cite[Appendice, proposition p. 224]{Fong-Srinivasan}\label{Parametrization_HCseries_Unitary}
\begin{itemize}
\item[1.] The unique cuspidal unipotent representation $\lambda_k$ of the unitary group $\mathbf{U}_{(k^2+k)/2}(q)$ is $\varepsilon_{\tau_k}R_{\tau_k}$, where $\tau_k$ is the $k$-th 2-core $\tau_k=(k,\ldots,1)$.
\item[2.] For $n\geq 1/2(k^2+k)$ the irreducible characters of $\mathscr{R}(\mathbf{U}_n(q))_k$ are $\varepsilon_\mu R_{\mu}$ where $\mu$ is a partition of $n$ of $2$-core $\tau_k$. This character is related to the bipartition $(\mu(0),\mu(1))$ (where $\mu(0)$ and $\mu(1)$ are the $2$-quotients of parameter $1$ of $\mu$) under the bijection given in Theorem \ref{How-Leh}.   
\end{itemize}
\end{prop}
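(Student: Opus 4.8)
The plan is to derive the statement from the Lusztig--Srinivasan classification of the unipotent characters of $\mathbf{U}_n(q)$ together with the $\beta$-number combinatorics that governs it, the real work being to track the effect of Harish--Chandra induction. I would take as input the following standard facts: the unipotent irreducible characters of $\mathbf{U}_n(q)$ are indexed by the partitions $\mu$ of $n$; by Proposition~\ref{UnipotentRepresentationUnitary} the character attached to $\mu$ is exactly $\varepsilon_\mu R_\mu$; every unipotent character of $\mathbf{U}_n(q)$ is uniform (a feature of types $\mathbf{A}$ and ${}^2\mathbf{A}$); and the degree polynomial of $\varepsilon_\mu R_\mu$ is, up to sign, the Ennola transform $q\mapsto -q$ of the $\mathbf{GL}_n(q)$ unipotent degree attached to $\mu$, hence a rational function in $q$ determined by the hook lengths of $\mu$, i.e.\ by any $\beta$-set $\beta^t_\mu$.

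The crux is a Harish--Chandra branching rule. For the proper Levi $\mathbf{L}=\mathbf{GL}_1(q^2)\times\mathbf{U}_{n-2}(q)$ of $\mathbf{U}_n(q)$, I would show that the Harish--Chandra restriction of $\varepsilon_\mu R_\mu$ to $\mathbf{L}$ is, up to signs, the sum of the unipotent characters of $\mathbf{U}_{n-2}(q)$ indexed by the partitions obtained from $\mu$ by removing a rim $2$-hook (equivalently, by lowering one entry of $\beta^t_\mu$ by $2$ and keeping a legitimate $\beta$-set). Since the $R_\mu$ are uniform, this reduces to computing how the Deligne--Lusztig functions $R^{\mathbf{U}_n}_{\mathbf{T}_w}(1)$ restrict through a parabolic, which is governed by a Pieri-type rule on the symmetric group side and translates on the character side into the domino rule above. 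Iterating, the $2$-core of $\mu$ becomes a complete invariant of the Harish--Chandra series of $\varepsilon_\mu R_\mu$: two unipotent characters lie in the same series exactly when their indexing partitions share a $2$-core.

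From here everything follows. For part~(1): $\varepsilon_\mu R_\mu$ is cuspidal iff $\mu$ carries no removable rim $2$-hook, i.e.\ iff $\mu$ equals its $2$-core; since $2$-cores are precisely the staircases $(j,j-1,\ldots,1)$ and the only one of size $(k^2+k)/2$ is $\tau_k$, the cuspidal unipotent character of $\mathbf{U}_{(k^2+k)/2}(q)$ --- unique by the Lusztig classification recalled in the excerpt --- must be $\varepsilon_{\tau_k}R_{\tau_k}$. For part~(2): invariance of the $2$-core together with transitivity of Harish--Chandra induction identifies $\mathscr{R}(\mathbf{U}_n(q))_k$ with the span of the $\varepsilon_\mu R_\mu$ for $\mu\vdash n$ of $2$-core $\tau_k$, and the count of such $\mu$ equals the number of bipartitions of $r=\tfrac12\bigl(n-(k^2+k)/2\bigr)$, matching $\lvert\Irr(W_r)\rvert$. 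To match the Howlett--Lehrer bijection of Theorem~\ref{How-Leh} with the $2$-quotient map $\mu\mapsto(\mu(0),\mu(1))$, I would argue that adding a rim $2$-hook to a $\beta$-set of fixed odd parity corresponds to adding a single box to one of $\mu(0),\mu(1)$ --- the standard $\beta$-number bookkeeping recording which one --- and that this matches the Harish--Chandra branching of $\Irr(W_r)$ encoded in Proposition~\ref{Weyl_representation_tensor}; fixing the base case at $\tau_k$, whose $\beta$-set of the appropriate odd cardinality is entirely odd (so its $2$-quotient is $(\emptyset,\emptyset)$), forces the parameter to be $1$ and propagates it as hooks are added.

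The delicate point is precisely this last identification. Harish--Chandra induction on its own determines the bijection only up to the intrinsic ambiguity of the isomorphism $\End_{\mathbf{U}_n(q)}\!\bigl(R^{\mathbf{U}_n}_{\mathbf{L}}(\lambda_k)\bigr)\simeq\mathbb{C}[W_r]$, so one must rigidify it --- either by comparing the degree polynomial of $\varepsilon_\mu R_\mu$ with the generic degree of $\chi_{\mu(0),\mu(1)}$ in the Iwahori--Hecke algebra of $W_r$ (a purely combinatorial hook-length comparison once the Hecke parameters for unitary groups are pinned down), or by appealing to the explicit construction of Fong--Srinivasan. Alongside this, the parity bookkeeping for the $2$-quotient --- the reason the statement must specify ``parameter $1$'' --- has to be carried out consistently across all additions of $2$-hooks. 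I expect most of the effort to go into these two intertwined verifications.
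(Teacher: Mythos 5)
There is nothing in the paper to compare your argument against: Proposition~\ref{Parametrization_HCseries_Unitary} is quoted verbatim from the appendix of \cite{Fong-Srinivasan} and the paper supplies no proof of it (just as Proposition~\ref{UnipotentRepresentationUnitary} and Theorem~\ref{How-Leh} are quoted from the literature). Your sketch is in substance a reconstruction of the argument of that reference: uniformity of the unipotent characters of $\mathbf{U}_n(q)$, the $\beta$-set bookkeeping, the fact that one step of Harish--Chandra restriction through the Levi $\GL_1(q^2)\times\mathbf{U}_{n-2}(q)$ is governed (on unipotent constituents, with the signs $\varepsilon_\mu$ tracked) by removal of rim $2$-hooks, hence that the $2$-core classifies the Harish--Chandra series, that cuspidality amounts to $\mu$ being a staircase, and that the series through $\lambda_k$ is counted by partitions with $2$-core $\tau_k$, i.e.\ by bipartitions of $r=\tfrac12\bigl(n-\tfrac{k(k+1)}{2}\bigr)$. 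You also correctly isolate the one genuinely delicate point, which your outline (rightly) does not claim to get for free: iterated domino branching matches the branching of $\Irr(W_r)$ only up to the automorphism swapping the two components of a bipartition, which is exactly the parameter-$0$ versus parameter-$1$ ambiguity in the $2$-quotient; pinning down ``parameter $1$'' requires an extra rigidification of the Howlett--Lehrer isomorphism, e.g.\ the comparison of the degree of $\varepsilon_\mu R_\mu$ with the generic degree of $\chi_{\mu(0),\mu(1)}$ for the type $\mathbf{B}_r$ Hecke algebra with the unequal parameters attached to $(\mathbf{U}_{(k^2+k)/2}(q),\lambda_k)$, or a direct appeal to the explicit computation in \cite{Fong-Srinivasan}. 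So the outline is sound and follows the same route as the cited source, but be aware that the two steps carrying all the weight --- the domino branching rule for Harish--Chandra restriction and the normalization fixing the parameter --- are asserted rather than proved in your write-up, and a self-contained proof would have to carry them out.
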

This theorem tell us that for a fixed $k$ there is a bijection between the bipartitions of $\frac{1}{2}(n-k(k+1)/2)$, the representations in the Harish-Chandra series $\mathscr{R}(\mathbf{U}_n(q))_k$, and the partitions of $n$ with $2$-core $\tau_k$.  
\begin{prop}\cite[Lemma 5.8]{AMR}\label{betaset_parity_invariance}
Let $\mu'$ be the partition obtained from $\mu$ by removing a $2$-rimhook, then the $t$-set of $\beta$-numbers of $\mu'$ is equal to $\{\beta_1,\ldots,\beta_{j-1},\beta_j-2,\beta_{j+1},\ldots,\beta_t\}$, for a certain $j\leq t$. In particular, the $\beta$-sets of a partition and its $2$-core have the same number of even (resp. odd) elements. 
\end{prop}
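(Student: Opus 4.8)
The plan is to prove the two assertions in turn, the first being the combinatorial core and the second an immediate consequence. For the first assertion I would argue directly, distinguishing the two possible shapes of a border strip of size $2$: a horizontal domino or a vertical domino. In the horizontal case the removal affects a single row, say row $i$, so $\mu'_i=\mu_i-2$ and $\mu'_k=\mu_k$ for $k\neq i$, and the requirement that $\mu'$ be a partition forces $\mu_i-\mu_{i+1}\geq 2$. Passing to $\beta$-numbers via $\beta_k=\mu_k+t-k$ shows at once that $\beta^t_{\mu'}$ is obtained from $\beta^t_\mu$ by replacing the single entry $\beta_i$ with $\beta_i-2$, and the inequalities $\beta_{i+1}<\beta_i-2<\beta_i<\beta_{i-1}$ (which follow from $\mu_i-\mu_{i+1}\geq 2$ and $\mu_{i-1}\geq\mu_i$) confirm that the outcome is again a strictly decreasing $\beta$-set, with $\beta_i-2\geq 0$.

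In the vertical case the removed domino lies in one column across rows $i$ and $i+1$, so necessarily $\mu_i=\mu_{i+1}=:c$ and $\mu'_i=\mu'_{i+1}=c-1$, the other parts being unchanged, and the partition condition on $\mu'$ forces $\mu_{i+2}\leq c-1$. Here both $\beta_i$ and $\beta_{i+1}$ decrease by $1$. The key observation is that consecutive rows of equal length give $\beta_{i+1}=\beta_i-1$, whence $\{\beta_i-1,\beta_{i+1}-1\}=\{\beta_{i+1},\beta_i-2\}$; therefore passing from $\beta^t_\mu$ to $\beta^t_{\mu'}$ again amounts to replacing the single entry $\beta_i$ by $\beta_i-2$, i.e. exactly the asserted form with $j=i$. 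A short check, using $\mu_{i+2}\leq c-1$ and $\mu_{i-1}\geq c$, shows the new set is strictly decreasing and that $\beta_i-2\geq 0$. This settles the first assertion in all cases.

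For the ``in particular'' statement I would note that the $2$-core of $\mu$ is reached from $\mu$ by a finite chain of rim $2$-hook removals, each strictly decreasing $\lvert\mu\rvert$; along any such chain the number of nonzero parts can only decrease, so $t$ remains an admissible parameter throughout, and the final partition has no removable rim $2$-hook, which is the defining property of a $2$-core. By the first assertion, each step of the chain replaces one $\beta$-number by itself minus $2$, an operation that preserves the parity of that entry. Hence the multiset of parities of $\beta^t_\mu$ is invariant along the whole chain, so $\beta^t_\mu$ and the $t$-set of $\beta$-numbers of the $2$-core of $\mu$ contain the same number of even (resp. odd) elements.

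The main obstacle is purely one of bookkeeping: keeping the indices of the $\beta$-numbers straight in the vertical case and checking that each intermediate $\beta$-set is genuinely strictly decreasing (equivalently, that the intermediate $\mu'$ is a genuine partition). There is no deeper content here, since the statement is precisely the James abacus description of rim-hook removal specialized to hooks of size $2$, and the argument sketched above is a direct transcription of that bijection.
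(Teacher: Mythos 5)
Your argument is correct and complete: the case analysis of horizontal versus vertical dominoes, the observation that equal consecutive parts give $\beta_{i+1}=\beta_i-1$ so that both cases amount to replacing a single $\beta_j$ by $\beta_j-2$, and the parity-preservation along a chain of removals down to the $2$-core are exactly what is needed. Note that the paper does not prove this statement at all but cites it as \cite[Lemma 5.8]{AMR}; your proof is the standard James abacus argument that underlies that lemma, so there is nothing to reconcile beyond the fact that you have supplied a self-contained verification of a quoted result.
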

\begin{rmk}\label{OddandEvenElementsinBetaSet}
Let $\beta_k$ be the $t$-set of $\beta$-numbers of $\tau_k$, and $t_0=\lvert\beta_k(0)\rvert$ and $t_1=\lvert\beta_k(1)\rvert$ be its number of even and odd parts respectively. These two numbers depend on the parity of $k$ and $t$. For instance, let $t$ be odd. If $k$ is even, then $t_0=(t+k+1)/2$ and $t_1=(t-k-1)/2$; and if $k$ is odd then $t_0=(t-k)/2$ and  $t_1=(t+k)/2$.
\end{rmk}
\begin{prop}\label{2_quotient_fixed_length}
Let $\mu$ and $\mu'$ be two bipartitions of the same integer, and $t$ be odd. Then $\mu$ and $\mu'$ have the same $2$-core if and only if $|\beta^t_\mu(0)|=|\beta^t_{\mu'}(0)|$ and $|\beta^t_{\mu}(1)|=|\beta^t_{\mu'}(1)|$. 
\end{prop}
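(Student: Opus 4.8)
The plan is to reduce both implications to Proposition~\ref{betaset_parity_invariance}: removing a rim $2$-hook changes the $t$-set of $\beta$-numbers by decreasing one entry by $2$, hence preserves both the number of even entries and the number of odd entries, so a partition and its $2$-core have $\beta$-sets with the same $|\beta^t_{\cdot}(0)|$ and the same $|\beta^t_{\cdot}(1)|$. Throughout, $\mu$ and $\mu'$ are partitions of some integer $n$ and $t$ is a fixed odd integer at least as large as the number of parts of $\mu$ and of $\mu'$ (so that all the $\beta$-sets below are defined).

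First the direct implication: if $\mu$ and $\mu'$ have a common $2$-core $\tau$, then applying Proposition~\ref{betaset_parity_invariance} along the sequences of rim $2$-hook removals $\mu\rightsquigarrow\tau$ and $\mu'\rightsquigarrow\tau$ gives $|\beta^t_\mu(0)|=|\beta^t_\tau(0)|=|\beta^t_{\mu'}(0)|$ and $|\beta^t_\mu(1)|=|\beta^t_\tau(1)|=|\beta^t_{\mu'}(1)|$, as desired.

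For the converse, set $t_0=|\beta^t_\mu(0)|=|\beta^t_{\mu'}(0)|$ and $t_1=|\beta^t_\mu(1)|=|\beta^t_{\mu'}(1)|$, so $t_0+t_1=t$. Recall that the $2$-core of any partition is one of the staircases $\tau_k=(k,k-1,\dots,1)$, so it is pinned down by the single integer $k\geq 0$; write $\tau_k$ and $\tau_{k'}$ for the $2$-cores of $\mu$ and $\mu'$ (note $t\geq k$ and $t\geq k'$ since $\tau_k\subseteq\mu$ and $\tau_{k'}\subseteq\mu'$). By Proposition~\ref{betaset_parity_invariance} again, $|\beta^t_{\tau_k}(0)|=t_0=|\beta^t_{\tau_{k'}}(0)|$ and similarly for the odd parts, so it suffices to show that for fixed odd $t$ the pair $\bigl(|\beta^t_{\tau_k}(0)|,|\beta^t_{\tau_k}(1)|\bigr)$ determines $k$. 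The computation in Remark~\ref{OddandEvenElementsinBetaSet} (together with the identical computation when $k$ is odd) gives $|\beta^t_{\tau_k}(0)|-|\beta^t_{\tau_k}(1)|=k+1$ for $k$ even and $=-k$ for $k$ odd; as $k$ ranges over $\mathbb{N}$ these values are $1,-1,3,-3,5,-5,\dots$, pairwise distinct and exhausting the odd integers. Since $t_0-t_1$ is odd (because $t$ is), it recovers $k$, whence $k=k'$ and $\mu,\mu'$ share the $2$-core $\tau_k$.

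The only point carrying real content is this last injectivity of $k\mapsto\bigl(|\beta^t_{\tau_k}(0)|,|\beta^t_{\tau_k}(1)|\bigr)$ for $t$ odd, which is simply the extension of Remark~\ref{OddandEvenElementsinBetaSet} to both parities of $k$; once that is recorded, both directions are formal consequences of Proposition~\ref{betaset_parity_invariance}. A minor bookkeeping point is to fix $t$ large enough to be a legal length for all the $\beta$-sets involved, but this is forced by the hypothesis that $\beta^t_\mu$ and $\beta^t_{\mu'}$ are defined.
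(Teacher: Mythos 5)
Your proof is correct and follows essentially the same route as the paper: the forward direction is exactly the paper's appeal to Proposition~\ref{betaset_parity_invariance}, and the converse likewise reduces to comparing the even/odd counts of the $\beta$-sets of the staircase $2$-cores $\tau_k$, $\tau_{k'}$ via the formulas of Remark~\ref{OddandEvenElementsinBetaSet}. The only (cosmetic) difference is that where the paper runs a four-case parity comparison for $k<k'$, you package the same computation into the single invariant $|\beta^t_{\tau_k}(0)|-|\beta^t_{\tau_k}(1)|$, equal to $k+1$ for $k$ even and $-k$ for $k$ odd, whose injectivity in $k$ gives the conclusion.
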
 
\begin{proof}
Suppose that $\mu$ and $\mu'$ have the same $2$-core. Proposition \ref{betaset_parity_invariance} says that the number of even (resp. odd) elements in the $t$-sets of $\beta$-numbers of $\mu$ and $\mu'$ equal the number of even (resp. odd) elements in the $t$-set of $\beta$-numbers of the common $2$-core, so we have $\lvert\beta^t_\mu(0)\rvert=\lvert\beta^t_{\mu'}(0)\rvert$ and $\lvert\beta^t_{\mu}(1)\rvert=\lvert\beta^t_{\mu'}(1)\rvert$.
 	 	
If $\mu$ and $\mu'$ have different 2-cores $\tau_k$ and $\tau_{k'}$, assuming that $k<k'$ we have 4 cases depending on the parity of $k$ and $k'$. For instance, if they're both odd then $\lvert\beta_k(1)\rvert=(t+k)/2<(t+k')/2=\lvert\beta_{k'}(1)\rvert$ and $\lvert\beta_{k'}(0)\rvert=(t-k')/2<(t-k)/2=\lvert\beta_k(0)\rvert$, so by Proposition \ref{betaset_parity_invariance}, $\lvert\beta^t_{\mu'}(0)\rvert<\lvert\beta^t_\mu(0)\rvert$ and $\lvert\beta^t_\mu(1)\rvert<\lvert\beta^t_{\mu'}(1)\rvert$. The other 3 cases are analogous. 
\end{proof}
For the unitary dual pair $(\mathbf{U}_m(q),\mathbf{U}_{m'}(q))$, the Howe correspondence between unipotent Harish-Chandra series induces a correspondence for the pair $(W_{\frac{1}{2}(m-k(k+1)/2)}, W_{\frac{1}{2}(m'-\theta(k)(\theta(k)+1)/2)})$ of type $\mathbf{B}$ Weyl groups, where $\theta(k)$ is equal to $k-1$ or $k+1$ (see Theorem \ref{CuspidalUnipotentHowe}). 
 
Let $l=\frac{1}{2}(m-k(k+1)/2)$ and $l'=\frac{1}{2}(m'-\theta(k)(\theta(k)+1)/2)$. The correspondence for the pair $(W_l,W_{l'})$  is given by the representations $\Omega_{l,l'}$ introduced in Theorem \ref{ReductionWeilUnipotent}.  
\begin{defi}
Fix a bipartition  $(\xi',\eta')$ of $l'$. We denote by $\Theta(\xi',\eta')$ the set of all bipartitions $(\xi,\eta)$ of $l$, such that $\chi_{\xi,\eta}\otimes\chi_{\xi',\eta'}$ appears in $\Omega_{l,l'}$.
\end{defi}
Proposition \ref{Parametrization_HCseries_Unitary} provides a bijection between $\Irr(W_l)$ and $\mathscr{R}(\mathbf{U}_m(q))_k$, sending the irreducible representation $\chi_{\xi,\eta}$ to the unipotent character $\varepsilon_\mu R_{\mu}$, where $\mu=\mu_k(\xi,\eta)$ is the partition of $n$ with $\tau_k$ as 2-core and $(\xi,\eta)$ as 2-quotient of parameter 1.
\begin{defi}\label{def_min_u}
Fix a bipartition  $(\xi',\eta')$ of $l'$.
\begin{itemize}
\item[1.] We define a partial order on $\Theta(\xi',\eta')$ by $(\xi,\eta)\preceq(\xi^{''},\eta^{''})$ if and only if $\mu_{k}(\xi,\eta)\leq \mu_{k}(\xi^{''},\eta^{''})$.
\item[2.] A bipartition in $\Theta(\xi',\eta')$ is \emph{minimal} (resp. \emph{maximal}), if it is so for the order just defined.
\end{itemize}
\end{defi}
We show in the following sections that $\Theta(\xi',\eta')$ admits a unique minimal (resp. maximal) representation.

The explicit form of $\Omega_{l,l'}$ depends on the parity of $k$. Theorem \ref{ReductionWeilUnipotent} provides two cases, we study these separately. As for symplectic-orthogonal pairs, we suppose the pair to be in the stable range (with $\mathbf{U}_{m'}(q)$ the smaller), this condition implies that $l\geq 2l'$.
 
\subsubsection{First case}
Consider the pair $(\mathbf{U}_m(q),\mathbf{U}_{m'}(q))$ for $k$ odd or $k=\theta(k)=0$.  In these cases, the Howe correspondence is given by the representation $\Omega_{l,l'}$ given in Theorem \ref{ReductionWeilUnipotent}. Proposition \ref{Weyl_representation_tensor} allows us to write. 
$$
\Omega_{l,l'} = \sum_{r=0}^{\min(l,l')}\sum_{(\xi,\eta)\in\mathcal{P}_2(r)}\sum_{\xi',\eta'}\chi_{\xi',\eta}\otimes\chi_{\eta',\xi},
$$
the third sum being over partitions $\xi'$ and $\eta'$ of $l-\lvert\eta\rvert$ and $l'-\lvert\xi\rvert$,  such that $\xi\preceq\xi'$ and $\eta\preceq\eta'$. The proof of the following lemma is a consequence of the previous identity and Lemma \ref{TechnicalLemmaSOandUpairs}.
\begin{lem}\label{smallest_lexicographic_order_u1} 
Let $(\xi',\eta')$ be a bipartition of $l'$. 
\begin{itemize}
\item[1.] A bipartition $(\eta,\xi)$ belongs to $\Theta(\xi',\eta')$, if and only if $\eta'\preceq \eta$ and $\xi\preceq\xi'$.
\item[2.] The smallest element in $\Theta(\xi',\eta')$ for the natural order is $((l-l')\cup\eta',\xi')$.
\item[3.] The largest element in $\Theta(\xi',\eta')$ for the natural order is $((l-l'+\xi'_1+\eta'_1,\eta'_2,\cdots,\eta'_r),(\xi'_2,\cdots,\xi'_r))$. 
\end{itemize}
\end{lem}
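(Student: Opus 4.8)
The plan is to read item 1 off the displayed expansion of $\Omega_{l,l'}$ and to obtain items 2 and 3 from Lemma \ref{TechnicalLemmaSOandUpairs}, as the remark preceding the statement indicates. First I would rewrite that expansion with fresh dummy variables as $\Omega_{l,l'}=\sum_{r}\sum_{(\alpha,\beta)}\sum_{\gamma,\delta}\chi_{\gamma,\beta}\otimes\chi_{\delta,\alpha}$, the inner sums running over bipartitions $(\alpha,\beta)$ of $r\le\min(l,l')$, partitions $\gamma\vdash l-|\beta|$ with $\alpha\preceq\gamma$, and partitions $\delta\vdash l'-|\alpha|$ with $\beta\preceq\delta$. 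The component $\chi_{\eta,\xi}\otimes\chi_{\xi',\eta'}$ occurs precisely when one can match $\gamma=\eta$, $\beta=\xi$, $\delta=\xi'$, $\alpha=\eta'$; the degree bookkeeping $\gamma\vdash l-|\beta|$, $\delta\vdash l'-|\alpha|$ and $r=|\eta'|+|\xi|\le|\eta'|+|\xi'|=l'\le l$ is then automatic from $(\eta,\xi)\in\mathscr{P}_2(l)$ and $(\xi',\eta')\in\mathscr{P}_2(l')$, while the remaining constraints $\alpha\preceq\gamma$ and $\beta\preceq\delta$ read exactly $\eta'\preceq\eta$ and $\xi\preceq\xi'$. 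Since $\Omega_{l,l'}$ is a genuine representation, ``occurs'' means ``occurs with positive multiplicity'', and this gives the equivalence of item 1.

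Next I would check that the two candidate bipartitions of items 2 and 3 do belong to $\Theta(\xi',\eta')$, using the criterion just established. For $((l-l')\cup\eta',\xi')$ the condition $\xi'\preceq\xi'$ is trivial and $\eta'\preceq(l-l')\cup\eta'$ holds because the stable-range hypothesis $l\ge 2l'$ forces $l-l'\ge l'\ge\eta'_1$, so prepending a part of size $l-l'$ to $\eta'$ enlarges each column of the Young diagram by at most one box; the bidegree is clearly $l$. For $((l-l'+\xi'_1+\eta'_1,\eta'_2,\dots,\eta'_r),(\xi'_2,\dots,\xi'_r))$ the verification is equally routine: passing to the second component removes the first row of $\xi'$ and passing to the first component only lengthens a single row, so again no column grows by more than one box, and the bidegree is $l$.

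It remains to show these candidates are, respectively, the smallest and largest element of $\Theta(\xi',\eta')$ for the natural order of Definition \ref{lexicographical_order}. Let $(\eta,\xi)\in\Theta(\xi',\eta')$ be arbitrary and pad the parts with zeros so that $\xi$ and $\xi'$ have equally many parts while $\eta$ has exactly one more part than $\eta'$ --- possible since $\eta'\preceq\eta$ bounds the number of parts of $\eta$ and $\xi\preceq\xi'$ that of $\xi$. Then Lemma \ref{TechnicalLemmaSOandUpairs} applies under the dictionary sending its $(\xi,\eta,\xi',\eta')$ to our $(\eta,\xi,\eta',\xi')$. Each defining inequality of the natural order --- in both truncation shapes, one ending in a second-component part, one ending in a first-component part --- collapses, after collecting terms, into exactly one instance of that lemma, with $P$ and $Q$ taken to be initial segments whose sizes differ by $0$ or $1$. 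Part (1) of the lemma then yields the inequalities asserting $((l-l')\cup\eta',\xi')\le(\eta,\xi)$, and part (2) yields those asserting $(\eta,\xi)\le((l-l'+\xi'_1+\eta'_1,\eta'_2,\dots,\eta'_r),(\xi'_2,\dots,\xi'_r))$; the first-row entry $l-l'+\xi'_1+\eta'_1$ is precisely what reconstitutes the additive constant $l-l'+\eta'_1$ appearing in part (2) once the $\xi'$-contributions are reindexed by one. For indices beyond the padded lengths the inequalities are trivial. Since the natural order is a genuine partial order, a global minimum and a global maximum are automatically unique once exhibited, so items 2 and 3 follow.

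I expect the only genuinely delicate point to be this last translation step: keeping track of which of the two partitions forming a bipartition plays the ``growing'' role and which the ``shrinking'' role in Lemma \ref{TechnicalLemmaSOandUpairs} (whose Greek letters are permuted relative to the present statement), and verifying that the interleaved partial sums of Definition \ref{lexicographical_order} really do reduce to the $\sum_P+\sum_Q$ combinations bounded there, with the correct initial-segment choices for $P$ and $Q$. This is careful index bookkeeping rather than a conceptual obstacle; once it is done, existence in items 2 and 3 comes from Lemma \ref{TechnicalLemmaSOandUpairs} and uniqueness from antisymmetry of the natural order.
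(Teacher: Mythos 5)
Your argument is correct and is exactly the route the paper intends: the paper dismisses this lemma in one line as a consequence of the displayed expansion of $\Omega_{l,l'}$ and Lemma \ref{TechnicalLemmaSOandUpairs}, and your membership criterion in item 1 together with the initial-segment choices of $P$ and $Q$ (under the letter-swapping dictionary) supplies precisely the bookkeeping that outline leaves implicit.
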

\begin{thm}\label{order_phantom_characters_u1}
Let $(\xi',\eta')$ be a bipartition of $l'$. 
\begin{itemize}
\item[1.] The unique minimal representation in $\Theta(\xi',\eta')$ corresponds to the bipartition $((l-l')\cup\eta',\xi')$ of $l$.
\item[2.] The unique maximal representation in $\Theta(\xi',\eta')$ corresponds to the bipartition $((l-l'+\xi'_1+\eta'_1,\eta'_2,\cdots,\eta'_r),(\xi'_2,\cdots,\xi'_r))$ of $l$.
\end{itemize}
\end{thm}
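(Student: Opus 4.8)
The plan is to translate the dominance comparison between the partitions $\mu_k(\xi,\eta)$ into a comparison of $\beta$-sets, and to exploit that, by Proposition~\ref{Parametrization_HCseries_Unitary}, the assignment $(\xi,\eta)\mapsto\mu_k(\xi,\eta)$ is a \emph{bijection} from $\mathscr{P}_2(l)$ onto the set of partitions of $m$ with $2$-core $\tau_k$. This bijectivity settles uniqueness for free: as soon as we produce one bipartition $m_0\in\Theta(\xi',\eta')$ with $\mu_k(m_0)\le\mu_k(x)$ in dominance order for every $x\in\Theta(\xi',\eta')$, no other element of $\Theta(\xi',\eta')$ can map to $\mu_k(m_0)$, so $m_0$ is automatically the unique minimal element of $\Theta(\xi',\eta')$ for the order of Definition~\ref{def_min_u}; symmetrically for the maximal one. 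By Lemma~\ref{smallest_lexicographic_order_u1} the natural candidates are $m_0=((l-l')\cup\eta',\xi')$ and $m_1=((l-l'+\xi'_1+\eta'_1,\eta'_2,\dots,\eta'_r),(\xi'_2,\dots,\xi'_r))$, the extremal elements of $\Theta(\xi',\eta')$ for the natural order, and item~1 of that lemma identifies $\Theta(\xi',\eta')$ with the set of bipartitions $(\alpha,\beta)$ such that $\eta'\preceq\alpha$ and $\beta\preceq\xi'$. So everything comes down to the two dominance inequalities $\mu_k(m_0)\le\mu_k(\alpha,\beta)$ and $\mu_k(\alpha,\beta)\le\mu_k(m_1)$ for all such $(\alpha,\beta)$.

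For the $\beta$-set mechanism, fix an odd $t$ large enough that every relevant partition has at most $t$ parts. By Proposition~\ref{2_quotient_fixed_length} the $t$-sets of $\beta$-numbers of all the $\mu_k(\alpha,\beta)$, $(\alpha,\beta)\in\Theta(\xi',\eta')$, have the same number $t_0$ of even and $t_1$ of odd entries. For a bipartition $(\gamma,\delta)$ the $t$-set of $\beta$-numbers of $\mu_k(\gamma,\delta)$ is, with the bookkeeping of Proposition~\ref{Parametrization_HCseries_Unitary}, the disjoint union of the strictly decreasing sequence $\{2\delta_j+2(t_0-j)\}_{1\le j\le t_0}$ and the strictly decreasing sequence $\{2\gamma_i+2(t_1-i)+1\}_{1\le i\le t_1}$. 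Writing $f_{(\gamma,\delta)}(s)$ for the sum of the $s$ largest $\beta$-numbers of $\mu_k(\gamma,\delta)$: since each sequence is decreasing, the $s$ largest entries of the union form a prefix of each, whence
$$
f_{(\gamma,\delta)}(s)=\max_{\substack{p+r=s\\ 0\le p\le t_0,\ 0\le r\le t_1}}\Big(2\sum_{j\le p}\delta_j+2\sum_{i\le r}\gamma_i+\phi(p,r)\Big),
$$
where $\phi(p,r)$ depends only on $p,r,t_0,t_1$. Because dominance order between partitions whose $\beta$-sets have the same length is equivalent to the inequalities $f_{(\gamma,\delta)}(s)\le f_{(\gamma'',\delta'')}(s)$ for all $s$, and since here even $t_0$ and $t_1$ are common, the theorem reduces to inequalities between such maxima.

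To get $\mu_k(m_0)\le\mu_k(\alpha,\beta)$: given $s$, choose a split $(p_0,r_0)$ realising $f_{m_0}(s)$ and compare the value at $(p_0,r_0)$ for $m_0$ with the value at the \emph{same} split for $(\alpha,\beta)$. After cancelling the common term $\phi(p_0,r_0)$ this reduces to
$$
\sum_{j\le p_0}\xi'_j+\sum_{i\le r_0}\big((l-l')\cup\eta'\big)_i\ \le\ \sum_{j\le p_0}\beta_j+\sum_{i\le r_0}\alpha_i,
$$
and, using $\lvert\alpha\rvert+\lvert\beta\rvert=l$ and $\lvert\xi'\rvert+\lvert\eta'\rvert=l'$, this is in turn equivalent to $\sum_{j>p_0}\beta_j+\sum_{i>r_0}\alpha_i\le\sum_{j>p_0}\xi'_j+\sum_{i\ge r_0}\eta'_i$, which holds termwise since $\beta\preceq\xi'$ gives $\beta_j\le\xi'_j$ and $\eta'\preceq\alpha$ gives $\alpha_{i+1}\le\eta'_i$ — precisely the kind of inequality recorded in the first item of Lemma~\ref{TechnicalLemmaSOandUpairs}. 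The inequality $\mu_k(\alpha,\beta)\le\mu_k(m_1)$ is proved the same way, now comparing at a split realising $f_{(\alpha,\beta)}(s)$ and invoking the second item of Lemma~\ref{TechnicalLemmaSOandUpairs}.

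The step I expect to be delicate is the treatment of degenerate splits. When the split $(p_0,r_0)$ realising $f_{m_0}(s)$ has $r_0=0$ — which can occur only when $t_0>t_1$, that is in the principal series case $k=\theta(k)=0$, since then the largest $\beta$-number of $\mu_k(m_0)$ may be the even one coming from $\xi'_1$ — the comparison at equal splits fails (one would bound $\sum_{j\le p_0}\xi'_j$ by the smaller $\sum_{j\le p_0}\beta_j$), and one must instead compare $m_0$ at $(p_0,0)$ with $(\alpha,\beta)$ at the shifted split $(p_0-1,1)$, controlling the discarded even $\beta$-number of $m_0$ by the leading odd $\beta$-number of $\mu_k(\alpha,\beta)$; this is where the stable-range hypothesis $l\ge 2l'$, bounding $l-l'$ against the parts of $\xi'$ and against the shift $t_0-t_1$, is used. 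A mirror-image degenerate case occurs for $m_1$. Organising this bookkeeping — and checking that a realising split for $f_{m_0}(s)$ can always be chosen so that the comparison split stays admissible — is the technical heart; the rest is the formal machinery above. (Alternatively one can try to prove outright that $(\xi,\eta)\mapsto\mu_k(\xi,\eta)$ is order preserving from the natural order to the dominance order, in the spirit of Proposition~\ref{prop_springer_ordre_so1} for the symplectic–orthogonal first case; the same split analysis is needed, after which Lemma~\ref{smallest_lexicographic_order_u1} yields the theorem at once.)
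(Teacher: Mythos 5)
Your route is essentially the paper's: describe $\Theta(\xi',\eta')$ by Lemma \ref{smallest_lexicographic_order_u1}, pass to $t$-sets of $\beta$-numbers with the common counts $t_0,t_1$ of even and odd entries (Propositions \ref{Parametrization_HCseries_Unitary} and \ref{2_quotient_fixed_length}), compare partial sums of $\beta$-numbers at a common split into an even prefix and an odd prefix, and reduce to the termwise inequalities packaged in Lemma \ref{TechnicalLemmaSOandUpairs} (item 1 for the minimal, item 2 for the maximal candidate). Your remark that uniqueness is free because $(\xi,\eta)\mapsto\mu_k(\xi,\eta)$ is injective makes explicit what the paper leaves implicit, and in the non-degenerate situation (realising split whose prefix in the component containing the part $l-l'$ is non-empty) your same-split comparison is exactly the paper's computation, which always anchors the split at the entry $2(l-l'+t'_1+1)$.

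There is, however, a genuine gap, and it sits exactly where you flag it but not where you locate it. You claim the degenerate split can occur only when $t_0>t_1$, i.e.\ for $k=\theta(k)=0$; that is backwards. For $k=\theta(k)=0$ one has $t_0=t_1+1$ and $l-l'\ge l'\ge\xi'_1$, so the largest $\beta$-number of $\mu_k(m_0)$ is the even one and no degeneracy arises; the same holds whenever $\theta(k)=k+1$, since then the stable range forces $l-l'\ge\xi'_1+k+1$. The dangerous configuration is the other sub-case $\theta(k)=k-1$ (which the paper itself dismisses with ``can be treated in a similar way''): there $t_1-t_0=k>0$, $l-l'$ need not exceed $\xi'_1+k$, and the top $\beta$-number of $\mu_k(m_0)$ can be odd. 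Concretely, for $k=1$, $\theta(k)=0$, $m=5$, $m'=2$, $(\xi',\eta')=((1),\emptyset)$ one has $\Theta(\xi',\eta')=\{((2),\emptyset),((1),(1))\}$ and, with the conventions of Proposition \ref{Parametrization_HCseries_Unitary} and Remark \ref{OddandEvenElementsinBetaSet} taken literally ($\mu(0)$, built from the even entries, is the \emph{first} component --- note your proposal has this assignment reversed), $\mu_k((1),(1))=(3,1,1)$ while $\mu_k((2),\emptyset)=(2,2,1)$, so the naive comparison actually reverses. Hence in that sub-case the issue is not a routine ``shifted split'' that can be controlled by $l\ge 2l'$ alone: one must redo the parity bookkeeping (which component has $t_0$ parts, which has $t_1$, as dictated by Remark \ref{OddandEvenElementsinBetaSet}) before any split comparison makes sense, and your proposal, which only sketches this step, does not supply the argument that constitutes the real content of the theorem beyond the case the paper works out in detail.
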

\begin{proof}
Take $(\eta,\xi)$ in $\Theta(\xi',\eta')$, let $\mu=\mu_k(\eta,\xi)$, and $\mu_{\mathrm{m}}=\mu_k((l-l')\cup\eta',\xi')$. We intent prove that $\mu_{\mathrm{m}}\leq\mu$. 

Let $t'_0$ (resp. $t'_1$) be the number of parts of $\xi'$ (resp. $\eta'$). By adding zeroes (if necessary) we can assume that $t'=t'_0+t'_1$ is odd, $\eta$ has $t'_1+1$ parts, and $\xi$ has $t'_0$ parts. Therefore, if we are to use $(\eta,\xi)$ as a 2-quotient of parameter 1, we must add a zero to either $\eta$ or $\xi$, so the total number of parts becomes $t=t'+2$ (odd).

Let $k'=\theta(k)$ be even and $k=k'-1$, or $k'$ be odd and $k=k'+1$. Using the formulas in Remark \ref{OddandEvenElementsinBetaSet}, the number of parts of $\eta=\mu(0)$ (resp. $(l-l')\cup\eta'=\mu_{\mathrm{m}}(0)$) should equal $t_0=t'_1+2$, and those of $\xi=\mu(1)$ should be $t_1=t'_0$. Therefore we must add a zero to $\eta$ (resp. $(l-l')\cup\eta'$).

Let $\beta$ (resp. $\beta_{\mathrm{m}}$) be the $t$-set of $\beta$-numbers of $\mu$ (resp. $\mu_{\mathrm{m}}$), a simple calculation shows that
\begin{align*}
\beta=\{2(\xi_1+t'_0)-1,\ldots, 2\xi_{t'_0}+1,2(\eta_1+t'_1+1),\ldots,2(\eta_{t'_1+1}+1),0\},\\
\beta_{\mathrm{m}}=\{2(\xi'_1+t'_0)-1,\ldots, 2\xi'_{t'_0}+1,2(l-l'+t'_1+1),\ldots,2(\eta'_{t'_1}+1),0\}.
\end{align*}
Let $\beta_1>\cdots>\beta_t$ (resp. $\beta_{\mathrm{m},1}>\cdots>\beta_{\mathrm{m},t}$) the elements of $\beta$ (resp. $\beta_{\mathrm{m}}$) after reordering. It is easy to see that $\mu_{\mathrm{m}}\leq\mu$ if and only if $\beta_1+\cdots+\beta_k\leq\beta_{\mathrm{m},1}+\cdots+\beta_{\mathrm{m},k}$, for $k=1,\ldots, t$.

Let $k\in\{1,\ldots,t\}$. There exist $r\in\{1,\ldots,t'_1\}$, and $s\in\{1,\ldots,t'_0\}$ such that $\beta_{\mathrm{m},1}+\cdots+\beta_{\mathrm{m},k}$ equals
$$
2(l-l'+t'_1+1) + \sum_{i=1}^r 2(\eta'_i+t'_1-i+1)+\sum_{i=1}^s 2(\xi'_i+t'_0-i)+1.
$$
The right side in the above inequality is smaller than
$$
2(\eta_1+t'_1+1) + \sum_{i=1}^r 2(\eta_{i+1}+t'_1-i+1)+\sum_{i=1}^s 2(\xi_i+t'_0-i)+1,
$$
by item 1 of Lemma \ref{TechnicalLemmaSOandUpairs}, and the last sum is smaller than $\beta_1+\cdots+\beta_k$. The case $k'$ odd and $k=k'+1$, or $k'$ even and $k=k'-1$ can be treated in a similar way. The same arguments yield item 2.
\end{proof}
	
\subsubsection{Second case}
Consider the pair $(\mathbf{U}_m(q),\mathbf{U}_{m'}(q))$ for $k$ even and different from zero. In these cases, the Howe correspondence is given by the representation $\Omega_{l,l'}$ given in Theorem \ref{ReductionWeilUnipotent}. Once again, Proposition \ref{Weyl_representation_tensor} allows us to write. 
$$
\Omega_{l,l'} = \sum_{r=0}^{\min(l,l')}\sum_{(\xi,\zeta)\in\mathcal{P}_2(r)}\sum_{\eta,\eta'}\chi_{\xi,\eta}\otimes\chi_{\eta',\xi},
$$
where the third sum is over partitions $\eta$ and $\eta'$, of $l-\lvert\xi\rvert$ and $l'-\lvert\xi\rvert$, such that $\zeta\preceq\eta$ and $\zeta\preceq\eta'$.
\begin{lem}\label{smallest_lexicographic_order_u2}
Let $(\xi',\eta')$ be a bipartition of $l'$.  
\begin{itemize}	
\item[1.] A bipartition $(\eta,\xi)$ of $l$ belongs to $\Theta(\xi',\eta')$, if and only if, $\xi$ and $\xi'$ have a common predecesor for $\preceq$, and $\eta'=\eta$.
\item[2.] The smallest element in $\Theta(\xi',\eta')$ for the natural order is $(\eta',(l-l')\cup\xi')$.
\item[3.] The largest element in $\Theta(\xi',\eta')$ for the natural order is $(\eta',(l-l'+\xi'_1+\xi'_2,\xi'_3,\ldots,\xi'_r))$.
\end{itemize}
\end{lem}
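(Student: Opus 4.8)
The plan is to follow the proof of Lemma \ref{small_so1}, of which this statement is the verbatim analogue with the two components of the bipartition interchanged; in particular the Springer correspondence plays no role here, and --- unlike Lemmas \ref{so2set} and \ref{smallest_lexicographic_order_u1} --- not even Lemma \ref{TechnicalLemmaSOandUpairs} is needed: everything reduces to elementary partition combinatorics together with the stable-range inequality $l\ge 2l'$.

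For item 1 I would read the condition straight off the expansion of $\Omega_{l,l'}$ displayed just before the statement. A generic summand is $\chi_{\xi,\eta}\otimes\chi_{\eta',\xi}$ with dummy indices, which clash with the fixed bipartition $(\xi',\eta')$; I would first rename them, say $\chi_{\alpha,\sigma}\otimes\chi_{\gamma,\alpha}$, where $\sigma$ runs over partitions of $l-\lvert\alpha\rvert$, $\gamma$ over partitions of $l'-\lvert\alpha\rvert$, subject to the existence of a partition $\zeta$ with $\zeta\preceq\sigma$ and $\zeta\preceq\gamma$. Matching $\chi_{\gamma,\alpha}=\chi_{\xi',\eta'}$ forces $\gamma=\xi'$ and $\alpha=\eta'$, so the first factor becomes $\chi_{\eta',\sigma}$ with $\sigma$ a partition of $l-\lvert\eta'\rvert=(l-l')+\lvert\xi'\rvert$ admitting a partition $\zeta$ with $\zeta\preceq\sigma$ and $\zeta\preceq\xi'$. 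Writing a general element of $\Theta(\xi',\eta')$ as $(\eta,\xi)$, this says precisely that $\eta=\eta'$ and that $\xi$ and $\xi'$ have a common $\preceq$-predecessor; one checks that the corresponding index $r=\lvert\eta'\rvert+\lvert\zeta\rvert$ always satisfies $r\le l'\le\min(l,l')$, so the constraint $r\le\min(l,l')$ discards nothing.

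For items 2 and 3 I would first observe that, since every element of $\Theta(\xi',\eta')$ has first component $\eta'$, the natural order of Definition \ref{lexicographical_order} restricted to $\Theta(\xi',\eta')$ collapses to the dominance order on the second component (the contributions of the fixed first component cancel in both families of inequalities defining the natural order). It therefore suffices to locate, among the partitions $\xi$ of $N:=(l-l')+\lvert\xi'\rvert$ having a common $\preceq$-predecessor with $\xi'$, the dominance-minimal and dominance-maximal ones. Here I would use the elementary equivalence: $\xi$ and $\xi'$ have a common $\preceq$-predecessor if and only if $\xi_{i+1}\le\xi'_i$ and $\xi'_{i+1}\le\xi_i$ for all $i$ (necessity is immediate; for sufficiency take $\zeta_i=\min(\xi_i,\xi'_i)$). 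From $\xi_{i+1}\le\xi'_i$ and $\lvert\xi\rvert=N$ one gets $\sum_{i\le k}\xi_i\ge(l-l')+\sum_{i\le k-1}\xi'_i$ for all $k$, i.e.\ $\xi\ge(l-l')\cup\xi'$ for dominance; since $(l-l')\cup\xi'$ itself satisfies the interlacing inequalities (using $l-l'\ge l'\ge\lvert\xi'\rvert\ge\xi'_1$ from the stable range), it lies in $\Theta(\xi',\eta')$ and is the minimum, which is item 2. Dually, from $\xi'_{i+1}\le\xi_i$ and $\lvert\xi\rvert=N$ one gets $\xi_1\le(l-l')+\xi'_1+\xi'_2$ and $\sum_{i\le k}\xi_i\le(l-l')+\sum_{i\le k+1}\xi'_i$ for $k\ge 2$, i.e.\ $\xi\le(l-l'+\xi'_1+\xi'_2,\xi'_3,\dots,\xi'_r)$ for dominance; that partition again satisfies the interlacing inequalities, hence is the maximum, which is item 3.

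I do not expect a genuine obstacle. The two points that require a little care are the innocuous renaming that separates the dummy $\eta'$ appearing in the expansion of $\Omega_{l,l'}$ from the fixed bipartition $(\xi',\eta')$ in item 1, and the remark that the natural order degenerates to dominance on the free component once the first component is pinned; after these, items 2 and 3 are the same partition bookkeeping already carried out in Lemma \ref{small_so1}.
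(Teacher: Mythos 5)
Your proposal is correct: the paper in fact states Lemma \ref{smallest_lexicographic_order_u2} without an explicit proof, and the intended argument is precisely the one you give, mirroring Lemma \ref{small_so1} with the two components interchanged (item 1 read off from the displayed expansion of $\Omega_{l,l'}$, items 2 and 3 by the interlacing inequalities from a common $\preceq$-predecessor together with $\lvert\xi\rvert=(l-l')+\lvert\xi'\rvert$ and the stable-range bound $l\geq 2l'$). Your additional checks --- that the natural order collapses to dominance on the free component and that the claimed extremal bipartitions actually lie in $\Theta(\xi',\eta')$ --- are correct and slightly more careful than what the paper records.
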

\begin{prop}\label{prop_order_phantom_characters_u2}
Let $(\eta,\xi)$ and $(\eta,\xi')$ be two bipartitions of $l$. If $(\eta,\xi)\leq(\eta,\xi')$ then $\mu_k(\eta,\xi)\leq\mu_k(\eta,\xi')$.
\end{prop}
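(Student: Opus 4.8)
The plan is to translate the statement into an inequality between $\beta$-sets and reduce it to an elementary ``shuffle'' estimate, in the spirit of the proof of Proposition~\ref{prop_springer_ordre_so1}. First I unwind the hypothesis: since $(\eta,\xi)$ and $(\eta,\xi')$ are bipartitions of the same integer $l$ with the same first component, we have $|\xi|=|\xi'|=l-|\eta|$, and in the two families of inequalities of Definition~\ref{lexicographical_order} the $\eta$-terms cancel on both sides; hence $(\eta,\xi)\leq(\eta,\xi')$ is equivalent to $\xi_1+\cdots+\xi_j\leq\xi'_1+\cdots+\xi'_j$ for all $j$, i.e.\ to $\xi\leq\xi'$ in the dominance order on partitions of $l-|\eta|$.

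Next I record the $\beta$-set of $\mu_k(\eta,\xi)$. Fix an odd integer $t$ large enough, let $t_0$ and $t_1$ be the numbers of even, resp.\ odd, $\beta$-numbers imposed by $\tau_k$ and $t$ (Remark~\ref{OddandEvenElementsinBetaSet}), and pad $\eta$ to $t_0$ parts and $\xi,\xi'$ to $t_1$ parts with zeros. Since $\mu_k(\eta,\xi)$ has $(\mu(0),\mu(1))=(\eta,\xi)$ as $2$-quotient of parameter $\overline{t}=1$, its $t$-set of $\beta$-numbers is the disjoint union
\[
\beta^t_{\mu_k(\eta,\xi)}=A\sqcup B_\xi,\qquad A=\{\,2(\eta_i+t_0-i)\,\}_{i=1}^{t_0},\quad B_\xi=\{\,2(\xi_j+t_1-j)+1\,\}_{j=1}^{t_1},
\]
and likewise $\beta^t_{\mu_k(\eta,\xi')}=A\sqcup B_{\xi'}$ with the very same $A$, because $\eta$, $t_0$ and $t_1$ are common to both bipartitions (and $\mu_k$ is independent of $t$ within its parity class). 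The two pieces are disjoint ($A$ is even, $B_\xi,B_{\xi'}$ are odd) and within each the listed elements strictly decrease; moreover $\sum_{j=1}^{q}\bigl(2(\xi_j+t_1-j)+1\bigr)=2(\xi_1+\cdots+\xi_q)+c(q)$ with $c(q)$ independent of $\xi$, so $\xi\leq\xi'$ says exactly that for every $q$ the sum of the $q$ largest elements of $B_\xi$ is at most that of $B_{\xi'}$, and also $\sum B_\xi=\sum B_{\xi'}$.

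The core step is the elementary identity: for disjoint finite sets of integers $A$ and $C$, the sum of the $j$ largest elements of $A\sqcup C$ equals $\max_{p+q=j}(\alpha_p+\gamma_q)$, where $\alpha_p$ (resp.\ $\gamma_q$) is the sum of the $p$ (resp.\ $q$) largest elements of $A$ (resp.\ $C$) --- immediate, since any choice of $j$ elements of $A\sqcup C$ splits into $p$ from $A$ and $q=j-p$ from $C$, and the best split is realized greedily. Applying this to $A\sqcup B_\xi$ and $A\sqcup B_{\xi'}$ with the $A$-parts contributing the same $\alpha_p$ and $\gamma_q(B_\xi)\leq\gamma_q(B_{\xi'})$ for all $q$, we get that for every $j$ the sum of the $j$ largest elements of $\beta^t_{\mu_k(\eta,\xi)}$ is at most the sum of the $j$ largest elements of $\beta^t_{\mu_k(\eta,\xi')}$. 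As both $\beta$-sets have cardinality $t$ and equal total sum (so the two partitions have the same size), this is exactly $\mu_k(\eta,\xi)\leq\mu_k(\eta,\xi')$, since the dominance order on partitions translates verbatim into the partial-sum order on $\beta$-sets of a fixed length.

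I expect the only genuine difficulty to be organizational: pinning down $t_0,t_1$ from the parities of $k$ and $t$ through Remark~\ref{OddandEvenElementsinBetaSet}, and verifying that the zero-padding and the (harmless) freedom in choosing $t$ leave the fixed part $A$ literally unchanged, so that the shuffle estimate applies as stated. With that bookkeeping done, the rest is routine.
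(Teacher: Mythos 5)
Your proposal is correct and follows essentially the same route as the paper: reduce the hypothesis to $\xi\leq\xi'$ in dominance, write the $t$-sets of $\beta$-numbers of $\mu_k(\eta,\xi)$ and $\mu_k(\eta,\xi')$ as a common even part coming from $\eta$ together with odd parts $2(\xi_j+t_1-j)+1$, and compare partial sums of the reordered $\beta$-sets by splitting each top-$k$ sum into its $\eta$- and $\xi$-contributions. Your ``greedy split'' identity is just a slightly more formal phrasing of the paper's choice of $r$ and $s$ with $r+s=k$, so the two arguments coincide.
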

\begin{proof}
Call $t_0$ the number of parts of $\eta$, let $\mu=\mu_k(\eta,\xi)$, and $\mu'=\mu_k(\eta,\xi')$. As $\mu$ and $\mu'$ have the same 2-core, we can suppose $\xi$ and $\xi'$ to have the same number of parts $t_1$, so that the $t$-sets of $\beta$-numbers of $\mu$ and $\mu'$ are
$$
\beta=\{2(\eta_1+t_0-1),\ldots,2\eta_{t_0},2(\xi_1+t_1)-1,\ldots,2\xi_{t_1}+1\},
$$
and
$$
\beta'=\{2(\eta_1+t_0-1),\ldots,2\eta_{t_0},2(\xi'_1+t_1)-1,\ldots,2\xi'_{t_1}+1\}.
$$	
Suppose that, after ordering, the elements of $\beta$ (resp. $\beta'$) are $\beta_1>\cdots>\beta_t$, (resp. $\beta'_1>\cdots>\beta'_t$) for $t=t_0+t_1$. It is easy to see that $\mu\leq\mu'$ if and only if $\beta_1+\cdots+\beta_k\leq\beta_{\mathrm{m},1}+\cdots+\beta_{\mathrm{m},k}$, for $k=1,\ldots, t$.

Let $k\in\{1,\ldots,t\}$. We can find non negatives integers $r\in\{1,\ldots,t_0\}$ and $s\in\{1,\ldots,t_1\}$, such that $\beta_1+\cdots+\beta_k$ is equal to
$$
\sum_{i=1}^r 2(\eta_i+t_0-i)+\sum_{i=1}^s 2(\xi_i+t_1-i)+1.
$$
The condition $(\eta,\xi)\leq(\eta,\xi')$ (i.e.\ $\xi\leq\xi'$) implies that the previous sum is smaller or equal than 
$$
\sum_{i=1}^r 2(\eta_i+t_0-i)+\sum_{i=1}^s 2(\xi'_i+t_1-i)+1,
$$
which in turn is smaller than $\beta'_1+\cdots+\beta'_k$. 
\end{proof}
This proposition tells us that the map sending $(\eta,\xi)$ to $\mu_k(\eta,\xi)$ (for $k$ fixed) is increasing when restricted to $\Theta(\xi',\eta')$. These remarks and Lemma \ref{smallest_lexicographic_order_u2} imply the following :
\begin{thm}\label{order_phantom_characters_u2}
Let $(\xi',\eta')$ be a bipartition of $l'$. 
\begin{itemize}
\item[1.] The unique minimal representation in $\Theta(\xi',\eta')$ is given by the bipartition $(\eta',(l-l')\cup\xi')$.
\item[2.] The unique maximal representation in $\Theta(\lambda,\mu)$ is given by the bipartition $(\eta',(l-l'+\xi'_1+\xi'_2,\xi'_3,\ldots,\xi'_r))$. 
\end{itemize}

\end{thm}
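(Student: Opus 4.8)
The plan is to derive the theorem as an essentially formal consequence of Lemma~\ref{smallest_lexicographic_order_u2} and Proposition~\ref{prop_order_phantom_characters_u2}, with the uniqueness statement upgraded by the injectivity of $\mu_k$. First I would record what Lemma~\ref{smallest_lexicographic_order_u2}(1) gives us: every element of $\Theta(\xi',\eta')$ is a bipartition of the form $(\eta',\xi)$ with the \emph{same} first component $\eta'$, the second component $\xi$ ranging over the partitions admitting a common $\preceq$-predecessor with $\xi'$. Hence Proposition~\ref{prop_order_phantom_characters_u2}, which compares $\mu_k(\eta,\xi)$ and $\mu_k(\eta,\xi')$ precisely when the first components coincide, applies to every pair of elements of $\Theta(\xi',\eta')$: if $(\eta',\xi)\leq(\eta',\xi'')$ for the natural order, then $\mu_k(\eta',\xi)\leq\mu_k(\eta',\xi'')$ for the dominance order on partitions. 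Thus, restricted to $\Theta(\xi',\eta')$, the map $(\eta',\xi)\mapsto\mu_k(\eta',\xi)$ is order-preserving from the natural order into the order of Definition~\ref{def_min_u}.

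Granting this, existence is immediate. By Lemma~\ref{smallest_lexicographic_order_u2}(2) the bipartition $(\eta',(l-l')\cup\xi')$ lies below every element of $\Theta(\xi',\eta')$ for the natural order; applying the monotonicity just established yields $\mu_k(\eta',(l-l')\cup\xi')\leq\mu_k(\eta,\xi)$ for every $(\eta,\xi)\in\Theta(\xi',\eta')$, which is exactly the condition in Definition~\ref{def_min_u} for $(\eta',(l-l')\cup\xi')$ to be a minimal representation. Running the same argument with Lemma~\ref{smallest_lexicographic_order_u2}(3) shows that $(\eta',(l-l'+\xi'_1+\xi'_2,\xi'_3,\ldots,\xi'_r))$ is a maximal representation.

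For uniqueness I would invoke Proposition~\ref{Parametrization_HCseries_Unitary}: for fixed $k$ the assignment $(\eta,\xi)\mapsto\mu_k(\eta,\xi)$ is a bijection between bipartitions of $l$ and partitions of $m$ with $2$-core $\tau_k$, hence injective. Therefore, for any $(\eta,\xi)\in\Theta(\xi',\eta')$ different from $(\eta',(l-l')\cup\xi')$ we have $\mu_k(\eta',(l-l')\cup\xi')\neq\mu_k(\eta,\xi)$, and combined with the inequality $\mu_k(\eta',(l-l')\cup\xi')\leq\mu_k(\eta,\xi)$ this forces the inequality to be strict; so $(\eta',(l-l')\cup\xi')$ is in fact the minimum for the order of Definition~\ref{def_min_u}, hence its unique minimal element. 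The argument for the maximal representation is identical.

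The substance of the theorem has thus already been absorbed into the two auxiliary results, so at this level there is no real obstacle; the two points that do require care are that the order of Definition~\ref{def_min_u} on $\Theta(\xi',\eta')$ is a priori only partial, so that a natural-order extremum becomes a $\mu_k$-order extremum only by virtue of the monotonicity in Proposition~\ref{prop_order_phantom_characters_u2}, and that it becomes a \emph{unique} extremum only by virtue of the injectivity of $\mu_k$. Were one to reprove Proposition~\ref{prop_order_phantom_characters_u2} directly, the work — and the genuine obstacle — would lie in translating the dominance order on $\mu_k(\eta,\xi)$ into inequalities among the reordered $t$-sets of $\beta$-numbers and tracking how enlarging $\xi$ permutes those $\beta$-numbers; but that computation is precisely the one performed there, and is available to us.
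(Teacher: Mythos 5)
Your proposal is correct and follows essentially the same route as the paper, which likewise deduces the theorem directly from Lemma~\ref{smallest_lexicographic_order_u2} and the monotonicity in Proposition~\ref{prop_order_phantom_characters_u2} (the paper states this implication without further argument). Your only addition is to make the uniqueness step explicit via the injectivity of $(\eta,\xi)\mapsto\mu_k(\eta,\xi)$ from Proposition~\ref{Parametrization_HCseries_Unitary}, which is a sound and welcome clarification of what the paper leaves implicit.
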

\subsubsection{Results in terms of partitions}
In the previous two sections, we expressed the Howe correspondence between Harish-Chandra series as a correspondence between bipartitions (the so called ``2-quotients''). Moreover, we were able to exhibit extremal representations (seen as bipartitions) issued from this correspondence. Since Proposition \ref{Parametrization_HCseries_Unitary} also provides a parametrization of Harish-Chandra series in terms of partitions having the same 2-core, it is then natural to ask if these results can be expressed in terms of partitions.

In this section we provide a satisfactory answer for minimal representations whenever $\theta(k)=k+1$. As in the previous section we assume this pair to be in the stable range (with $\U_{m'}(q)$ smaller).
\begin{thm}
Let $(\U_m(q),\U_{m'}(q))$ be a dual pair. Then, the minimal representation corresponding to $\epsilon_{\mu'} R_{\mu'}$ is $\epsilon_{(m-m')\cup\mu'} R_{(m-m')\cup\mu'}$, provided that $\theta(k)=k+1$.
\end{thm}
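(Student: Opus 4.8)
The plan is to convert the statement into the combinatorics of $\beta$-numbers and then invoke Theorems \ref{order_phantom_characters_u1} and \ref{order_phantom_characters_u2}. By Proposition \ref{Parametrization_HCseries_Unitary}, $\varepsilon_{\mu'}R_{\mu'}$ belongs to $\mathscr{R}(\U_{m'}(q))_{k'}$ with $k'=\theta(k)=k+1$, has $2$-core $\tau_{k+1}$, and corresponds under the Howlett--Lehrer bijection to the bipartition $(\mu'(0),\mu'(1))$ of $l'$ formed by its $2$-quotients of parameter $1$; likewise a unipotent character $\varepsilon_\mu R_\mu$ of $\U_m(q)$ corresponds to the pair $(\mu(0),\mu(1))$ of $l$ and lies in $\mathscr{R}(\U_m(q))_k$ precisely when its $2$-core is $\tau_k$. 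Since $\theta(k)=k+1$ forces $k'\neq 0$, Theorem \ref{ReductionWeilUnipotent} gives formula (\ref{u1}) when $k$ is odd and formula (\ref{u2}) when $k$ is even (both can occur for $\theta(k)=k+1$), so we are respectively in the first and the second case treated above. Hence, by Theorem \ref{order_phantom_characters_u1} when $k$ is odd and Theorem \ref{order_phantom_characters_u2} when $k$ is even, the unique minimal bipartition of $\Theta(\mu'(0),\mu'(1))$ is $((l-l')\cup\mu'(1),\mu'(0))$, resp.\ $(\mu'(1),(l-l')\cup\mu'(0))$. It therefore remains to check that the partition with $2$-core $\tau_k$ and that prescribed $2$-quotient is exactly $(m-m')\cup\mu'$; uniqueness is already part of those two theorems.

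The next step is the $\beta$-number computation. From $l=\tfrac12(m-k(k+1)/2)$ and $l'=\tfrac12(m'-(k+1)(k+2)/2)$ one extracts the identity $m-m'=2(l-l')-(k+1)$. Fix an odd integer $t$ at least as large as the number of parts of $\mu'$. Since we are in the stable range, $m-m'$ exceeds $\mu'_1$, so adjoining the part $m-m'$ to $\mu'$ translates into
\begin{equation*}
\beta^{t+2}_{(m-m')\cup\mu'}=\{\,m-m'+t+1\,\}\ \sqcup\ \{\,\beta+1:\beta\in\beta^t_{\mu'}\,\}\ \sqcup\ \{0\}.
\end{equation*}
The shift $\beta\mapsto\beta+1$ interchanges the even and the odd sub-$\beta$-sets, hence (after padding by zeros) it interchanges the two $2$-quotients of $\mu'$; the adjoined $0$ only appends a trailing zero, while $m-m'+t+1$ adds a single dominant $\beta$-number. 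A parity check — $m-m'+t+1$ is even exactly when $k$ is odd — then shows that the even part of $\beta^{t+2}_{(m-m')\cup\mu'}$ produces $\mu'(1)$ with one dominant part prepended and its odd part produces $\mu'(0)$ when $k$ is odd, the two roles being exchanged when $k$ is even. In either case the resulting bipartition has exactly the shape demanded above.

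It then remains to compute the size of that dominant part, which equals $(m-m'+t+1)/2-|\beta^t_{\mu'}(1)|-1$ when $k$ is odd and $(m-m'+t)/2-|\beta^t_{\mu'}(0)|$ when $k$ is even. By Proposition \ref{betaset_parity_invariance} the cardinalities $|\beta^t_{\mu'}(0)|$ and $|\beta^t_{\mu'}(1)|$ coincide with those for the $2$-core $\tau_{k+1}$, which are given explicitly by Remark \ref{OddandEvenElementsinBetaSet}; substituting these together with $m-m'=2(l-l')-(k+1)$ makes the dominant part collapse to $l-l'$ in both cases. The same count, now compared with the number of even $\beta$-numbers of $\tau_k$, shows that $(m-m')\cup\mu'$ has $2$-core $\tau_k$, so it genuinely lies in $\mathscr{R}(\U_m(q))_k$ and corresponds to the asserted minimal bipartition. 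Therefore $\varepsilon_{(m-m')\cup\mu'}R_{(m-m')\cup\mu'}$ is the minimal representation attached to $\varepsilon_{\mu'}R_{\mu'}$.

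I expect the bookkeeping in the last two paragraphs to be the main obstacle: one must track carefully how many $\beta$-numbers are used ($t$ versus $t+2$) and the parity shifts this entails, and exploit the fact that passing from the $2$-core $\tau_{k+1}$ to $\tau_k$ changes the balance between even and odd $\beta$-numbers — which is precisely why the two $2$-quotients are swapped and why the cases $k$ odd and $k$ even must be handled separately.
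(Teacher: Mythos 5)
Your proposal is correct and takes essentially the same route as the paper: it reduces the statement, via Proposition \ref{Parametrization_HCseries_Unitary} and Theorems \ref{order_phantom_characters_u1} and \ref{order_phantom_characters_u2}, to a $\beta$-number computation split on the parity of $k$, using the identity $m-m'=2(l-l')-(k+1)$ together with Remark \ref{OddandEvenElementsinBetaSet} and Proposition \ref{betaset_parity_invariance}. The only difference is the direction of the final computation — you compute the $2$-quotient and $2$-core of $(m-m')\cup\mu'$ and match them with the minimal bipartition, whereas the paper computes the $\beta$-set of the partition attached to the minimal bipartition and recognizes it as that of $(m-m')\cup\mu'$ — which is the same calculation read backwards.
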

\begin{proof}
Proposition \ref{Parametrization_HCseries_Unitary} tells us that the partition $\mu'$ of $m'$ has $\tau_{\theta(k)}$ as its $2$-core. Let $(\xi',\eta')$ be its $2$-quotient of parameter 1. Adding zeroes if necessary, we can suppose that the number of parts $t'_0$ of $\xi'$, and $t'_1$ of $\eta'$, have an odd sum $t'$, so that the $t$-set of $\beta$-numbers of $\mu'$ has an odd cardinal. An easy calculation shows that this set is : 
$$
\beta'=\{2(\xi'_1+t'_0-1),\ldots,2\xi'_{t'_0},2(\eta'_1+t'_1)-1,\ldots,2\eta'_{t'_1}+1\},
$$  
where the elements are not necessarily in decreasing order. 

We take $t=t'+2$ (odd) for computing the $t$-set of $\beta$-numbers of the partition $\mu_{\mathrm{min}}$ corresponding to the minimal bipartition. The hypothesis $\theta(k)=k+1$, together with Remark \ref{OddandEvenElementsinBetaSet} imply that :

(1) If $k$ is even, then $t_0=t'_1+1$, and $t_1=t'_0+1$. Moreover, in this case the minimal bipartition is $(\eta',(l-l')\cup\xi')$. Therefore, the $t$-set of $\beta$-numbers of $\mu_{\mathrm{min}}$ is
$$
\beta_{\mathrm{min}}=\{2(l-l'+t'_0)+1\}\cup\{x+1|x\in\beta'\}\cup\{0\}.
$$ 

(2) If $k$ is odd, then $t_0=t'_1+2$, and $t_1=t'_0$. In this case the minimal bipartition corresponds to $((l-l')\cup\eta',\xi')$, so the $t$-set of $\beta$-numbers of $\mu_{\mathrm{min}}$ is
$$
\beta_{\mathrm{min}}=\{2(l'-l+t_1+1)\}\cup\{x+1|x\in\beta'\}\cup\{0\}.
$$ 
Recalling that $l\geq 2l'$, a final simple calculation shows that $\mu_{\mathrm{min}}$ equals $(m-m')\cup\mu'$.
\end{proof}

\bibliography{Paper1bib}
\bibliographystyle{acm}
\end{document}